\definecolor{deepjunglegreen}{rgb}{0.0, 0.29, 0.29}
\definecolor{darkspringgreen}{rgb}{0.09, 0.45, 0.27}
\definecolor{Red}{rgb}{0.7, 0,0}
\pretocmd\section{\Needspace*{4\baselineskip}}{}{}
\newtheorem{thm}{Theorem}[subsection]
\newtheorem{cor}[thm]{Corollary}
\newtheorem{lem}[thm]{Lemma}
\newtheorem{prop}[thm]{Proposition}
\newtheorem{conj}[thm]{Conjecture}
\theoremstyle{definition}
\theoremstyle{remark}
\newtheorem{rem}[thm]{Remark}
\newcommand{\nc}{\newcommand}
\nc{\renc}{\renewcommand} \nc{\ssec}{\subsection}
\nc{\sssec}{\subsubsection}
\nc{\on}{\operatorname} \nc{\wh}{\widehat}
\nc\ol{\overline} \nc\ul{\underline} \nc\wt{\widetilde}
\newcommand{\red}[1]{{\color{Red}#1}}
\nc{\BA}{{\mathbb{A}}} \nc{\BC}{{\mathbb{C}}} \nc{\BF}{{\mathbb{F}}}
\nc{\BD}{{\mathbb{D}}} \nc{\BG}{{\mathbb{G}}} \nc{\BQ}{{\mathbb{Q}}}
\nc{\BM}{{\mathbb{M}}} \nc{\BN}{{\mathbb{N}}} \nc{\BO}{{\mathbb{O}}}
\nc{\BP}{{\mathbb{P}}} \nc{\BR}{{\mathbb{R}}}
\nc{\BZ}{{\mathbb{Z}}} \nc{\BS}{{\mathbb{S}}} \nc{\BW}{{\mathbb{W}}}
\nc{\CA}{{\mathcal{A}}} \nc{\CB}{{\mathcal{B}}} \nc{\CalC}{{\mathcal{C}}} \nc{\CalD}{{\mathcal{D}}}
\nc{\CE}{{\mathcal{E}}} \nc{\CF}{{\mathcal{F}}}
\nc{\CG}{{\mathcal{G}}} \nc{\CH}{{\mathcal{H}}}
\nc{\CI}{{\mathcal{I}}} \nc{\CK}{{\mathcal{K}}} \nc{\CL}{{\mathcal{L}}}
\nc{\CM}{{\mathcal{M}}} \nc{\CN}{{\mathcal{N}}}
\nc{\CO}{{\mathcal{O}}} \nc{\CP}{{\mathcal{P}}}
\nc{\CQ}{{\mathcal{Q}}} \nc{\CR}{{\mathcal{R}}}
\nc{\CS}{{\mathcal{S}}} \nc{\CT}{{\mathcal{T}}}
\nc{\CU}{{\mathcal{U}}} \nc{\CV}{{\mathcal{V}}}  \nc{\CY}{{\mathcal Y}}
\nc{\CW}{{\mathcal{W}}} \nc{\CZ}{{\mathcal{Z}}}
\nc{\cM}{{\check{\mathcal M}}{}} \nc{\csM}{{\check{\mathcal A}}{}}
\nc{\oM}{{\overset{\circ}{\mathcal M}}{}}
\nc{\obM}{{\overset{\circ}{\mathbf M}}{}}
\nc{\oCA}{{\overset{\circ}{\mathcal A}}{}}
\nc{\obA}{{\overset{\circ}{\mathbf A}}{}}
\nc{\ooM}{{\overset{\circ}{M}}{}}
\nc{\osM}{{\overset{\circ}{\mathsf M}}{}}
\nc{\vM}{{\overset{\bullet}{\mathcal M}}{}}
\nc{\nM}{{\underset{\bullet}{\mathcal M}}{}}
\nc{\oD}{{\overset{\circ}{\mathcal D}}{}}
\nc{\obD}{{\overset{\circ}{\mathbf D}}{}}
\nc{\oA}{{\overset{\circ}{\mathbb A}}{}}
\nc{\op}{{\overset{\bullet}{\mathbf p}}{}}
\nc{\cp}{{\overset{\circ}{\mathbf p}}{}}
\nc{\oU}{{\overset{\bullet}{\mathcal U}}{}}
\nc{\ofZ}{{\overset{\circ}{\mathfrak Z}}{}}
\nc{\fa}{{\mathfrak{a}}} \nc{\fb}{{\mathfrak{b}}}
\nc{\fd}{{\mathfrak{d}}} \nc{\fe}{{\mathfrak{e}}} \nc{\ff}{{\mathfrak{f}}}
\nc{\fg}{{\mathfrak{g}}} \nc{\fgl}{{\mathfrak{gl}}}
\nc{\fh}{{\mathfrak{h}}} \nc{\fri}{{\mathfrak{i}}}
\nc{\fj}{{\mathfrak{j}}} \nc{\fk}{{\mathfrak{k}}} \nc{\fl}{{\mathfrak{l}}}
\nc{\fm}{{\mathfrak{m}}} \nc{\fn}{{\mathfrak{n}}}
\nc{\ft}{{\mathfrak{t}}} \nc{\fu}{{\mathfrak{u}}} \nc{\fv}{{\mathfrak{v}}}
\nc{\fw}{{\mathfrak{w}}} \nc{\fz}{{\mathfrak{z}}}
\nc{\fp}{{\mathfrak{p}}} \nc{\fq}{{\mathfrak{q}}} \nc{\frr}{{\mathfrak{r}}}
\nc{\fs}{{\mathfrak{s}}} \nc{\fsl}{{\mathfrak{sl}}}
\nc{\fso}{{\mathfrak{so}}} \nc{\fsp}{{\mathfrak{sp}}} \nc{\osp}{{\mathfrak{osp}}}
\nc{\hsl}{{\widehat{\mathfrak{sl}}}}
\nc{\hgl}{{\widehat{\mathfrak{gl}}}}
\nc{\hg}{{\widehat{\mathfrak{g}}}}
\nc{\chg}{{\widehat{\mathfrak{g}}}{}^\vee}
\nc{\hn}{{\widehat{\mathfrak{n}}}}
\nc{\chn}{{\widehat{\mathfrak{n}}}{}^\vee}
\nc{\fA}{{\mathfrak{A}}} \nc{\fB}{{\mathfrak{B}}} \nc{\fC}{{\mathfrak{C}}}
\nc{\fD}{{\mathfrak{D}}} \nc{\fE}{{\mathfrak{E}}}
\nc{\fF}{{\mathfrak{F}}} \nc{\fG}{{\mathfrak{G}}} \nc{\fH}{{\mathfrak{H}}}
\nc{\fI}{{\mathfrak{I}}} \nc{\fJ}{{\mathfrak{J}}}
\nc{\fK}{{\mathfrak{K}}} \nc{\fL}{{\mathfrak{L}}}
\nc{\fM}{{\mathfrak{M}}} \nc{\fN}{{\mathfrak{N}}}
\nc{\frP}{{\mathfrak{P}}} \nc{\fQ}{{\mathfrak{Q}}}
\nc{\fS}{{\mathfrak{S}}} \nc{\fT}{{\mathfrak{T}}} \nc{\fU}{{\mathfrak{U}}}
\nc{\fV}{{\mathfrak{V}}} \nc{\fW}{{\mathfrak{W}}}
\nc{\fX}{{\mathfrak{X}}} \nc{\fY}{{\mathfrak{Y}}}
\nc{\fZ}{{\mathfrak{Z}}}
\nc{\ba}{{\mathbf{a}}}
\nc{\bb}{{\mathbf{b}}} \nc{\bc}{{\mathbf{c}}} \nc{\be}{{\mathbf{e}}}
\nc{\bg}{{\mathbf{g}}} \nc{\bj}{{\mathbf{j}}} \nc{\bm}{{\mathbf{m}}}
\nc{\bn}{{\mathbf{n}}} \nc{\bp}{{\mathbf{p}}}
\nc{\bq}{{\mathbf{q}}} \nc{\br}{{\mathbf{r}}} \nc{\bt}{{\mathbf{t}}}
\nc{\bfu}{{\mathbf{u}}} \nc{\bv}{{\mathbf{v}}}
\nc{\bx}{{\mathbf{x}}} \nc{\by}{{\mathbf{y}}} \nc{\bz}{{\mathbf{z}}}
\nc{\bw}{{\mathbf{w}}} \nc{\bA}{{\mathbf{A}}}
\nc{\bB}{{\mathbf{B}}} \nc{\bC}{{\mathbf{C}}}
\nc{\bD}{{\mathbf{D}}} \nc{\bF}{{\mathbf{F}}} \nc{\bG}{{\mathbf{G}}}
\nc{\bH}{{\mathbf{H}}} \nc{\bI}{{\mathbf{I}}} \nc{\bJ}{{\mathbf{J}}}
\nc{\bK}{{\mathbf{K}}} \nc{\bM}{{\mathbf{M}}} \nc{\bN}{{\mathbf{N}}}
\nc{\bO}{{\mathbf{O}}} \nc{\bS}{{\mathbf{S}}} \nc{\bT}{{\mathbf{T}}}
\nc{\bU}{{\mathbf{U}}} \nc{\bV}{{\mathbf{V}}} \nc{\bW}{{\mathbf{W}}}
\nc{\bX}{{\mathbf{X}}}
\nc{\bY}{{\mathbf{Y}}} \nc{\bP}{{\mathbf{P}}}
\nc{\bZ}{{\mathbf{Z}}} \nc{\bh}{{\mathbf{h}}}
\nc{\sA}{{\mathsf{A}}} \nc{\sB}{{\mathsf{B}}}
\nc{\sC}{{\mathsf{C}}} \nc{\sD}{{\mathsf{D}}}
\nc{\sE}{{\mathsf{E}}} \nc{\sF}{{\mathsf{F}}} \nc{\sG}{{\mathsf{G}}} \nc{\sH}{{\mathsf{H}}}
\nc{\sI}{{\mathsf{I}}} \nc{\sK}{{\mathsf{K}}} \nc{\sL}{{\mathsf{L}}}
\nc{\sfm}{{\mathsf{m}}} \nc{\sM}{{\mathsf{M}}} \nc{\sN}{{\mathsf{N}}}
\nc{\sO}{{\mathsf{O}}} \nc{\sQ}{{\mathsf{Q}}} \nc{\sP}{{\mathsf{P}}}
\nc{\sT}{{\mathsf{T}}} \nc{\sZ}{{\mathsf{Z}}}
\nc{\sV}{{\mathsf{V}}} \nc{\sW}{{\mathsf{W}}}
\nc{\sfp}{{\mathsf{p}}} \nc{\sq}{{\mathsf{q}}} \nc{\sr}{{\mathsf{r}}}
\nc{\sfs}{{\mathsf{s}}} \nc{\st}{{\mathsf{t}}} \nc{\sfb}{{\mathsf{b}}}
\nc{\sfc}{{\mathsf{c}}} \nc{\sd}{{\mathsf{d}}}
\nc{\sz}{{\mathsf{z}}}
\nc{\tA}{{\widetilde{\mathbf{A}}}}
\nc{\tB}{{\widetilde{\mathcal{B}}}}
\nc{\tg}{{\widetilde{\mathfrak{g}}}} \nc{\tG}{{\widetilde{G}}}
\nc{\TM}{{\widetilde{\mathbb{M}}}{}}
\nc{\tO}{{\widetilde{\mathsf{O}}}{}}
\nc{\tU}{{\widetilde{\mathfrak{U}}}{}} \nc{\TZ}{{\tilde{Z}}}
\nc{\tx}{{\tilde{x}}} \nc{\tbv}{{\tilde{\bv}}}
\nc{\tfP}{{\widetilde{\mathfrak{P}}}{}} \nc{\tz}{{\tilde{\zeta}}}
\nc{\tmu}{{\tilde{\mu}}}
\nc{\urho}{\underline{\rho}} \nc{\uB}{\underline{B}}
\nc{\uC}{{\underline{\mathbb{C}}}} \nc{\ui}{\underline{i}}
\nc{\uj}{\underline{j}} \nc{\ofP}{{\overline{\mathfrak{P}}}}
\nc{\oB}{{\overline{\mathcal{B}}}}
\nc{\og}{{\overline{\mathfrak{g}}}} \nc{\oI}{{\overline{I}}}
\nc{\eps}{\varepsilon} \nc{\hrho}{{\hat{\rho}}} \nc{\balpha}{{\boldsymbol{\alpha}}}
\nc{\blambda}{{\boldsymbol{\lambda}}} \nc{\bmu}{{\boldsymbol{\mu}}} \nc{\bnu}{{\boldsymbol{\nu}}}
\nc{\btheta}{{\boldsymbol{\theta}}} \nc{\bzeta}{{\boldsymbol{\zeta}}} \nc{\bta}{{\boldsymbol{\eta}}}
\nc{\bbeta}{{\boldsymbol{\beta}}} \nc{\bkappa}{{\boldsymbol{\kappa}}} \nc{\bomega}{{\boldsymbol{\omega}}}
\nc{\one}{{\mathbf{1}}} \nc{\two}{{\mathbf{t}}}
\DeclareMathOperator
\DMO\Sym{Sym}
\nc{\Tot}{{\mathop{\operatorname{\rm Tot}}}}
\nc{\Spec}{\mathop{\operatorname{\rm Spec}}}
\nc{\Ker}{{\mathop{\operatorname{\rm Ker}}}}
\nc{\Isom}{{\mathop{\operatorname{\rm Isom}}}}
\nc{\Hilb}{{\mathop{\operatorname{\rm Hilb}}}}
\nc{\deeq}{{\mathop{\operatorname{\rm deeq}}}}
\nc{\End}{{\mathop{\operatorname{\rm End}}}}
\nc{\Ext}{{\mathop{\operatorname{\rm Ext}}}}
\nc{\Hom}{{\mathop{\operatorname{\rm Hom}}}}
\nc{\CHom}{{\mathop{\operatorname{{\mathcal{H}}\it om}}}}
\nc{\GL}{{\mathop{\operatorname{\rm GL}}}}
\nc{\PGL}{{\mathop{\operatorname{\rm PGL}}}}
\nc{\SL}{{\mathop{\operatorname{\rm SL}}}}
\nc{\SO}{{\mathop{\operatorname{\rm SO}}}}
\nc{\Sp}{{\mathop{\operatorname{\rm Sp}}}}
\nc{\OSp}{{\mathop{\operatorname{\rm SOSp}}}}
\nc{\gr}{{\mathop{\operatorname{\rm gr}}}}
\nc{\Id}{{\mathop{\operatorname{\rm Id}}}}
\nc{\perf}{{\mathop{\operatorname{\rm perf}}}}
\nc{\defi}{{\mathop{\operatorname{\rm def}}}}
\nc{\length}{{\mathop{\operatorname{\rm length}}}}
\nc{\supp}{{\mathop{\operatorname{\rm supp}}}}
\nc{\HC}{{\mathcal H}{\mathcal C}}
\nc{\pr}{{\operatorname{pr}}}
\nc{\Cliff}{{\mathsf{Cliff}}}
\nc{\loc}{{\operatorname{loc}}} \nc{\lc}{{\operatorname{lc}}}
\nc{\Fl}{{\mathbf{Fl}}} \nc{\Ffl}{{\mathcal{F}\ell}}
\nc{\Fib}{{\mathsf{Fib}}}
\nc{\Coh}{{\mathsf{Coh}}} \nc{\FCoh}{{\mathsf{FCoh}}}
\nc{\Perf}{{\mathsf{Perf}}}
\nc{\wtimes}{\mathbin{\widetilde\times}}
\nc{\reg}{{\text{\rm reg}}} \nc{\ren}{{\text{\rm ren}}}
\nc{\self}{{\text{\rm self}}}
\nc{\gvee}{{\mathfrak g}^{\!\scriptscriptstyle\vee}}
\nc{\tvee}{{\mathfrak t}^{\!\scriptscriptstyle\vee}}
\nc{\nvee}{{\mathfrak n}^{\!\scriptscriptstyle\vee}}
\nc{\bvee}{{\mathfrak b}^{\!\scriptscriptstyle\vee}}
       \nc{\rhovee}{\rho^{\!\scriptscriptstyle\vee}}
\nc{\cplus}{{\mathbf{C}_+}} \nc{\cminus}{{\mathbf{C}_-}}
\nc{\cthree}{{\mathbf{C}_*}} \nc{\Qbar}{{\bar{Q}}}
\newcommand\iso{\mathbin{\vphantom{j^{X^2}}\smash{\overset{\sim}{\vphantom{\rule{0pt}{0.20em}}\smash{\longrightarrow}}}}}
\nc{\Gtimes}{\vphantom{j^{X^2}}\smash{\overset{G}{\vphantom{\rule{0pt}{0.30em}}\smash{\times}}}}
\nc{\sGtimes}{\vphantom{j^{X^2}}\smash{\overset{\mathsf G}{\vphantom{\rule{0pt}{0.30em}}\smash{\times}}}}
\newcommand{\oLambda}{\vphantom{j^{X^2}}\smash{\overset{\circ}{\vphantom{\rule{0pt}{0.55em}}\smash{\Lambda}}}}
\nc{\bOmega}{{\overline{\Omega}}}
\nc{\seq}[1]{\stackrel{#1}{\sim}}
\nc{\aff}{{\operatorname{aff}}}
\nc{\fin}{{\operatorname{fin}}}
\nc{\mir}{{\operatorname{mir}}}
\nc{\triv}{{\operatorname{triv}}}
\nc{\ext}{{\operatorname{ext}}}
\nc{\righ}{{\operatorname{right}}}
\nc{\lef}{{\operatorname{left}}}
\nc{\forg}{{\operatorname{forg}}}
\nc{\fid}{{\operatorname{fd}}}
\nc{\odd}{{\operatorname{odd}}}
\nc{\even}{{\operatorname{even}}}
\nc{\modu}{{\operatorname{-mod}}}
\nc{\Gr}{{\operatorname{Gr}}}
\nc{\FT}{{\operatorname{FT}}}
\nc{\Mat}{{\operatorname{Mat}}}
\nc{\MSt}{{\operatorname{MSt}}}
\nc{\sph}{{\operatorname{sph}}}
\nc{\GR}{{\mathbf{Gr}}}
\nc{\Perv}{{\operatorname{Perv}}}
\nc{\Rep}{{\operatorname{Rep}}}
\nc{\Ind}{{\operatorname{Ind}}}
\nc{\IC}{{\operatorname{IC}}}
\nc{\Bun}{{\operatorname{Bun}}}
\nc{\Proj}{{\operatorname{Proj}}}
\nc{\Stab}{{\operatorname{Stab}}}
\nc{\pt}{{\operatorname{pt}}}
\nc{\bfmu}{{\boldsymbol{\mu}}}
\nc{\bfomega}{{\boldsymbol{\omega}}}
\nc{\calM}{\mathcal M}
\nc{\calA}{\mathcal A}
\nc{\calO}{\mathcal O}
\nc{\CC}{\mathcal C}
\nc{\calN}{\mathcal N}
\nc{\grg}{\mathfrak g}
\nc{\dslash}{/\!\!/}
\nc{\tslash}{/\!\!/\!\!/}
\nc\grt{\mathfrak t}
\nc\bfM{\mathbf M}
\nc\bfN{\mathbf N}
\nc\Sig{\Sigma}
\nc\ZZ{\mathbb{Z}}
\nc\calC{\mathcal C}
\nc\calF{\mathcal F}
\nc\calX{\mathcal X}
\nc\calY{\mathcal Y}
\nc\QCoh{\operatorname{QCoh}}
\nc\IndCoh{\operatorname{IndCoh}}
\nc\Maps{\operatorname{Maps}}
\nc\Dmod{D-\operatorname{mod}}
\newcommand\Hecke{\operatorname{Hecke}}
\nc{\calD}{\mathcal D}
\nc\bfO{\mathbf O}
\nc\GG{\mathbb G}
\nc\calK{\mathcal K}
\nc{\calG}{\mathcal G}
\nc\RHom{\operatorname{RHom}}
\nc\Res{\operatorname{Res}}
\nc\Av{\operatorname{Av}}
\nc{\RH}{{\operatorname{RH}}}
\nc{\RT}{{\operatorname{RT}}}
\nc{\DR}{{\operatorname{DR}}}
\nc\grs{\mathfrak s}
\nc{\tilX}{\widetilde X}
\nc\calB{\mathcal B}
\nc\calS{\mathcal S}
\nc\calT{\mathcal T}
\nc\calZ{\mathcal Z}
\nc\LS{\operatorname{LocSys}}
\nc\bfL{\on{\mathbf L}}
\newcommand*\circled[1]
\newcommand{\raisemath}[1]{\mathpalette{\raisem@th{#1}}}
\newcommand{\raisem@th}[3]{\raisebox{#1}{$#2#3$}}
\nc{\binlim}[2][]{\def\@tempa{#1}\@ifnextchar^{\@binlim{#2}}{\@binlim{#2}^{}}}
\def\@binlim#1^#2{\mathbin{\@ifempty{#2}{\mathop{#1}}{\mathop{#1}\@xp\displaylimits\@tempa^{#2}}}}
\nc\cX{{\mathcal X}}
\nc\Gm{{\mathbb G_m}}
\renc\Hecke{\mathit{\CH\kern-.2ex ecke}}
\nc\Fq{\mathbb F_q}
\nc\bGO{{\bG_\bO}}
\nc\opp{{\on{op}}}
\nc\tbx{\binlim{\widetilde\boxtimes{}}}
\nc\phitau{\varphi\tau}
\newenvironment{i-ii-iii}{%
\begin{enumerate}
}%
{\end{enumerate}}
\nc\ceil[1]{\lceil#1\rceil}  \nc\floor[1]{\lfloor#1\rfloor}
\nc\Lie{\on{Lie}}
\nc\sS{{\mathsf S}}
\nc\vvv{\ensuremath{\red\surd}}
\def\arxiv#1{\href{http://arxiv.org/abs/#1}{\tt arXiv:#1}} \let\arXiv\arxiv
\nc\kap{\kappa}
\nc\gra{\mathfrak a}
\nc\gl{\mathfrak{gl}}
\nc\sTr{\operatorname{sTr}}
\nc\hatG{\widehat{G}}
\nc\calL{\mathcal L}
\nc\Whit{\operatorname{Whit}}
\nc\KL{\operatorname{KL}}
\renewcommand{\subsection}{\@startsection{subsection}{2}{0pt}{-3ex
plus -1ex minus -0.2ex}{-2mm plus -0pt minus
-2pt}{\normalfont\bfseries}} \makeatother
\numberwithin{equation}{subsection}
\nc\mto{\mapsto }
\nc\en{\enspace }
\begin{document}

\author[A.Braverman]{Alexander Braverman}
\address{Department of Mathematics, University of Toronto and Perimeter Institute
of Theoretical Physics, Waterloo, Ontario, Canada, N2L 2Y5}
\email{braval@math.toronto.edu}

\author[M.Finkelberg]{Michael Finkelberg}
\address{Einstein Institute of Mathematics, The Hebrew University of Jerusalem,
  Edmond J. Safra Campus, Giv’at Ram, Jerusalem, 91904, Israel;
\newline  National Research University Higher School of Economics;
\newline Skolkovo Institute of Science and Technology}
\email{fnklberg@gmail.com}


\author[R.Travkin]{Roman Travkin}
\address{Skolkovo Institute of Science and Technology, Moscow, Russia}
\email{roman.travkin2012@gmail.com}

\title
{Orthosymplectic Satake equivalence, II}
\dedicatory{To Mikhail Kapranov on his 60th birthday}




\begin{abstract}
  This is a companion paper of~\cite{bfgt,bft}.
  We prove an equivalence relating representations of a degenerate orthosymplectic supergroup
  with the category of twisted $\Sp(2n,\BC[\![t]\!])$-equivariant $D$-modules on the so called mirabolic
  affine Grassmannian of $\Sp(2n)$. We also discuss (conjectural) extension of this equivalence to
  the case of quantum supergroups and to some exceptional supergroups.
\end{abstract}

\maketitle

\tableofcontents

\section{Introduction}
\label{intro}

\subsection{Orthosymplectic Satake equivalence}
This paper is a sequel to~\cite{bft} where a conjectural version of the Satake equivalence for
orthosymplectic groups $\on{SOSp}(2k|2n)$ was studied. The main goal of the present paper is
to extend this study to the case of $\on{SOSp}(2k+1|2n)$. The `central' case is $k=n$, and this
is the only case where we prove an equivalence of categories proposed by D.~Gaiotto.

Namely, we construct a geometric realization of the category $\Rep(\ul{\on{SOSp}}(2n+1|2n))$
of finite-dimensional representations of the degenerate supergroup $\ul{\on{SOSp}}(2n+1|2n)$.
Recall that the even part of $\ul{\on{SOSp}}(2n+1|2n)$ is $\SO(2n+1)\times\Sp(2n)$, and its Lie
superalgebra $\ul{\osp}(2n+1|2n)$ has trivial supercommutator of two odd elements, while the
supercommutator of an even element with another element is the same as in $\osp(2n+1|2n)$.

This geometric realization makes use of the `mirabolic' affine Grassmannian $V_\bF\times\Gr_G$
of the symplectic group $G=\Sp(V)$, where $V$ is a $2n$-dimensional symplectic complex vector space.
The symplectic form on $V$ induces a (complex-valued) symplectic form on the Tate vector space
$V_\bF$, and it gives rise to the Weyl algebra $\CW$ of $V_\bF$. The symplectic mirabolic category
is the category of $G_\bO$-equivariant objects in the tensor product of $D$-modules on $\Gr_G$
and $\CW$-modules. The category $\CW\modu$ is equipped with a {\em twisted} action of
$\mathrm{D}\modu_{-1/2}(G_\bF)$ at the level $-1/2$. Hence there are two equivalent incarnations
of the symplectic mirabolic category. First, $\CalD\CW\modu^{G_\bO}$ are the $G_\bO$-equivariant
objects of $\mathrm{D}\modu_{1/2}(\Gr_G)\otimes\CW\modu$ with respect to the {\em untwisted}
strong action of $G_\bF$. Second, $\CalD'\CW\modu^{G'_\bO}$ are the $G_\bO$-equivariant
objects of $\mathrm{D}\modu(\Gr_G)\otimes\CW\modu$ with respect to the $-1/2$-{\em twisted}
strong action of $G_\bF$.

Our construction of the equivalence $\Rep(\ul{\on{SOSp}}(2n+1|2n))\iso\CalD\CW\modu^{G_\bO}$
follows the pattern of~\cite{bft}. Namely, we first construct an equivalence from
$D^{\SO(V_0)\times\Sp(V)}_{\on{perf}}(\Sym^\bullet(\Pi(V_0\otimes V)[-1]))$ to a derived version of
$\CalD\CW\modu^{G_\bO}$. Here $V_0$ is a $2n+1$-dimensional complex vector spaces equipped with
a nondegenerate symmetric bilinear form. Then we precompose this equivalence with the Koszul
equivalence $D^b\Rep(\ul{\on{SOSp}}(2n+1|2n))\iso
D^{\SO(V_0)\times\Sp(V)}_{\on{perf}}(\Sym^\bullet(\Pi(V_0\otimes V)[-1]))$. Finally, we check that the
composed equivalence respects the natural $t$-structures.

Compared to~\cite{bft} we have to overcome two new difficulties. First, $\CW\modu$ is equivalent
to the category of $D$-modules on $V_\bF/V_\bO$, but the set of $G_\bO$-orbits in
$(V_\bF/V_\bO)\times\Gr_G$ is uncountable, and we have to classify the discrete family of
{\em relevant} orbits which do carry $G_\bO$-equivariant $D$-modules. It turns out that this
family is naturally numbered by the dominant weights of $\SO(V_0)\times\Sp(V)$.
Second, the computation of equivariant Ext's in~\cite{bft} was reduced to the calculations with
equivariant cohomology, but in the present case, due to the twisting, the equivariant (De Rham)
cohomology simply vanish, and we have to resort to the endoscopic arguments of~\cite{dlyz}.

Otherwise, the arguments are quite parallel to those of~\cite{bft}, and they are just briefly
indicated after introducing an appropriate setup.

\subsection{Further generalizations: general orthosymplectic case, quantum supergroups and
  exceptional supergroups}
The above results only involve the category of representations of the degenerate supergroup
$\ul{\on{SOSp}}(2n+1|2n)$.
Following D.~Gaiotto, we formulate in~\S\ref{osp2n} similar conjectures relating
$\Rep(\ul{\on{SOSp}}(2k+1|2n))$ and $\Rep(\ul{\on{SOSp}}(2n+1|2k))$ (for $k\leq n$) with certain
equivariant objects of $D\modu(\Gr_G)\otimes\CW(\bF^{2k})\modu$ (the Weyl algebra of a symplectic
Tate space $\bF^{2k}$). The equivariance is taken with respect to the semidirect product
$\Sp(2k,\bO)\ltimes\on{U}_k(\bF)$ for a certain unipotent subgroup $\on{U}_k\subset\Sp(2n)$
related to a `hook' nilpotent $e\in\fsp(2n)$ of Jordan type $(2n-2k,1^{2k})$.

We also formulate a quantum version of these conjectures for representations of quantum supergroups
$\Rep_q(\on{SOSp}(2k+1|2n))$ and $\Rep_q(\on{SOSp}(2n+1|2k))$. In~\S\ref{glkn} we reformulate the
Gaiotto conjectures~\cite[\S2.6]{bfgt} for representations of quantum supergroups $\Rep_q(\GL(K|N))$
in a similar vein. Namely, we replace a Whittaker-type equivariance condition with respect to
a certain unipotent group by a plain equivariance condition (no character) with respect to a
larger unipotent group, at the expense of adding a factor of $\CW(\bF^{2K})\modu$, cf.~\cite[\S5.3]{ty}.

Finally, in~\S\S\ref{f4},\ref{g3} we formulate similar conjectures for representations of the
exceptional quantum supergroups $\Rep_q(\on{F}(4))$ and $\Rep_q(\on{G}(3))$.

We refer the reader to \cite[\S2]{bfgt} where the meaning of very similar statements (for the
supergroup $\GL(M|N)$) is explained from the point of view of local geometric Langlands correspondence.

\subsection{Acknowledgments}
We are grateful to G.~Dhillon, A.~Elashvili, D.~Gaiotto, A.~Hanany, M.~Jibladze, D.~Leites, I.~Motorin,
H.~Nakajima, V.~Ostrik, S.~Raskin, V.~Serganova, D.~Timashev, A.~Tsymbaliuk, I.~Ukraintsev and R.~Yang
for very useful discussions.

A.B.\ was partially supported by NSERC. The research of M.F.~was supported by the Israel Science Foundation
(grant No.~994/24).

\section{A coherent realization of $\CalD\CW\modu^{\Sp(2n,\bO),\lc}$}

\subsection{Symplectic mirabolic category}
\subsubsection{Weyl algebra}
\label{weyl}
We fix a $2n$-dimensional complex vector space $V$ with symplectic form $\langle\, ,\rangle$.
Let $G=\Sp(2n)=\Sp(V)$, and $\fg=\fsp(2n)=\fsp(V)$. The symplecic form on $V$ extends to the same
named $\BC$-valued symplectic form on
$\bV=V_\bF\colon \langle f,g\rangle=\on{Res}\,\langle f,g\rangle_\bF dt$. We denote by $\CW$ the
completion of the Weyl algebra of $(\bV,\langle\, ,\rangle)$ with respect to the left ideals
generated by the compact subspaces of $\bV$.

We consider the dg-category $\CW\modu$ of discrete $\CW$-modules. More concretely, we idenfity
$\CW$ with the ring of differential operators on a Lagrangian discrete lattice $L\subset\bV$,
e.g.\ $L=t^{-1}V_{\BC[t^{-1}]}$. Then $\CW\modu$ is the inverse limit of $\mathrm{D}\modu(U)$ over
finite dimensional subspaces $U\subset L$ with respect to the functors $i^!_{U\hookrightarrow U'}$.
Equivalently, $\CW\modu$ is the colimit of $\mathrm{D}\modu(U)$ with respect to the functors
$i_{U\hookrightarrow U',*}$.

There is a twisted action $\mathrm{D}\modu_{-1/2}(G_\bF)\circlearrowright\CW\modu$~\cite[\S10]{r}.
Here~$-1/2$ stands for the~$-1/2$-multiple of the level of $G_\bF$ corresponding to the trace
form of $V$ on $\fg$.

\subsubsection{Satake equivalence}
\label{satake}
We consider the dg-category $\mathrm{D}\modu_{1/2}^{G_\bO,\lc}(\Gr_G)\subset\mathrm{D}\modu_{1/2}^{G_\bO}(\Gr_G)$ of
{\em locally compact} $G_\bO$-equivariant $D$-modules on the affine Grassmannian $\Gr_G$
twisted by the square root of the very ample determinant line bundle $\CalD$. Here `locally compact'
means compact when regarded as a plain $D$-module on $\Gr_G$. The {\em twisted Satake equivalence}
of~\cite{dlyz} is a monoidal equivalence
\[\bbeta^\fg\colon D^G_\perf(\Sym^\bullet(\fg[-2]))\iso\mathrm{D}\modu_{1/2}^{G_\bO,\lc}(\Gr_G)\]
from the dg-category of $G$-equivariant perfect dg-modules over the dg-algebra $\Sym^\bullet(\fg[-2])$
equipped with trivial differential.
It extends to the same named monoidal equivalence of the Ind-completions
\[\bbeta^\fg\colon D^G(\Sym^\bullet(\fg[-2]))\iso\mathrm{D}\modu_{1/2}^{G_\bO,\ren}(\Gr_G).\]
Here $\mathrm{D}\modu_{1/2}^{G_\bO,\ren}(\Gr_G)$ stands for the {\em renormalized}
category~\cite[\S12.2.3]{aga}.
It also extends to the same named monoidal equivalence
\begin{equation}
  \label{nilp twist}
  \bbeta^\fg\colon D^G_{\on{nilp}}(\Sym^\bullet(\fg[-2]))\iso\mathrm{D}\modu_{1/2}^{G_\bO}(\Gr_G),
\end{equation}
similarly to~\cite[Corollary 12.5.5]{aga}. Here $D^G_{\on{nilp}}(\Sym^\bullet(\fg[-2]))$ stands for
the category of $G$-equivariant dg-modules over $\Sym^\bullet(\fg[-2])$ with nilpotent support.

We will also use the untwisted Satake equivalence of~\cite{bf}
\[\bbeta^{\gvee}\colon D^{G^\vee}_\perf(\Sym^\bullet(\gvee[-2]))\iso\mathrm{D}\modu^{G_\bO,\lc}(\Gr_G),\]
along with its Ind-completion
\[\bbeta^{\gvee}\colon D^{G^\vee}(\Sym^\bullet(\gvee[-2]))\iso\mathrm{D}\modu^{G_\bO,\ren}(\Gr_G).\]
Here $G^\vee=\SO(2n+1)=\SO(V_0)$ is the Langlands dual Lie group, while $\gvee=\fso(2n+1)=\fso(V_0)$
is the Langlands dual Lie algebra, and the $2n+1$-dimensional vector space $V_0$ is equipped with a
nondegenerate symmetric bilinear form $(\, ,)$.

It also extends to the same named monoidal equivalence~\cite[Corollary 12.5.5]{aga}
\begin{equation}
  \label{nilp}
  \bbeta^{\gvee}\colon D^{G^\vee}_{\on{nilp}}(\Sym^\bullet(\gvee[-2]))\iso\mathrm{D}\modu^{G_\bO}(\Gr_G).
\end{equation}

\subsubsection{Symplectic mirabolic category}
\label{mirab}
We denote by $\CalD\CW\modu$ the tensor product $\mathrm{D}\modu_{1/2}(\Gr_G)\otimes\CW\modu$.
We consider the category $\CalD\CW\modu^{G_\bO,\lc}$ of locally compact $G_\bO$-equivariant objects and
its Ind-completion $\CalD\CW\modu^{G_\bO,\ren}$.

The {\em right} convolution action of $\mathrm{D}\modu_{1/2}^{G_\bO,\lc}(\Gr_G)$ on itself gives rise to
the action of $\mathrm{D}\modu_{1/2}^{G_\bO,\lc}(\Gr_G)$ on $\CalD\CW\modu^{G_\bO,\lc}$.
Furthermore, recall the twisted action $\mathrm{D}\modu_{-1/2}(G_\bF)\circlearrowright\CW\modu$.
The action of $G_\bF$ on $\Gr_G$ gives rise to the action
$\mathrm{D}\modu_{1/2}(G_\bF)\circlearrowright\mathrm{D}\modu_{1/2}(\Gr_G)$.
These two actions combine to give the diagonal {\em non-twisted} action
$\mathrm{D}\modu(G_\bF)\circlearrowright\CalD\CW\modu$. Finally, this latter action gives rise
to the left action $\mathrm{D}\modu^{G_\bO,\lc}(\Gr_G)\circlearrowright\CalD\CW\modu^{G_\bO,\lc}$.

All in all, we obtain an action \[\mathrm{D}\modu^{G_\bO,\lc}(\Gr_G)\otimes
\mathrm{D}\modu_{1/2}^{G_\bO,\lc}(\Gr_G)\circlearrowright\CalD\CW\modu^{G_\bO,\lc}.\]

We will also need another realization $\CalD'\CW\modu^{G'_\bO,\lc}\cong\CalD\CW\modu^{G_\bO,\lc}$:
we have the natural equivalences
\begin{multline}
  \label{two real}
  \CalD\CW\modu^{G_\bO}=(\mathrm{D}\modu_{1/2}(\Gr_G)\otimes\CW\modu)^{G_\bO}\\
  \cong(\mathrm{D}\modu(\Gr_G)\otimes\mathrm{D}\modu_{1/2}(\Gr_G)\otimes\CW\modu)^{G_\bF}\\
  \cong(\mathrm{D}\modu(\Gr_G)\otimes\CW\modu)^{G'_\bO}=:\CalD'\CW\modu^{G'_\bO}.
\end{multline}
Here the action of $G_\bF$ on $\mathrm{D}\modu(\Gr_G)\otimes\CW\modu$ is {\em twisted}:
$\mathrm{D}\modu_{-1/2}(G_\bF)\circlearrowright\mathrm{D}\modu(\Gr_G)\otimes\CW\modu$. So
$(\mathrm{D}\modu(\Gr_G)\otimes\CW\modu)^{G'_\bO}$ denotes the category of $G_\bO$-equivariant
objects with respect to this twisted action of $G_\bO\subset G_\bF$.
It is equipped with the right action of $\mathrm{D}\modu^{G_\bO}(\Gr_G)$ and the left action of
$\mathrm{D}\modu_{-1/2}^{G_\bO}(\Gr_G)$.

The equivalence $\CalD\CW\modu^{G_\bO}\cong\CalD'\CW\modu^{G'_\bO}$ is compatible with the actions of
$\mathrm{D}\modu^{G_\bO}(\Gr_G)$ and $\mathrm{D}\modu_{\pm1/2}^{G_\bO}(\Gr_G)$ in the following way.
First of all, the inversion $g\mapsto g^{-1}\colon G_\bF\to G_\bF$, gives rise to a monoidal
anti-involution of $\mathrm{D}\modu^{G_\bO}(\Gr_G)$. The left action of $\mathrm{D}\modu^{G_\bO}(\Gr_G)$
on $\CalD\CW\modu^{G_\bO}$ goes to the right action of $\mathrm{D}\modu^{G_\bO}(\Gr_G)$ on
$\CalD'\CW\modu^{G'_\bO}$ composed with the above anti-involution. Let $\fC$ denote the Chevalley
involution of $G$ (the canonical outer automorphism of $G$ interchanging conjugacy classes of
$g$ and $g^{-1}$). We keep the same notation for the induced involution of
$\mathrm{D}\modu^{G_\bO}(\Gr_G)$. Then the above anti-involution coincides with $\fC$ (due to the
commutativity constraint on $\mathrm{D}\modu^{G_\bO}(\Gr_G)$, there is no difference between
involutions and anti-involutions). Finally,
notice that $\fC$ is trivial since the Dynkin graph of $G$ has no automorphisms.
Second, in a similar vein, the right action of $\mathrm{D}\modu_{1/2}^{G_\bO}(\Gr_G)$ on
$\CalD\CW\modu^{G_\bO}$ goes to the left action of $\mathrm{D}\modu_{-1/2}^{G_\bO}(\Gr_G)$ on
$\CalD'\CW\modu^{G'_\bO}$ composed with the twisting by the inverse determinant line bundle
$\CalD^{-1}$ on $\Gr_G$.

\subsubsection{Theta-sheaf}
\label{theta}
Let $\GR_G$ be the Kashiwara (infinite type) scheme version of the affine Grassmannian of $G$:
it is the moduli space of $G$-bundles on $\BP^1$ equipped with a trivialization in the formal
neighbourhood of $\infty\in\BP^1$. Recall the Radon Transform
\[\RT_!\colon\mathrm{D}\modu_{-1/2}^{G_\bO}(\Gr_G)\iso\mathrm{D}\modu_{-1/2}^{G_\bO}(\GR_G)_!\]
(see e.g.~\cite[A.5]{bdfrt}). We will use the inverse equivalence $\RT_!^{-1}$.

We will also use the $G_\bO$-equivariant $-1/2$-twisted $D$-module
$\varTheta$ on $\GR_G$, see~\cite[\S2.2]{bdfrt}. It was introduced in~\cite[\S2]{laf}.
The perverse sheaf corresponding
to $\varTheta$ under the Riemann-Hilbert correspondence was introduced in~\cite{lys} and studied
in~\cite{ll} (over the base field $\BF_q$). We will use the following key relation between the
theta-sheaf and the twisted Satake equivalence. First we introduce the notation

\begin{equation}
  \label{R}
  \CR:=\RT_!^{-1}(\varTheta)
\end{equation}

We choose a pair of opposite maximal unipotent subgroups $U_G,U_G^-\subset G$, their regular
characters $\psi,\psi^-$, and denote by
$\bkappa^\fg\colon D^G(\Sym^\bullet(\fg[-2]))\to D(\BC[\Xi_\fg])$ the functor of Kostant-Whittaker
reduction with respect to $(U_G^-,\psi^-)$
(see e.g.~\cite[\S2]{bf}). Here $\Xi_\fg$ with grading disregarded is the tangent bundle $T\Sigma_\fg$
of the Kostant slice $\Sigma_\fg\subset\fg^*$. Let us write $\kappa$ for the
Ad-invariant bilinear form on $\fg$, i.e.\ level, corresponding to the central
charge of $-1/2$. Explicitly, if we write $\kappa_b$ for the basic level giving
the short coroots of $\fg$ squared length two, and $\kappa_c$ for the critical level,
then  $\kappa$ is defined by
\[
\kappa = - 1/2 \cdot \kappa_b-\kappa_c.
\]
If we consider the Langlands dual Lie algebra $\gvee\simeq\fso_{2n+1}$, the
form $\kappa$  gives rise to identifications $\Sigma_\fg\cong\Sigma_{\gvee}$ and
$\Xi_\fg\cong\Xi_{\gvee}$. Also, we have a canonical isomorphism
$H^\bullet_{G_\CO}(\Gr_G)\cong\BC[\Xi_{\gvee}]\cong\BC[\Xi_\fg]$. This is a theorem of
V.~Ginzburg~\cite{g1} (for a published account see e.g.~\cite[Theorem 1]{bf}).

Now given $\CF\in\mathrm{D}\modu_{1/2}^{G_\bO,\ren}(\Gr_G)$ we consider the tensor product
$\CF\overset{!}{\otimes}\CR$.
Since the twistings of the factors cancel out, the tensor product is an {\em untwisted}
$G_\bO$-equivariant $D$-module, and we can consider its equivariant De Rham cohomology.

The aforementioned key property is a canonical isomorphism~\cite{dlyz}
\begin{equation}
  \label{glob coh}
  H^\bullet_{G_\bO,\DR}(\Gr_G,\bbeta^\fg(M) \overset !\otimes\CR)\cong\bkappa^\fg M
\end{equation}
of $H^\bullet_{G_\CO}(\Gr_G)\cong\BC[\Xi_\fg]$-modules for any $M\in D^G(\Sym^\bullet(\fg[-2]))$.

Similarly, we have the Kostant-Whittaker reduction functor (see~\cite[\S2]{bf})
\[\bkappa^{\gvee}\colon D^{G^\vee}(\Sym^\bullet(\gvee[-2]))\to D(\BC[\Xi_{\gvee}])\cong D(\BC[\Xi_\fg]).\]
By~\cite[Theorem 4]{bf}, we have a canonical isomorphism
\begin{equation}
  \label{glob coh untwisted}
  H^\bullet_{G_\bO,\DR}(\Gr_G,\bbeta^{\gvee}(M))\cong\bkappa^{\gvee}M
\end{equation}
of $H^\bullet_{G_\CO}(\Gr_G)\cong\BC[\Xi_\fg]$-modules for any $M\in D^{G^\vee}(\Sym^\bullet(\gvee[-2]))$.

\subsection{Constructible realization of $\CalD\CW\modu^{G_\bO,\lc}$}
\label{constr}
Let $\on{Heis}$ stand for the Heisenberg central extension of $V_\bF$ with $\BG_a$ (canonically split
after restriction to $V_\bO$). Let $\chi$ be the character $D$-module on $G_\bO\ltimes V_\bO\times\BG_a$
equal to the pullback of the exponential $D$-module on $\BG_a$ with respect to the projection
$G_\bO\ltimes V_\bO\times\BG_a\to\BG_a$. We consider the category
$\mathrm{D}\modu_{1/2}(\Gr_G\times\on{Heis}):=
\mathrm{D}\modu_{1/2}(\Gr_G)\otimes\mathrm{D}\modu(\on{Heis})$.
It is equipped with the
strong diagonal action of $G_\bO\ltimes V_\bO\times\BG_a$ (the action on $\mathrm{D}\modu_{1/2}(\Gr_G)$
factors through the quotient $G_\bO$). We consider the category
$\mathrm{D}\modu_{1/2}^{G_\bO\ltimes V_\bO\times\BG_a,\chi,\lc}(\Gr_G\times\on{Heis})$ of locally compact
$\chi$-equivariant objects.

\begin{lem}
  \label{heis}
  There is an equivalence of categories
  \[\mathrm{D}\modu_{1/2}^{G_\bO\ltimes V_\bO\times\BG_a,\chi,\lc}(\Gr_G\times\on{Heis})\iso
  \CalD\CW\modu^{G_\bO,\lc}.\]
\end{lem}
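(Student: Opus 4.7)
The plan is to decouple the equivariances: factor out the $V_\bO\times\BG_a$ action on the $\on{Heis}$ factor, invoke a Stone--von Neumann type identification to convert the resulting Heisenberg category into $\CW\modu$, and reassemble with the $\Gr_G$ factor and residual $G_\bO$-equivariance.

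First, since the $G_\bO\ltimes V_\bO\times\BG_a$-action on $\mathrm{D}\modu_{1/2}(\Gr_G)$ factors through $G_\bO$, the subgroup $V_\bO\times\BG_a$ acts only on the $\on{Heis}$ factor. It is normalized by $G_\bO$, and the character $\chi$ is $G_\bO$-invariant (trivial on $V_\bO$ and exponential on the center $\BG_a$ on which $G_\bO$ acts trivially). Strong equivariance with character $\chi$ for this subgroup can therefore be taken on the second tensor factor first, yielding
$$\mathrm{D}\modu_{1/2}^{G_\bO\ltimes V_\bO\times\BG_a,\chi,\lc}(\Gr_G\times\on{Heis}) \cong \bigl[\mathrm{D}\modu_{1/2}(\Gr_G) \otimes \mathrm{D}\modu^{V_\bO\times\BG_a,\chi}(\on{Heis})\bigr]^{G_\bO,\lc}.$$

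Next, I would establish the Stone--von Neumann equivalence $\mathrm{D}\modu^{V_\bO\times\BG_a,\chi}(\on{Heis}) \iso \CW\modu$. Pick a Lagrangian $L\subset\bV$ complementary to $V_\bO$, e.g.\ $L=t^{-1}V_{\BC[t^{-1}]}$ as in \S\ref{weyl}, so that $\on{Heis}$ becomes a $(V_\bO\times\BG_a)$-torsor over $L$. The $\chi$-equivariance along $\BG_a$ turns the defining Heisenberg commutator $[v,w]=\langle v,w\rangle\cdot c$ into the Weyl relation $[v,w]=\langle v,w\rangle$, so the $\chi$-twisted differential operators on $\on{Heis}$ (invariant under the center) are precisely the Weyl algebra $\CW$ of $(\bV,\langle\,,\rangle)$. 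Strong $V_\bO$-equivariance then descends to the condition of being a discrete $\CW$-module in the sense of \S\ref{weyl}, i.e.\ $D$-modules on $L$ compatible with the exhaustion of $L$ by finite-dimensional subspaces. Plugging the resulting identification into the display above gives $\bigl[\mathrm{D}\modu_{1/2}(\Gr_G)\otimes\CW\modu\bigr]^{G_\bO,\lc}=\CalD\CW\modu^{G_\bO,\lc}$ by the definition of the symplectic mirabolic category in \S\ref{mirab}.

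The main technical obstacle is making the Stone--von Neumann step rigorous in the infinite-dimensional ind-scheme setting. $\on{Heis}$ is a group ind-scheme of infinite type, $V_\bO$ is pro-finite-dimensional, and strong equivariance for such groups requires careful bookkeeping of limits and colimits of $D$-module categories. In particular, one must verify that the two equivalent presentations of $\CW\modu$ recalled in \S\ref{weyl} (as the inverse limit of $\mathrm{D}\modu(U)$ along $i^!_{U\hookrightarrow U'}$, and as the colimit along $i_{U\hookrightarrow U',*}$, over finite-dimensional $U\subset L$) match the strong $(V_\bO\times\BG_a,\chi)$-equivariance on the Heisenberg side, and that locally compact objects correspond to locally compact objects under this matching. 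Once this is done, compatibility with the commuting $G_\bO$-action is automatic, and the lemma follows.
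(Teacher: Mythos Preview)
Your approach is correct and is essentially the same as the paper's, just with more detail unpacked. The paper's proof is a one-sentence version of yours: it names the explicit object $\IC_0\otimes\BC[V_\bO]\in\CalD\CW\modu$, observes it is strongly $(G_\bO\ltimes V_\bO\times\BG_a,\chi)$-equivariant, and declares that the resulting functor is the desired equivalence --- the Stone--von Neumann identification $\mathrm{D}\modu^{V_\bO\times\BG_a,\chi}(\on{Heis})\iso\CW\modu$ you spell out is precisely what underlies that assertion, with $\BC[V_\bO]$ playing the role of the unique irreducible Heisenberg module.
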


\begin{proof}
  Let $\IC_0\in\mathrm{D}\modu_{1/2}(\Gr_G)^{G_\bO,\lc,\heartsuit}$ denote the irreducible twisted $D$-module
  supported at the base point of $\Gr_G$. Then the tensor product
  $\IC_0\otimes\BC[V_\bO]\in\CalD\CW\modu$ is strongly $(G_\bO\ltimes V_\bO\times\BG_a,\chi)$-equivariant,
  and so gives rise to a functor from
  $\mathrm{D}\modu_{1/2}^{G_\bO\ltimes V_\bO\times\BG_a,\chi,\lc}(\Gr_G\times\on{Heis})$ to
  $\CalD\CW\modu^{G_\bO,\lc}$ that is the desired equivalence.
\end{proof}

\begin{rem}
  A similar category of $\ell$-adic sheaves over the base field $\BF_q$ was studied in~\cite{ll}.
\end{rem}

We consider the following $\BG_m$-action on
$\on{Heis}\simeq V_\bF\oplus\BG_a\colon c\cdot(v,a)=(cv,c^2a)$. Then~\cite[\S1.6]{ga2} defines
the {\em Kirillov} model $\CK ir$ (a category equivalent to
$\mathrm{D}\modu_{1/2}^{G_\bO\ltimes V_\bO\times\BG_a,\chi,\lc}(\Gr_G\times\on{Heis})$) as follows.
First we consider the full
subcategory $\CC\subset\mathrm{D}\modu_{1/2}^{G_\bO\ltimes V_\bO,\lc}(\Gr_G\times\on{Heis})$
formed by the objects killed by the
averaging functor $\on{Av}_*^{\BG_a}$ (averaging {\em without} the exponential character $\chi$).
Then we define $\CK ir:=\CC^{\BG_m}$. According to {\em loc.cit.}, there is a canonical equivalence
$\CK ir\cong\mathrm{D}\modu_{1/2}^{G_\bO\ltimes V_\bO\times\BG_a,\chi,\lc}(\Gr_G\times\on{Heis})$.

Finally, applying the Riemann-Hilbert equivalence, we obtain the constructible version
$\CK ir_{\on{constr}}\cong\CK ir\cong
\mathrm{D}\modu_{1/2}^{G_\bO\ltimes V_\bO\times\BG_a,\chi,\lc}(\Gr_G\times\on{Heis})\cong\CalD\CW\modu^{G_\bO,\lc}$.

\begin{rem}
  \label{prime}
Another incarnation $\CalD'\CW\modu^{G'_\bO,\lc}$ of the mirabolic category $\CalD\CW\modu^{G_\bO,\lc}$
(see~\S\ref{mirab}) has a similar constructible realization. We consider the category
$\mathrm{D}\modu(\Gr_G\times\on{Heis})=
\mathrm{D}\modu(\Gr_G)\otimes\mathrm{D}\modu(\on{Heis})$. It is equipped with the
strong diagonal action of $G_\bO\ltimes V_\bO\times\BG_a$ (the action on $\mathrm{D}\modu(\Gr_G)$
factors through the quotient $G_\bO$). We consider the category
$\mathrm{D}\modu^{G_\bO\ltimes V_\bO\times\BG_a,\chi,\lc}(\Gr_G\times\on{Heis})$ of locally compact
$\chi$-equivariant objects.
We have the corresponding Kirillov category $\CK ir'\cong
\mathrm{D}\modu^{G_\bO\ltimes V_\bO\times\BG_a,\chi,\lc}(\Gr_G\times\on{Heis})\cong\CalD'\CW\modu^{G'_\bO,\lc}$.
\end{rem}

\subsection{Fusion}
Let $X$ be a smooth curve. For any integer $k>0$, and a collection $x=(x_i)_{i=1}^k$ of
$S$-points of $X$, we denote by $\CalD_x$ the formal neighborhood of the union of graphs
$|x| := \bigcup_{i=1}^k \Gamma_{x_i}\subset S\times X$, and we denote by
$\cD_x^\circ :=\CalD_x \setminus |x|$ the punctured formal neighborhood.

The mirabolic version of the Beilinson-Drinfeld Grassmannian is the
     ind-scheme $\Gr^\mir_{G,BD,k}$ over $X^k$ parametrizing the following collections of data:
\[ (x_i)_{i=1}^k,\ \CE,\ \phi\colon \CE_\triv|_{\CalD_x^\circ}\iso\CE|_{\CalD_x^\circ},\
v\in \Gamma(\CalD_x^\circ,\cE) , a\in\BG_a\]
where $\CE$ is a rank $2n$ vector bundle on $\CalD_x$ equipped with a symplectic form.
In case $X=\BA^1$, over the complement to the diagonals we have a canonical isomorphism
\[(\BA^k\setminus\Delta)\x_{\BA^k}\Gr^\mir_{G,BD,k}\cong
(\BA^k\setminus\Delta)\times(\Gr_G\times V_\bF)^k\times\BG_a.\] We denote the projection
$(\BA^k\setminus\Delta)\times(\Gr_G\times V_\bF)^k\times\BG_a\to(\Gr_G\times V_\bF)^k\times\BG_a$
by $\on{pr}_2$.
Given $\CF_1,\CF_2\in\mathrm{D}\modu_{1/2}^{G_\bO\ltimes V_\bO\times\BG_a,\chi,\lc}(\Gr_G\times\on{Heis})$,
we take $k=2$ and define the fusion
\[\CF_1\star\CF_2:=\pr_{2*}\psi_{x-y}\on{pr}_2^*(\CF_1\boxtimes\CF_2)[1],\]
where $x,y$ are coordinates on $\BA^2$ (so that $x-y=0$ is the equation of the diagonal
$\Delta\subset\BA^2$), and $\psi_{x-y}$ is the nearby cycles functor for the pullback of the
function $x-y$ to $\Gr^\mir_{G,BD,2}$.
Note that the leftmost occurence of $\pr_2$ in the above definition projects
$\BA^1\times\Gr_G\times V_\bF\times\BG_a=\BA^1\times\Gr_G\times\on{Heis}$ to $\Gr_G\times\on{Heis}$,
while the rightmost occurence of $\pr_2$ projects
$(\BA^2\setminus\Delta)\times(\Gr_G\times V_\bF)^2\times\BG_a$ to $(\Gr_G\times\on{Heis})^2$
(and the latter copy of $\pr_2$ coincides with the diagonal embedding
$\BG_a\hookrightarrow\BG_a\times\BG_a$ of the $\BG_a$-factor of $\on{Heis}$).

\subsection{Orthogonal and symplectic Lie algebras}
\label{sosp}
We adapt~\cite[\S2.1]{bft} to our present setup. The tensor product $V_0\otimes V$ is equipped with a
nondegenerate skew-symmetric bilinear form $(\, ,)\otimes\langle\, ,\rangle$. It is preserved by the
action of the group $\SO(V_0)\times\Sp(V)$.

Our nondegenerate bilinear forms on $V_0,V$ define identifications
$V_0\cong V_0^*,\ V\cong V^*$. In particular, $V_0\otimes V$ is identified with
$V_0^*\otimes V=\Hom(V_0,V)$. Given $A\in\Hom(V_0,V)$ we have the adjoint operator
$A^t\in\Hom(V,V_0)$. We have the moment maps
\[\bq_0\colon V_0\otimes V\to\fso(V_0)^*,\ A\mapsto A^tA,\ \operatorname{and}\
\bq\colon V_0\otimes V\to\fsp(V)^*,\ A\mapsto AA^t,\]
where we make use of the identification $\fso(V_0)\cong\fso(V_0)^*$
(resp.\ $\fsp(V)\cong\fsp(V)^*$)
via the trace form (resp.\ {\em negative} trace form) of the defining representation.
Note also that the complete moment map $(\bq_0,\bq)$ coincides with the ``square''
(half-self-supercommutator) map on the odd part $\bg_{\bar1}$ of the orthosymplectic Lie
superalgebra $\bg$. We define the odd nilpotent cone $\CN_{\bar1}\subset V_0\otimes V$
as the {\em reduced} subscheme cut out by
the condition of nilpotency of $A^tA$ (equivalently, by the condition of nilpotency of $AA^t$).

We choose Cartan subalgebras $\ft_0\subset\fso(V_0)$ and $\ft\subset\fsp(V)$.
We choose a basis $\varepsilon_1,\ldots,\varepsilon_n$ in $\ft_0^*$ such that the Weyl group
$W_0=W(\fso(V_0),\ft_0)$ acts by permutations of basis elements and by the sign changes
of basis elements, and the roots of $\fso(V_0)$ are given by $\{\pm\eps_i\pm\eps_j,\ i\neq j;\ \pm\eps_i\}$.
We set $\Sigma_{\gvee}=\ft_0^*\dslash W_0$.
We also choose a basis $\delta_1,\ldots,\delta_n$ in $\ft^*$ such that the Weyl group
$W=W(\fsp(V),\ft)$ acts by permutations of basis elements and by the sign changes
of basis elements, and the roots of $\fsp(V)$ are given by $\{\pm\delta_i\pm\delta_j,\ i\neq j;\
\pm2\delta_i\}$. We set $\Sigma_\fg=\ft^*\dslash W$.

We identify $\ft_0^*\cong\ft^*,\ \varepsilon_i\mapsto\delta_i$,
and this identification gives rise to an isomorphism $\varPi\colon \Sigma_{\gvee}\to\Sigma_\fg$.

Recall (see e.g.~\cite[\S\S2.1,2.6]{bf}) that $\Sigma_{\gvee}$ is embedded as a Kostant slice
into the open set of regular elements $(\fso(V_0)^*)^\reg\subset\fso(V_0)^*$, and $\Sigma_\fg$
is embedded into $(\fsp(V)^*)^\reg$. Furthermore, these slices $\Sigma_{\gvee},\Sigma_\fg$ carry the
universal centralizer sheaves of abelian Lie algebras $\fz_{\gvee},\fz_\fg$.
Given an $\SO(V_0)$-module $M$ (resp.\ an $\Sp(V)$-module $M'$), we have the corresponding graded
$\Gamma(\Sigma_{\gvee},\fz_{\gvee})$-module $\bkappa^{\gvee}(M\otimes\Sym^\bullet(\gvee[-2]))$ (resp.\ the
$\Gamma(\Sigma_\fg,\fz_\fg)$-module $\bkappa^\fg(M'\otimes\Sym^\bullet(\fg[-2]))$)
(the {\em Kostant functor} of {\em loc.cit.}, cf.\ notation of~\S\ref{theta}).
Since the universal enveloping algebra $U(\fz_{\gvee})$ (resp.\ $U(\fz_\fg)$) is identified in
{\em loc.cit.}\ with the sheaf of functions on the tangent bundle $T\Sigma_{\gvee}\cong\Xi_{\gvee}$
(resp.\ $T\Sigma_\fg\cong\Xi_\fg$), we will use the same notation
$\bkappa^{\gvee}(M\otimes\Sym^\bullet(\gvee[-2])),\bkappa^\fg(M'\otimes\Sym^\bullet(\fg[-2]))$
for the corresponding coherent sheaves on $\Xi_{\gvee}, \Xi_\fg$.
Finally, according to the previous paragraph, we have the isomorphism
$d\varPi\colon \Xi_{\gvee}=T\Sigma_{\gvee}\to T\Sigma_\fg=\Xi_\fg$.

We choose Borel subalgebras $\ft_0\subset\fb_0\subset\fso(V_0)$ corresponding to the
choice of positive roots $R_0^+=\{\varepsilon_i\pm\varepsilon_j,\ i<j;\ \eps_i\}$
and $\ft\subset\fb\subset\fsp(V)$ corresponding to the choice of positive roots
$R^+=\{\delta_i\pm\delta_j,\ i<j;\ 2\delta_i\}$. We set $\rho_0=\frac12\sum_{\alpha\in R_0^+}\alpha$
and $\rho=\frac12\sum_{\alpha\in R^+}\alpha$.
We denote by $\Lambda_0$ (resp.\ $\Lambda_1$) the weight
lattice of $\SO(V_0)$ (resp.\ of $\Sp(V)$). We denote by $\Lambda_0^+\subset\Lambda_0$
(resp.\ $\Lambda_1^+\subset\Lambda_1$)
the monoids of dominant weights. For $\lambda\in\Lambda_0^+$ (resp.\ $\lambda\in\Lambda_1^+$)
we denote by $V_\lambda$ the irreducible representation of $\SO(V_0)$ (resp.\ of $\Sp(V)$)
with highest weight $\lambda$.

\subsection{The main theorem}
\label{main theorem}
We keep the notation of~\cite[\S2.2]{bft} with a single exception: we replace $V_1$ of {\em loc.cit.}\
with $V$. In particular, $\sG$ stands for the supergroup $\on{SOSp}(V_0|V)$ with the even part
$\sG_{\bar0}=\SO(V_0)\times\Sp(V)$, and $\bg$ stands for its Lie superalgebra $\osp(V_0|V)$.
Also, $\fG^\bullet$ stands for the dg-algebra\footnote{We view $\bg_{\bar1}$ as an {\em odd} vector space,
so that $\Sym^\bullet(\bg_{\bar1}[-1])$ (with grading disregarded) is really a symmetric (infinite-dimensional)
algebra, not an exterior algebra.} $\Sym^\bullet(\bg_{\bar1}[-1])$ with trivial differential,
and $\Lambda$ stands for the exterior algebra $\Lambda(V_0\otimes V)$.
Finally, $D^b\Rep(\ul\sG)$ stands for the dg-category of finite dimensional complexes of $\ul\sG$-modules.

Our goal is the following

\begin{thm}
\label{main}
  \textup{(a)} There exists an equivalence of triangulated categories
  $\Phi\colon D^{\sG_{\bar0}}_\perf(\fG^\bullet)\iso\CalD\CW\modu^{G_\bO,\lc}$
  commuting with the convolution action of the monoidal spherical Hecke category
  \[D^{G^\vee}_\perf\big(\Sym^\bullet(\gvee[-2])\big)\otimes D^G_\perf\big(\Sym^\bullet(\fg[-2])\big)\cong
  \mathrm{D}\modu^{G_\bO,\lc}(\Gr_G)\otimes\mathrm{D}\modu_{1/2}^{G_\bO,\lc}(\Gr_G).\]

\textup{(b)} The composed equivalence
  \[\Phi\circ\varkappa\colon
  D_\fid^{\sG_{\bar0}}(\Lambda)\iso\CalD\CW\modu^{G_\bO,\lc}\]
  is exact with respect to the tautological $t$-structure on
  $D_\fid^{\sG_{\bar0}}(\Lambda)$ and the tautological $t$-structure on $\CalD\CW\modu^{G_\bO,\lc}$.

  \textup{(c)} This equivalence is monoidal with respect to the tensor structure on
  $D_\fid^{\sG_{\bar0}}(\Lambda)$ and the fusion $\star$ on $\CalD\CW\modu^{G_\bO,\lc}$.

  \textup{(d)} The equivalence of \textup{(b)} extends to a monoidal equivalence from
  $SD_\fid^{G_{\bar0}}(\Lambda)=D^b\Rep(\ul\sG)$ to
  $S\CalD\CW\modu^{G_\bO,\lc}:=\CalD\CW\modu^{G_\bO,\lc}\otimes_{\on{Vect}}\on{SVect}$.

  \textup{(e)} The category $\CalD\CW\modu^{G_\bO,\lc}$ is equivalent to the
  dg-category of bounded complexes of the abelian category $\CalD\CW\modu^{G_\bO,\lc,\heartsuit}$
  (the heart of the tautological $t$-structure).

\end{thm}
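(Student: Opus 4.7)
Following the template of \cite{bft}, the plan is to first produce an equivalence
\[\Phi\colon D^{\sG_{\bar0}}_\perf(\fG^\bullet)\iso\CalD\CW\modu^{G_\bO,\lc}\]
by matching natural generators on the two sides, and then to derive the remaining assertions (b)--(e) by checking each property on these generators. The Koszul duality $\varkappa\colon D_\fid^{\sG_{\bar0}}(\Lambda)\iso D^{\sG_{\bar0}}_\perf(\fG^\bullet)$ lets us freely switch between the two coherent incarnations, so we may formulate (b)--(e) on whichever side is more convenient.

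The first step is to identify the \emph{relevant} $G_\bO$-orbits on $(V_\bF/V_\bO)\times\Gr_G$: as the introduction indicates, only a discrete subset of these orbits supports $G_\bO$-equivariant twisted $D$-modules, and this subset is naturally parametrized by $\Lambda_0^+\times\Lambda_1^+$. Using the constructible realization $\CK ir_{\on{constr}}\cong\CalD\CW\modu^{G_\bO,\lc}$ of \secref{constr}, to each pair $(\lambda_0,\lambda_1)\in\Lambda_0^+\times\Lambda_1^+$ one attaches the twisted intersection cohomology extension $\CF_{\lambda_0,\lambda_1}$ of the unique $(G_\bO\ltimes V_\bO\times\BG_a,\chi)$-equivariant twisted local system on the corresponding orbit. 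On the coherent side the obvious generators are $V_{\lambda_0}\boxtimes V_{\lambda_1}\otimes\fG^\bullet$, and the plan is to send the latter to the former.

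The main obstacle, already signaled in the introduction, is the computation of
\[\on{Ext}^\bullet_{\CalD\CW\modu^{G_\bO,\lc}}(\CF_{\lambda_0,\lambda_1},\CF_{\mu_0,\mu_1}).\]
In \cite{bft} such $\on{Ext}$-groups were reduced to equivariant de Rham cohomology via Ginzburg's isomorphism $H^\bullet_{G_\bO}(\Gr_G)\cong\BC[\Xi_\fg]$, but here the $1/2$-twist forces the naive equivariant cohomology to vanish. The remedy is to tensor with the theta-sheaf $\CR$ of \eqref{R}, which cancels the twisting, and to invoke the key formula \eqref{glob coh} of \cite{dlyz} to identify the global sections with the Kostant--Whittaker reduction $\bkappa^\fg$. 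Using the bimodule structure under $\mathrm{D}\modu^{G_\bO,\lc}(\Gr_G)\otimes\mathrm{D}\modu_{1/2}^{G_\bO,\lc}(\Gr_G)$ (which is the Hecke-equivariance part of assertion (a)) to reduce the pairwise $\on{Ext}$ computation to a single base case, and matching the two sides via the isomorphism $d\varPi\colon\Xi_{\gvee}\iso\Xi_\fg$ of \secref{sosp} together with the untwisted formula \eqref{glob coh untwisted}, one obtains the desired coincidence. This endoscopic step is the genuinely new input and will be the most delicate part of the entire argument.

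With $\Phi$ constructed and the Hecke-equivariance of (a) in hand, the remaining parts follow the \cite{bft} pattern. For (b), one computes the perverse cohomology sheaves of $\Phi\circ\varkappa$ applied to the standard generators coming from $\Lambda(V_0\otimes V)$ and checks concentration in a single degree; the necessary dimension and shift counts come from the orbit classification of step one. Part (c) is verified by a factorization/nearby cycles argument on the Beilinson--Drinfeld mirabolic Grassmannian $\Gr^\mir_{G,BD,k}$ of \S2.3, matching the fusion $\star$ with the tensor product on $D_\fid^{\sG_{\bar0}}(\Lambda)$. Part (d) is the standard super-extension: adjoining an odd line to enlarge $\sG_{\bar0}$-representations to $\ul\sG$-representations on the left corresponds to passing to $S\CalD\CW\modu^{G_\bO,\lc}$ on the right. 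Finally (e) is formal from (b): once the $\CF_{\lambda_0,\lambda_1}$ are shown to lie in the heart and to generate the triangulated category by finite extensions and shifts, $\CalD\CW\modu^{G_\bO,\lc}$ is automatically equivalent to the bounded derived category of its tautological heart.
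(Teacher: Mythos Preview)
Your overall strategy matches the paper's, but you have blurred the central computation and inverted part of its logical order.

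The paper does \emph{not} build $\Phi$ by first identifying the IC sheaves on relevant orbits and then matching them to free modules. Instead it works with the single object $E_0$ and its \emph{de-equivariantized} Ext algebra $\fE^\bullet:=\Ext^\bullet_{\CalD\CW\modu^{G_\bO,\deeq}}(E_0,E_0)$. Lemma~\ref{formality} (formality, proved by purity over $\BF_q$ via the constructible model) and Lemma~\ref{purity} give $D^{\sG_{\bar0}}_\perf(\fE^\bullet)\iso\CalD\CW\modu^{G_\bO,\lc}$ automatically; the whole content of~(a) then reduces to proving $\fE^\bullet\cong\fG^\bullet$. You never mention formality, and it is not a formality: this is where the passage to $\CK ir_{\on{constr}}$ is actually used.

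Your description of the Ext computation conflates \emph{cohomology} with \emph{Ext}. Tensoring with $\CR$ and invoking~\eqref{glob coh}, together with the Hecke eigenproperty (Proposition~\ref{eigen}, Corollary~\ref{eigenval}), only computes $H^\bullet_{G_\bO,\DR}(\IC_{\lambda_0}*E_0*\IC_{\lambda_1})$ (Corollary~\ref{cohomol}). To get at $\fE^\bullet$ the paper sandwiches it: Lemma~\ref{injective} embeds Ext into $\Hom_{\BC[\Xi_\fg]}$ (the proof uses the parity structure of $\CR=\CR_0\oplus\CR_1$ and a Cousin degeneration, not just~\eqref{glob coh}); Proposition~\ref{inv the} (invariant theory) bounds that target by $\fG^\bullet$; and \S\ref{generators} constructs explicit degree-$1$ classes $h,h^*\in\Ext^1$ between $E_0*\IC_\varpi$ and $\IC_\omega*E_0$, supplying the lower bound. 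The isomorphism $\fG^\bullet\iso\fE^\bullet$ is then the lemma in \S\ref{proof}. None of these three ingredients appear in your outline, and without them ``one obtains the desired coincidence'' is a hope rather than an argument.

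Finally, the statement that $\Phi$ sends $V_{\lambda_0}\boxtimes V_{\lambda_1}\otimes\fG^\bullet$ to the IC sheaf on the relevant orbit is a \emph{consequence} (Corollary~\ref{irreducibility}), not an input: a priori $\Phi$ sends it to the Hecke translate $\IC_{\lambda_0}*E_0*\IC_{\lambda_1}$, and that this convolution is perverse and irreducible, hence equals $\IC_{(\lambda_1,\lambda_0)}$, is deduced only after $\Phi$ is in place. Your sketch of~(b)--(e) is otherwise in line with the paper, which likewise defers to~\cite[\S\S2.7,2.10]{bft}.
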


The proof will be given in~\S\ref{proof}.

\subsection{Irreducibles in $\CalD\CW\modu^{G_\bO,\lc,\heartsuit}$}
\label{2.6}
We identify $\Gr_G$ with the moduli space of parahoric subalgebras in $\fg_\bF$ conjugate to
$\fg_\bO$, and we identify the cotangent bundle $T^*\Gr_G$ with the space of pairs $(\fp,x)$ where
$\fp$ is a parahoric subalgebra, and $x$ lies in the nilpotent radical of $\fp$.
We identify $\fg_\bO^*$ with $\fg_\bF/t\fg_\bO$. The moment map
$\bmu\colon V_\bF\times T^*\Gr_G\to\fg_\bO^*$ of the $G_\bO$-action takes $(v,\fp,x)$ to
$S^2v+x\pmod{t\fg_\bO}$. Here we identify $\fg_\bF$ with $\Sym^2V_\bF$. Thus the zero level
$\bmu^{-1}(0)$ of the moment map is the space of triples $(v,\fp,x)$ such that $S^2v+x\in t\fg_\bO$.

\begin{lem}
  \label{lagrangian}
  The zero level $\bmu^{-1}(0)\subset V_\bF\times T^*\Gr_G$ of the moment map is Lagrangian.
\end{lem}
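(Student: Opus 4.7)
My plan is to apply the standard two-step proof that moment-map zero fibers are Lagrangian: coisotropy from general principles, then isotropy via an explicit check. Since $\bmu$ is a moment map for the Hamiltonian $G_\bO$-action on the symplectic ind-scheme $V_\bF\times T^*\Gr_G$, its zero fiber is automatically coisotropic at any point where the action is locally free (which is generically the case), so the real content of the lemma is isotropy.

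For isotropy, I would use the general symplectic identity $(T_p\bmu^{-1}(0))^\perp=T_p(G_\bO\cdot p)$, which reduces isotropy at $p\in\bmu^{-1}(0)$ to the infinitesimal transitivity statement $T_p\bmu^{-1}(0)\subset T_p(G_\bO\cdot p)$. I would verify this by a direct computation from the explicit formula $\bmu(v,\fp,x)=S^2v+x\bmod t\fg_\bO$: given a tangent vector $(\dot v,\dot\fp,\dot x)$ with $d\bmu_p=0$, one exhibits an $X\in\fg_\bO$ whose infinitesimal action at $(v,\fp,x)$ produces it, by solving the resulting linear system (using the surjectivity of the infinitesimal action modulo stabilizers).

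An alternative, more conceptual route exploits that $V_\bO\subset V_\bF$ is a $G_\bO$-invariant Lagrangian: choosing a (non-equivariant) Lagrangian complement gives a symplectic identification $V_\bF\times T^*\Gr_G\cong T^*(V_\bO\times\Gr_G)$. Under this, $\bmu$ agrees with the standard cotangent-lifted moment map of the $G_\bO$-action on $V_\bO\times\Gr_G$ up to the pullback of a closed $G_\bO$-equivariant $1$-form from the base, which does not affect the Lagrangian property of the zero fiber. The cotangent-lift zero fiber is the union of conormal bundles to $G_\bO$-orbits on $V_\bO\times\Gr_G$, and is therefore manifestly Lagrangian.

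The main obstacle is the infinite-dimensional ind-scheme / Tate framework: one must make precise what ``Lagrangian'' means for a sub-ind-scheme of $V_\bF\times T^*\Gr_G$, and verify the required dimension count on finite-type approximations (e.g., restricting to preimages of orbit closures $\overline{\Gr_G^\lambda}\subset\Gr_G$ and to finite-dimensional $G_\bO$-stable subspaces of $V_\bF$), then pass to the limit. The identification with a cotangent bundle is a useful guide, but one must verify cleanly that the discrepancy between the two moment maps is purely ``base-horizontal'' and so preserves the Lagrangian structure of the zero fiber.
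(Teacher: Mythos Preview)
Your coisotropy argument is fine and matches the paper's. The difficulty is with both of your isotropy strategies.

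In approach (a), the reduction of isotropy to the inclusion $T_p\bmu^{-1}(0)\subset T_p(G_\bO\cdot p)$ is correct, but this inclusion is exactly the statement that $G_\bO$ acts infinitesimally transitively on the zero fiber, i.e.\ that the symplectic reduction is zero-dimensional. That is the entire content of the lemma, so you have only restated it. Your proposal to ``solve the resulting linear system'' is not carried out, and there is no a priori reason it should succeed: the paper notes that the $\Sp(2n,\bO)$-orbits on $(V_\bF/V_\bO)\times\Gr_G$ form an \emph{uncountable} family, so the situation is genuinely more subtle than the cotangent picture suggests.

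In approach (b), the discrepancy between the quadratic moment map $v\mapsto S^2v$ on $V_\bF$ and the cotangent-lifted moment map for $G_\bO$ on $V_\bO$ is \emph{not} a pullback from the base. Writing $v=w+\ell$ with $w\in V_\bO$ and $\ell$ in your (necessarily non-equivariant) Lagrangian complement $L$, one finds an extra term $\tfrac12\langle(X\ell)_{V_\bO},\ell\rangle$ depending only on the fiber coordinate $\ell$; it arises precisely because $L$ is not $G_\bO$-stable. So the zero fibers of the two moment maps differ substantially, and your claim that the difference ``does not affect the Lagrangian property'' is unsupported. Moreover, the assertion that the union of conormals is ``manifestly Lagrangian'' presupposes that the $G_\bO$-orbits on the base are discrete, which you have not verified.

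The paper's argument bypasses both difficulties. It realizes $V_\bF\times T^*\Gr_G$, together with its moment map $\bmu$, as the fixed locus of an involution $\sigma$ on the auxiliary space $T^*V_\bF\times T^*\Gr_{\GL(2n)}$ (with $\sigma$ swapping the two $V_\bF$-factors and acting on $\GL(2n)$ with fixed subgroup $\Sp(2n)$). For $\GL(2n,\bO)$ the orbits on the mirabolic Grassmannian $V_\bF\times\Gr_{\GL(2n)}$ \emph{are} discrete, so $\bmu_{\GL(2n)}^{-1}(0)$ is Lagrangian, hence isotropic. The $\sigma$-fixed points of an isotropic subvariety remain isotropic in the fixed-point symplectic space, which yields isotropy of $\bmu^{-1}(0)$ without any direct analysis of $\Sp(2n,\bO)$-orbits. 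Combined with coisotropy, this finishes the proof.
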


\begin{proof}
We consider the auxiliary mirabolic affine Grassmannian $V_\bF\times\Gr_{\GL(2n)}$ and its cotangent
bundle $T^*V_\bF\times T^*\Gr_{\GL(2n)}\cong V_\bF\times V_\bF\times T^*\Gr_{\GL(2n)}$. It is equipped
with the moment map of the $\GL(2n,\bO)$-action
$\bmu_{\GL(2n)}\colon T^*V_\bF\times T^*\Gr_{\GL(2n)}\to\fgl(2n,\bO)^*$. The zero level
$\bmu_{\GL(2n)}^{-1}(0)$ is Lagrangian since the set of $\GL(2n,\bO)$-orbits
in the mirabolic affine Grassmannian $V_\bF\times\Gr_{\GL(2n)}$ is discrete.

We have an involution $\sigma\circlearrowright\GL(2n)$ whose fixed point set is
$\Sp(2n)\subset\GL(2n)$. It induces an involution of $\Gr_{\GL(2n)}$ and of $T^*\Gr_{\GL(2n)}$,
and we extend it to the same named involution
$\sigma\circlearrowright(T^*V_\bF\times T^*\Gr_{\GL(2n)}\cong V_\bF\times V_\bF\times T^*\Gr_{\GL(2n)})$
permuting the two factors $V_\bF$. The moment map $\bmu_{\GL(2n)}$ is $\sigma$-equivariant, and
the $\sigma$-fixed point set of $\bmu_{\GL(2n)}\colon T^*V_\bF\times T^*\Gr_{\GL(2n)}\to\fgl(2n,\bO)^*$
is nothing but $\bmu\colon V_\bF\times T^*\Gr_G\to\fg_\bO^*$.

Now since the zero level $\bmu_{\GL(2n)}^{-1}(0)$ is Lagrangian (and hence isotropic), the zero
level $\bmu^{-1}(0)$ is isotropic as well. Finally, $\bmu^{-1}(0)$ is coisotropic since
$V_\bF\times T^*\Gr_G$ is isomorphic to the cotangent bundle $T^*(V_\bF/V_\bO\times\Gr_G)$.
This isomorphism is {\em not} $G_\bO$-equivariant, but $\bmu^{-1}(0)$ is the union of torsors
over conormal bundles to the {\em relevant} $G_\bO$-orbits in $V_\bF/V_\bO\times\Gr_G$, hence
coisotropic.
\end{proof}

Recall that the irreducible components of $\bmu_{\GL(2n)}^{-1}(0)$ are numbered by {\em bisignatures}
$(\blambda,\bnu)=(\lambda_1\geq\ldots\geq\lambda_{2n},\ \nu_1\geq\ldots\geq\nu_{2n})$: a component
is the closure of the conormal bundle to the corresponding $\GL(2n,\bO)$-orbit in
$V_\bF\times\Gr_{\GL(2n)}$~\cite[Proposition 8]{fgt}. The component numbered by $(\blambda,\bnu)$
will be denoted $\Lambda^{(\blambda,\bnu)}_{\GL(2n)}$, and its open subset (the conormal bundle) will
be denoted $\oLambda^{(\blambda,\bnu)}_{\GL(2n)}$. For a signature
$\blambda=(\lambda_1\geq\ldots\geq\lambda_{2n})$ we denote by $\blambda^*$ the signature
$(-\lambda_{2n}\geq\ldots\geq-\lambda_1)$. Recall the involution $\sigma$ introduced in the proof
of~Lemma~\ref{lagrangian}. The next lemma follows immediately from definitions:

\begin{lem}
  \label{sigma}
  The involution $\sigma$ takes $\Lambda^{(\blambda,\bnu)}_{\GL(2n)}$ to
  $\Lambda^{(\blambda^*,\bnu^*)}_{\GL(2n)}$. \hfill $\Box$
\end{lem}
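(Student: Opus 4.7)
The plan is to verify the claim at the level of a generic point of the conormal bundle. Since $\Lambda^{(\blambda,\bnu)}_{\GL(2n)}$ is the closure of the open conormal bundle $\oLambda^{(\blambda,\bnu)}_{\GL(2n)}$ and $\sigma$ is biregular, it suffices to check that $\sigma$ sends $\oLambda^{(\blambda,\bnu)}_{\GL(2n)}$ onto $\oLambda^{(\blambda^*,\bnu^*)}_{\GL(2n)}$.

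First I would unwind $\sigma$. The involution $g\mapsto J^{-1}(g^t)^{-1}J$ of $\GL(2n)$ (with $J$ the symplectic form on $V$) has fixed point subgroup $\Sp(2n)$; on $\Gr_{\GL(2n)}$, viewed as the moduli of lattices in $V_\bF$, it acts by symplectic duality $L\mapsto L^\perp$. A standard relative-position computation gives that a lattice of position $\bnu=(\nu_1\ge\ldots\ge\nu_{2n})$ with respect to $V_\bO$ has dual of position $\bnu^*=(-\nu_{2n}\ge\ldots\ge-\nu_1)$. On $T^*V_\bF\cong V_\bF\oplus V_\bF$, obtained by identifying $V_\bF^*\cong V_\bF$ via the symplectic pairing, the extension of $\sigma$ is simply the swap of the two factors.

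Next, pick a standard representative $(v,L)$ of the $\GL(2n,\bO)$-orbit labelled by $(\blambda,\bnu)$, as in~\cite[Proposition 8]{fgt}. A generic point of $\oLambda^{(\blambda,\bnu)}_{\GL(2n)}$ above it is a tuple $(v,\xi,L,\eta)\in V_\bF\times V_\bF\times T^*\Gr_{\GL(2n)}$, where $(\xi,\eta)$ annihilates the tangent space to the orbit through $(v,L)$. Applying $\sigma$ yields $(\xi,v,L^\perp,\sigma\eta)$, whose projection to the base is $(\xi,L^\perp)$. A direct calculation in a symplectic basis shows that the annihilator condition $\langle\xi,\fgl(2n,\bO)\cdot v\rangle=0$ forces the relative position of $\xi$ with respect to $L^\perp$ to be exactly $\blambda^*$.

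Combining these, the $\sigma$-image of a generic point of $\oLambda^{(\blambda,\bnu)}_{\GL(2n)}$ is a generic point of $\oLambda^{(\blambda^*,\bnu^*)}_{\GL(2n)}$, and taking closures gives the lemma. The main step requiring a genuine check is the combinatorial identification of the type of $\xi$ relative to $L^\perp$, but given the completely self-dual setup of $\sigma$ (symplectic duality on lattices, swap on the two cotangent factors of $V_\bF$), this is essentially forced --- which is why the paper states the lemma as ``immediate from definitions''.
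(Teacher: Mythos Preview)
Your approach is correct and in fact more detailed than the paper's: the paper gives no proof at all, only the remark that the lemma ``follows immediately from definitions''. Your outline --- reduce to the open conormal, unwind $\sigma$ as symplectic duality $L\mapsto L^\perp$ on lattices together with the swap of the two $V_\bF$ factors, then identify the label of the image orbit --- is exactly the intended verification.

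One small imprecision: the condition you write as $\langle\xi,\fgl(2n,\bO)\cdot v\rangle=0$ is not by itself the conormal condition; the conormal to the $\GL(2n,\bO)$-orbit through $(v,L)$ is precisely the vanishing of the full moment map $\bmu_{\GL(2n)}(v,\xi,L,\eta)=0$, which couples $\xi$ and $\eta$. A cleaner way to phrase the argument is: since $\bmu_{\GL(2n)}$ is $\sigma$-equivariant, $\sigma$ permutes the irreducible components of $\bmu_{\GL(2n)}^{-1}(0)$, so it suffices to read off the label of the image. The lattice invariant $\blambda+\bnu$ (the type of $L$) goes to $(\blambda+\bnu)^*=\blambda^*+\bnu^*$, and the remaining invariant separating $\blambda$ from $\bnu$ --- essentially the relative position of $v$ with respect to the pair of lattices --- dualizes in the evident way under the swap $(v,\xi)\mapsto(\xi,v)$. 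This is what the paper means by ``immediate from definitions'' once one has the parametrization of~\cite[Proposition~8]{fgt} in hand.
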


The set of self-dual signatures $\blambda=\blambda^*$ is in bijection with the set of length $n$
partitions $\btheta=(\theta_1\geq\ldots\geq\theta_n\geq0)$; namely,
$\btheta\mapsto\blambda:=(\theta_1\geq\ldots\geq\theta_n\geq-\theta_n\geq\ldots\geq-\theta_1)$.
Given partitions $\btheta=(\theta_1\geq\ldots\geq\theta_n\geq0)$ and
$\bzeta=(\zeta_1\geq\ldots\geq\zeta_n\geq0)$ we consider the corresponding self-dual bisignature
\[(\blambda,\bnu):=(\theta_1\geq\ldots\geq\theta_n\geq-\theta_n\geq\ldots\geq-\theta_1,\
\zeta_1\geq\ldots\geq\zeta_n\geq-\zeta_n\geq\ldots\geq-\zeta_1)\]
and the corresponding $\GL(2n,\bO)$-orbit $\BO_{(\blambda,\bnu)}$ in $V_\bF\times\Gr_{\GL(2n)}$.

\begin{lem}
  \label{irreducible}
  The fixed point set of $\sigma$ in the conormal bundle
  $\oLambda^{(\blambda,\bnu)}_{\GL(2n)}$ to $\BO_{(\blambda,\bnu)}$ is irreducible.
\end{lem}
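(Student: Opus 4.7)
The plan is to reduce irreducibility of $(\oLambda^{(\blambda,\bnu)}_{\GL(2n)})^\sigma$ to irreducibility of the base $(\BO_{(\blambda,\bnu)})^\sigma \subset V_\bF \times \Gr_{\GL(2n)}$, and then to show that the latter is a single $\Sp(2n,\bO)$-orbit inside the symplectic mirabolic affine Grassmannian $V_\bF \times \Gr_G$. Since $\oLambda^{(\blambda,\bnu)}_{\GL(2n)}$ is a vector bundle over $\BO_{(\blambda,\bnu)}$ on which $\sigma$ acts linearly (permuting the two $V_\bF$-factors and acting $\sigma$-equivariantly on $T^*\Gr_{\GL(2n)}$), its $\sigma$-fixed subscheme is a vector subbundle of the restriction to $(\BO_{(\blambda,\bnu)})^\sigma$, so irreducibility of the base fixed set forces irreducibility of the total fixed set.

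First I would exhibit an explicit $\sigma$-fixed base point in $\BO_{(\blambda,\bnu)}$, using the self-duality $(\blambda,\bnu)=(\blambda^*,\bnu^*)$ and the associated partitions $(\btheta,\bzeta)$. Pick a symplectic basis $e_1,\dots,e_{2n}$ of $V$ with $\langle e_i,e_{2n+1-j}\rangle=\delta_{ij}$; then the lattice spanned by $\{t^{-\theta_i}e_i,\,t^{\theta_i}e_{2n+1-i}\}_{i=1}^n$, together with the vector $\sum_i(t^{-\zeta_i}e_i+t^{\zeta_i}e_{2n+1-i})$, furnishes such a base point. This shows that $(\BO_{(\blambda,\bnu)})^\sigma$ is nonempty, stable under $\Sp(2n,\bO)=\GL(2n,\bO)^\sigma$, and lies inside $V_\bF\times\Gr_G$.

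The heart of the argument is to show $(\BO_{(\blambda,\bnu)})^\sigma$ is a single $\Sp(2n,\bO)$-orbit. Fix a $\sigma$-fixed point $p_0$ and set $H=\on{Stab}_{\GL(2n,\bO)}(p_0)$, a $\sigma$-stable subgroup. Standard nonabelian $\sigma$-cohomology identifies the set of $\Sp(2n,\bO)$-orbits in $(\GL(2n,\bO)\cdot p_0)^\sigma$ with $\ker\bigl(H^1(\sigma,H)\to H^1(\sigma,\GL(2n,\bO))\bigr)$. Triviality of $H^1(\sigma,\GL(2n,\bO))$ reflects the $\sigma$-connectedness of the symmetric variety $\GL(2n)/\Sp(2n)$ over $\bO$, so the task reduces to showing $H^1(\sigma,H)=\{*\}$. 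Using the elementary-divisor description of $p_0$, $H$ fits in an extension of a product of standard classical groups over $\bO$ by a pro-unipotent radical; each block is $\sigma$-connected or admits an explicit $\sigma$-splitting, yielding the desired vanishing.

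The main obstacle is precisely this cohomological vanishing: ruling out ``extra'' $\Sp(2n,\bO)$-orbits sitting above a single $\GL(2n,\bO)$-orbit. A cleaner alternative I would pursue in parallel is to classify $\Sp(2n,\bO)$-orbits on $V_\bF\times\Gr_G$ directly by pairs of partitions $(\btheta,\bzeta)$ (via symplectic elementary divisors of the lattice, plus symplectic invariants of the mirabolic vector mod the lattice), and check that the natural map from these pairs to self-dual bisignatures $(\blambda,\bnu)$ is a bijection matching the base points constructed above. Combined with the vector-bundle reduction in the first paragraph, either route yields the asserted irreducibility.
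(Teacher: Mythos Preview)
Your overall strategy is sound, but you take a harder route than necessary. You reduce to showing that $(\BO_{(\blambda,\bnu)})^\sigma$ is a single $\Sp(2n,\bO)$-orbit, and propose to do this via vanishing of $H^1(\sigma,H)$ for the full mirabolic stabilizer $H$. That vanishing is believable, but your block-by-block sketch is underspecified: the mirabolic stabilizer has a genuinely nontrivial filtration (Levi pieces of type $\GL$ and $\Sp$, plus a pro-unipotent piece on which $\sigma$ does not act in an obviously split way), and one must check the vanishing at each stage and that the connecting maps behave. Your alternative---a direct classification of $\Sp(2n,\bO)$-orbits on $V_\bF\times\Gr_G$---would also work, but amounts to reproving from scratch something stronger than what the lemma needs.

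The paper sidesteps all of this by projecting one step further, to the plain Grassmannian orbit $\Gr^{\blambda+\bnu}_{\GL(2n)}$ rather than to the mirabolic orbit $\BO_{(\blambda,\bnu)}$. There the $\sigma$-fixed locus is classically known to be the single $\Sp(2n,\bO)$-orbit $\Gr^{\btheta+\bzeta}_G$, hence irreducible. The fiber of $\oLambda^{(\blambda,\bnu)}_{\GL(2n)}$ over a point of $\Gr^{\blambda+\bnu}_{\GL(2n)}$ is open in a vector space (the mirabolic direction contributes an open piece of a lattice, and the conormal direction is linear), so the $\sigma$-fixed points of each fiber are open in a linear subspace and automatically irreducible. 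Irreducibility of the total fixed set follows.

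So both arguments are valid, but the paper's buys simplicity by using only the classical fact about $\sigma$-fixed points in $\Gr_{\GL(2n)}$, while yours would establish the stronger statement that $(\BO_{(\blambda,\bnu)})^\sigma$ is a single $\Sp(2n,\bO)$-orbit---a fact the paper only gets to later, via explicit representatives---at the cost of the cohomological bookkeeping.
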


\begin{proof}
The orbit $\BO_{(\blambda,\bnu)}$ projects onto the $\GL(2n,\bO)$-orbit $\Gr^{\blambda+\bnu}_{\GL(2n)}$
in the affine Grassmannian $\Gr_{\GL(2n)}$. Any fiber of this projection is open in an appropriate
lattice in $V_\bF$. Hence any fiber of the projection $\varPi$ of
$\oLambda^{(\blambda,\bnu)}_{\GL(2n)}$ to $\Gr^{\blambda+\bnu}_{\GL(2n)}$ is open in
an appropriate vector space.
Now the fixed point set of $\sigma$ in $\Gr^{\blambda+\bnu}_{\GL(2n)}$ is the
corresponding $\Sp(2n,\bO)$-orbit $\Gr^{\btheta+\bzeta}_G$ in $\Gr_G$; in particular, it is irreducible.
The fixed point set of $\sigma$ in the fiber of $\varPi$ over a point of $\Gr^{\btheta+\bzeta}_G$
is irreducible as well. The irreducibility of the fixed point set
$\big(\oLambda^{(\blambda,\bnu)}_{\GL(2n)}\big)^\sigma$ follows.
\end{proof}

We define an irreducible component $\Lambda^{(\btheta,\bzeta)}\subset\bmu^{-1}(0)$ as the closure
of the fixed point set $\big(\oLambda^{(\blambda,\bnu)}_{\GL(2n)}\big)^\sigma$
of~Lemma~\ref{irreducible}.

\begin{prop}
  \label{classification}
  Any irreducible component of $\bmu^{-1}(0)$ is of the form $\Lambda^{(\btheta,\bzeta)}$ for a
  pair of length $n$ partitions $(\btheta,\bzeta)$.
\end{prop}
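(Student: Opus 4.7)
The strategy is to exploit the identification $\bmu^{-1}(0) = \bigl(\bmu_{\GL(2n)}^{-1}(0)\bigr)^\sigma$ established in the proof of Lemma~\ref{lagrangian}, combined with the bisignature classification of the irreducible components of $\bmu_{\GL(2n)}^{-1}(0)$ recalled just before Lemma~\ref{sigma}. The point is that every irreducible component of $\bmu^{-1}(0)$ will be extracted as the $\sigma$-fixed locus of a $\sigma$-stable irreducible component of $\bmu_{\GL(2n)}^{-1}(0)$, and $\sigma$-stable components correspond precisely to self-dual bisignatures, i.e.\ to pairs of length-$n$ partitions.

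Let $Z \subset \bmu^{-1}(0)$ be an irreducible component. Irreducibility forces $Z$ to lie inside some $\Lambda^{(\blambda,\bnu)}_{\GL(2n)}$, and the $\sigma$-invariance of $Z$ together with Lemma~\ref{sigma} gives
\[
Z \subset \Lambda^{(\blambda,\bnu)}_{\GL(2n)} \cap \Lambda^{(\blambda^*,\bnu^*)}_{\GL(2n)}.
\]
The critical step, and what I expect to be the main obstacle, is to deduce that the bisignature is self-dual, i.e.\ $(\blambda,\bnu) = (\blambda^*,\bnu^*)$. Both $\bmu^{-1}(0)$ and $\bmu_{\GL(2n)}^{-1}(0)$ are pure-dimensional Lagrangians in their ambient symplectic ind-schemes (by Lemma~\ref{lagrangian} and its $\GL$-counterpart used inside the proof of that lemma), and $\sigma$ acts symplectically on the ambient of $\bmu_{\GL(2n)}^{-1}(0)$. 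One then needs to show that the $\sigma$-fixed locus of a $\sigma$-stable Lagrangian component attains exactly the Lagrangian dimension of $\bmu^{-1}(0)$, while the intersection $\Lambda^{(\blambda,\bnu)}_{\GL(2n)} \cap \Lambda^{(\blambda^*,\bnu^*)}_{\GL(2n)}$ of two distinct components has strictly smaller dimension, which is incompatible with $Z$ being a component of the Lagrangian $\bmu^{-1}(0)$. I would carry this dimension comparison out by restricting to bounded substacks $\Gr_G^{\leq\mu}$ of finite type, where pointwise linear-algebraic arguments on smooth loci (the $+1$-eigenspace decomposition of $\sigma$ acting on tangent spaces) apply, and then bootstrapping to the full ind-scheme.

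Granted self-duality, $(\blambda,\bnu)$ corresponds via the bijection recalled just before Lemma~\ref{irreducible} to a pair of length-$n$ partitions $(\btheta,\bzeta)$, so that $Z$ lies in $\bigl(\Lambda^{(\blambda,\bnu)}_{\GL(2n)}\bigr)^\sigma$. By Lemma~\ref{irreducible} the open piece $\bigl(\oLambda^{(\blambda,\bnu)}_{\GL(2n)}\bigr)^\sigma$ is irreducible, and its closure is by definition $\Lambda^{(\btheta,\bzeta)}$; by maximality of $Z$ we conclude $Z = \Lambda^{(\btheta,\bzeta)}$, as required.
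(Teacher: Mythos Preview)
Your overall strategy matches the paper's: embed the given component $Z$ in some $\GL$-component $\Lambda^{(\blambda,\bnu)}_{\GL(2n)}$, show that component must be $\sigma$-stable (hence the bisignature self-dual), and then conclude via Lemma~\ref{irreducible}. The paper executes the self-duality step by isolating a clean linear-algebra lemma (Lemma~\ref{elementary} in the text): if $\Lambda\subset U$ is Lagrangian in a symplectic vector space with symplectic involution $\sigma$, and $\Lambda^\sigma$ happens to be Lagrangian in $U^\sigma$, then $\sigma(\Lambda)=\Lambda$. This is then applied fiberwise over a point $g\in\Gr_G^{\btheta}\subset\Gr_{\GL(2n)}^{\blambda}$, where $U$ is the fiber of $T^*V_\bF\times T^*\Gr_{\GL(2n)}$ modulo the conormal $W'$ to the $\GL(2n,\bO)$-orbit, the $\GL$-components give Lagrangians $\Lambda\subset U$, and the fiber of $Z$ gives the Lagrangian $\Lambda^\sigma\subset U^\sigma$.

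Your dimension-comparison framing, as written, does not quite work. You argue that the intersection $\Lambda^{(\blambda,\bnu)}_{\GL(2n)}\cap\Lambda^{(\blambda^*,\bnu^*)}_{\GL(2n)}$ of two distinct components has strictly smaller dimension and that this is incompatible with $Z$ being a component of $\bmu^{-1}(0)$. But these dimensions live in different ambient symplectic spaces: the intersection drops dimension inside the $\GL$-picture, whereas $Z$ is Lagrangian in the smaller $\Sp$-picture, and a proper intersection of two $\GL$-Lagrangians could in principle still contain a subset of the full $\Sp$-Lagrangian dimension in its $\sigma$-fixed locus. The genuine obstruction is the contrapositive of Lemma~\ref{elementary}: for a Lagrangian $\Lambda\subset U$ that is \emph{not} $\sigma$-stable, the fixed locus $\Lambda^\sigma$ is strictly isotropic in $U^\sigma$. (Proof sketch: since $\Lambda^\sigma$ Lagrangian in $U^\sigma=U^+$ forces $\Lambda\subset(\Lambda^\sigma)^\perp=\Lambda^\sigma\oplus U^-$, and any subspace of $\Lambda^\sigma\oplus U^-$ containing $\Lambda^\sigma$ is automatically $\sigma$-stable.) Your parenthetical remark that the argument reduces to ``pointwise linear-algebraic arguments on smooth loci'' is on target, and that pointwise argument is exactly this lemma; you should replace the global dimension count by it.
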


We start the proof with the following elementary linear algebraic lemma.

\begin{lem}
  \label{elementary}
  Let $\Lambda\subset U$ be a Lagrangian subspace of a symplectic vector space $U$. Let
  $\sigma\circlearrowright U$ be an involution respecting the symplectic form, but not necessarily
  preserving $\Lambda$. Suppose the invariant subspace $\Lambda^\sigma$ is Lagrangian in the
  symplectic vector space $U^\sigma$. Then $\sigma(\Lambda)=\Lambda$.

  Conversely, if $\sigma(\Lambda)=\Lambda$, then $\Lambda^\sigma$ is Lagrangian in $U^\sigma$.
  \hfill $\Box$
\end{lem}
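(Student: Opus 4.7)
My plan is to exploit the $\sigma$-eigenspace decomposition $U = U^\sigma \oplus U^{-\sigma}$ and study the projection of $\Lambda$ onto $U^\sigma$. Since $\sigma$ preserves the symplectic form, the two eigenspaces are mutually symplectically orthogonal (from $\langle u,v\rangle = \langle \sigma u, \sigma v\rangle = -\langle u,v\rangle$ for $u \in U^\sigma$, $v \in U^{-\sigma}$), so this is a symplectic direct sum and each summand is itself a symplectic vector space. The key construction is the projection $\pi^+\colon U \to U^\sigma$, $u \mapsto (u+\sigma u)/2$, together with the complementary projection $\pi^-\colon U \to U^{-\sigma}$, $u \mapsto (u-\sigma u)/2$.

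The central step will be to show that $\pi^+(\Lambda) \subseteq \Lambda^\sigma$. For any $u \in \Lambda$ and $v \in \Lambda^\sigma$, using $\sigma$-invariance of the form and $\sigma v = v$, a short calculation will give $\langle \pi^+(u), v\rangle = \langle u,v\rangle$, which vanishes because $\Lambda$ is isotropic. Hence $\pi^+(\Lambda)$ lies in the symplectic perpendicular of $\Lambda^\sigma$ inside $U^\sigma$; by the hypothesis that $\Lambda^\sigma$ is Lagrangian in $U^\sigma$, this perpendicular coincides with $\Lambda^\sigma$ itself, yielding $\pi^+(\Lambda) \subseteq \Lambda^\sigma \subseteq \Lambda$.

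Once this is in hand, the conclusion follows immediately: for any $u \in \Lambda$, both $\pi^+(u)$ and $\pi^-(u) = u - \pi^+(u)$ lie in $\Lambda$, and $\pi^-(u) \in U^{-\sigma}$, so $\Lambda$ decomposes as the direct sum of its intersections with the two $\sigma$-eigenspaces. This decomposition is manifestly $\sigma$-stable, so $\sigma(\Lambda) = \Lambda$.

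There is no serious obstacle here — the content is purely linear-algebraic. The only subtlety to track is that the Lagrangian hypothesis on $\Lambda^\sigma$ in $U^\sigma$ is used at a single essential point, namely to upgrade the conclusion ``$\pi^+(\Lambda)$ is symplectically orthogonal to $\Lambda^\sigma$ in $U^\sigma$'' to the stronger containment $\pi^+(\Lambda) \subseteq \Lambda^\sigma$; without this upgrade one would only deduce that $\Lambda^\sigma$ is isotropic in $U^\sigma$, which is insufficient to pull the $+1$-eigenpart of $u$ back into $\Lambda$.
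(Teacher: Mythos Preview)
Your proof is correct. The paper itself supplies no argument for this lemma---it is stated with a terminal $\Box$ and left to the reader as an elementary linear-algebra exercise---so there is nothing to compare; your eigenspace decomposition and the key observation that $\pi^+(\Lambda)\subseteq(\Lambda^\sigma)^{\perp_{U^\sigma}}=\Lambda^\sigma$ constitute exactly the sort of short verification the authors had in mind.
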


To prove the proposition, we fix a point $g$ in an $\Sp(2n,\bO)$-orbit
$\Gr^\btheta_G\subset\Gr^\blambda_{\GL(2n)}$, and consider the fiber $W$ of $T^*V_\bF\times T^*\Gr_{\GL(2n)}$
over $g$. It contains the conormal $W'$ to the $\GL(2n,\bO)$-orbit $\Gr^\blambda_{\GL(2n)}$, and the quotient
$W/W'$ is symplectic. The involution $\sigma$ of the proof of~Lemma~\ref{lagrangian} acts on $W/W'$.
Now we apply~Lemma~\ref{elementary} to $U:=W/W'$ and to a Lagrangian subspace
$\Lambda^\sigma\subset U^\sigma$ arising from the fiber over $g$ of an irreducible component of
$\bmu^{-1}(0)$. Then the lemma guarantees that $\Lambda\subset U$ arises from the fiber over $g$ of
a $\sigma$-invariant irreducible component of $\bmu^{-1}_{\GL(2n)}(0)$.

The proposition is proved. \hfill $\Box$

\bigskip

We can view irreducibles in $\CalD\CW\modu^{G_\bO,\lc,\heartsuit}$ as irreducible $G_\bO$-equivariant
$D$-modules on $(V_\bF/V_\bO)\times\Gr_G$. Proposition~\ref{classification} implies that among
the uncountably many $\Sp(2n,\bO)$-orbits in $(V_\bF/V_\bO)\times\Gr_G$,
only those orbits whose conormal bundle closures are of type $\Lambda^{(\btheta,\bzeta)}$
(so called {\em relevant} orbits) carry irreducibles in $\CalD\CW\modu^{G_\bO,\lc,\heartsuit}$.
For example, let $E_0\in\CalD\CW\modu^{G_\bO,\lc,\heartsuit}$ stand for the irreducible module
$\IC_0\otimes\BC[V_\bO]$. Then the conormal bundle to its support is the irreducible component
$\Lambda^{(0,0)}$. More generally, we view a length $n$ partition $\btheta$ (resp.\ $\bzeta$) as a
dominant weight of $\Sp(V)$ (resp.\ of $\SO(V_0)$), and consider the corresponding irreducible
$D$-modules $\IC_\btheta\in\mathrm{D}\modu_{1/2}^{G_\bO,\lc,\heartsuit}(\Gr_G)$
(resp.\ $\IC_\bzeta\in\mathrm{D}\modu^{G_\bO,\lc,\heartsuit}(\Gr_G)$)
corresponding to $V_\btheta\in\Rep(\Sp(V))$ (resp.\ to $V_\bzeta\in\Rep(\SO(V_0))$) under the
(twisted) Satake equivalence of~\S\ref{satake}. Then the support of
$\IC_\bzeta*E_0*\IC_\btheta$ is the closure of the relevant $\Sp(2n,\bO)$-orbit
$\BO_{(\btheta,\bzeta)}\subset(V_\bF/V_\bO)\times\Gr_G$
whose conormal bundle closure is the component $\Lambda^{(\btheta,\bzeta)}$.

\begin{prop}
  \label{irreducibles}
For any length $n$ partitions $\btheta,\bzeta$, there is a unique irreducible module $\IC_{(\btheta,\bzeta)}$
in $\CalD\CW\modu^{G_\bO,\lc,\heartsuit}$ whose support is the closure of $\BO_{(\btheta,\bzeta)}$.
\end{prop}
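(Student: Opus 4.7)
The plan is to prove existence and uniqueness separately; both can be read off of facts already established above.

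\textbf{Existence.} The discussion immediately preceding the statement shows that the convolution $\IC_\bzeta * E_0 * \IC_\btheta \in \CalD\CW\modu^{G_\bO,\lc}$ has support equal to $\overline{\BO_{(\btheta,\bzeta)}}$. By Lemma~\ref{lagrangian} the characteristic variety of any object in $\CalD\CW\modu^{G_\bO,\lc}$ lies in the Lagrangian $\bmu^{-1}(0)$, so this convolution is holonomic and of finite length. Because its set-theoretic support is the whole closure, at least one composition factor must be irreducible with support exactly $\overline{\BO_{(\btheta,\bzeta)}}$; this is the desired $\IC_{(\btheta,\bzeta)}$.

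\textbf{Uniqueness.} Any irreducible $\CF\in\CalD\CW\modu^{G_\bO,\lc,\heartsuit}$ supported on $\overline{\BO_{(\btheta,\bzeta)}}$ is the minimal (``intermediate'') extension of its restriction to the smooth open orbit $\BO_{(\btheta,\bzeta)}$, and that restriction is an irreducible $G_\bO$-equivariant $1/2$-twisted local system on the orbit. The existence of $\IC_{(\btheta,\bzeta)}$ just constructed shows that the twisting cocycle is unobstructed along the orbit, so such local systems form a torsor over the set of irreducible characters of the component group of the $\Sp(2n,\bO)$-stabilizer of a point $p\in\BO_{(\btheta,\bzeta)}$. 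It therefore suffices to check that this stabilizer is pro-connected.

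\textbf{Main step.} I would identify $\Stab_{\Sp(2n,\bO)}(p)$ with the $\sigma$-fixed subgroup of $\Stab_{\GL(2n,\bO)}(p')$, where $p'\in\BO_{(\blambda,\bnu)}\subset V_\bF\times\Gr_{\GL(2n)}$ corresponds to $p$ under the $\sigma$-fixed point description of~\S\ref{2.6} and $(\blambda,\bnu)$ is the self-dual bisignature built from $(\btheta,\bzeta)$. Truncating at level $t^N$ for $N\gg 0$ reduces us to a finite-dimensional situation in which the $\GL$-stabilizer is a connected algebraic group --- a classical feature of mirabolic $\GL$-orbits, cf.~\cite{fgt} --- so Steinberg's theorem on the fixed point subgroup of a semisimple automorphism yields connectedness of the $\sigma$-fixed subgroup at each level, whence pro-connectedness of $\Stab_{\Sp(2n,\bO)}(p)$ after passing to the limit in $N$.

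\textbf{Main obstacle.} The principal difficulty is careful handling of the ind-scheme structure of the stabilizers, so that Steinberg's theorem applies uniformly at each truncation level and the connectedness statements are compatible with the transition maps as $N$ grows; once this technicality is in hand, the rest of the argument is direct and uniqueness follows.
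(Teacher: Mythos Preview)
Your existence argument is fine, and you correctly reduce uniqueness to connectedness of the stabilizer. The Main Step, however, has two genuine gaps.

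First, the involution $\sigma$ does not act on the $\GL$-mirabolic Grassmannian $V_\bF\times\Gr_{\GL(2n)}$ compatibly with the $\GL(2n,\bO)$-action. On $\GL(2n)$ one has $\sigma(g)=J(g^t)^{-1}J^{-1}$; a compatible $\sigma$-action on the standard representation $V$ would have to intertwine $V$ with $V^*$, so no such action on $V_\bF$ exists. In~\S\ref{2.6} the involution acts on the \emph{cotangent} $V_\bF\times V_\bF\times T^*\Gr_{\GL(2n)}$ by swapping the two $V_\bF$ factors, not on the base. Hence there is no $\sigma$-fixed $p'\in V_\bF\times\Gr_{\GL(2n)}$ to speak of, and the proposed identification $\Stab_{\Sp(2n,\bO)}(p)=\big(\Stab_{\GL(2n,\bO)}(p')\big)^\sigma$ is not well-posed.

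Second, even if such an identification were available, Steinberg's connectedness theorem requires the ambient group to be simply connected semisimple, which the mirabolic $\GL$-stabilizers are not. For a semisimple automorphism of a merely connected group the fixed subgroup can be disconnected: already $(\BG_m)^{x\mapsto x^{-1}}=\mu_2$, and $\GL_n^{\,g\mapsto(g^t)^{-1}}=\mathrm{O}_n$. So the appeal to Steinberg does not go through, and any salvage would require analyzing the Levi factors directly --- which is essentially what the paper does.

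The paper's proof is a short explicit computation: it writes down a representative $(v_{\btheta,\bzeta},L_{\btheta,\bzeta})$ of the orbit, observes that $\Stab_{\Sp(2n,\bO)}(L_{\btheta,\bzeta})$ is the semidirect product of its connected pro-unipotent radical with a Levi $L\simeq\Sp(2m_0)\times\prod_{i>0}\GL(m_i)$ (where $\btheta+\bzeta=(0^{m_0}1^{m_1}\cdots)$), and then checks that the further stabilizer of $v_{\btheta,\bzeta}$ in $L$ is a product of mirabolic subgroups of the $\GL(m_i)$-factors, each of which is connected.
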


\begin{proof}
We have to check that the stabilizer in $\Sp(2n,\bO)$ of a point in $\BO_{(\btheta,\bzeta)}$ is connected.
The argument is similar to the proof of~\cite[Lemma 2.3.3]{bft}.
Namely, a representative of the orbit $\BO_{(\btheta,\bzeta)}$ is given by
\begin{multline*}
  v_{\btheta,\bzeta}:=t^{-\theta_1}e_1+\ldots+t^{-\theta_n}e_n\in V_\bF/V_\bO;\\
L_{\btheta,\bzeta}:=\bO t^{-\theta_1-\zeta_1}e_1\oplus\ldots\oplus\bO t^{-\theta_n-\zeta_n}e_n\oplus
\bO t^{\theta_n+\zeta_n}e_{n+1}\oplus\ldots\oplus\bO t^{\theta_1+\zeta_1}e_{2n}\in \Gr_G.
\end{multline*}
Here we chose a basis $e_1,\ldots,e_{2n}$ of $V$ such that $\langle e_i,e_{2n+1-j}\rangle=\delta_{ij}$
for $1\leq i,j\leq n$.

The stabilizer $\on{Stab}_{\Sp(2n,\bO)}(L_{\btheta,\bzeta})$ is the semidirect product of its
connected unipotent radical and the reductive part
$\on{Stab}_{\Sp(2n,\bO)}^{\on{red}}(L_{\btheta,\bzeta})$ isomorphic to a Levi subgroup $L\subset G$.
More precisely, we write the partition $\btheta+\bzeta$ in the form $(0^{m_0}1^{m_1}\ldots)$
(where almost all multiplicities $m_i$ are zero). Then $L\simeq\Sp(2m_0)\times\prod_{i>0}\GL(m_i)$.
Now we have to find the stabilizer of $v_{\btheta,\bzeta}$ in $\on{Stab}_{\Sp(2n,\bO)}(L_{\btheta,\bzeta})$.
The stabilizer of $v_{\btheta,\bzeta}$ in the unipotent radical of
$\on{Stab}_{\Sp(2n,\bO)}(L_{\btheta,\bzeta})$ is connected, and it remains to check that the
stabilizer of $v_{\btheta,\bzeta}$ in $\on{Stab}_{\Sp(2n,\bO)}^{\on{red}}(L_{\btheta,\bzeta})\simeq L$
is connected.

The latter stabilizer is the product of stabilizers of certain vectors (summands of
$v_{\btheta,\bzeta}$, like e.g.\ $e_1+e_2+\ldots+e_{m_{i_{\on{max}}}}$) in the GL-factors of
$L\simeq\Sp(2m_0)\times\prod_{i>0}\GL(m_i)$. Finally, the stabilizer of a vector in
$\GL(m_i)$ is a connected mirabolic subgroup.
\end{proof}

\begin{rem}
  In fact, we will see that
  $\IC_\bzeta*E_0*\IC_\btheta\simeq\IC_{(\btheta,\bzeta)}$ in~Corollary~\ref{irreducibility} below.
\end{rem}

\subsection{De-equivariantized Ext algebra}
\label{deeq}
Recall the notation of the end of~\S\ref{sosp}. For dominant weights $\lambda_0\in\Lambda_0^+$ and
$\lambda_1\in\Lambda_1^+$, we denote by $\IC_{\lambda_0}\in\mathrm{D}\modu^{G_\bO,\lc,\heartsuit}(\Gr_G)$
(resp.\ by $\IC_{\lambda_1}\in\mathrm{D}\modu_{1/2}^{G_\bO,\lc,\heartsuit}(\Gr_G)$) the irreducible $D$-modules
corresponding to $V_{\lambda_0}\in\Rep(\SO(V_0))$ (resp.\ to $V_{\lambda_1}\in\Rep(\Sp(V))$) under the
(twisted) Satake equivalence of~\S\ref{satake}.

Also, recall that $E_0\in\CalD\CW\modu^{G_\bO,\lc,\heartsuit}$ stands for the irreducible module
$\IC_0\otimes\BC[V_\bO]$.

We restrict the action
\[\mathrm{D}\modu^{G_\bO,\lc}(\Gr_G)\otimes
\mathrm{D}\modu_{1/2}^{G_\bO,\lc}(\Gr_G)\circlearrowright\CalD\CW\modu^{G_\bO,\lc}\]
of~\S\ref{mirab} to the action of \[\mathrm{D}\modu^{G_\bO,\lc,\heartsuit}(\Gr_G)\otimes
\mathrm{D}\modu_{1/2}^{G_\bO,\lc,\heartsuit}(\Gr_G)\cong\Rep(\SO(V_0))\otimes\Rep(\Sp(V)).\]
Let $\CalD\CW\modu^{G_\bO,\deeq}$ denote the corresponding de-equivariantized category, see~\cite{1-aff}.
We have
\begin{multline}
  \label{def deeq}
  \RHom_{\CalD\CW\modu^{G_\bO,\deeq}}(\CF,\CG)\\
  =\bigoplus_{\lambda_0\in\Lambda_0^+,\ \lambda_1\in\Lambda_1^+}\RHom_{\CalD\CW\modu^{G_\bO,\lc}}
  (\CF,\IC_{\lambda_0}*\CG*\IC_{\lambda_1})\otimes V_{\lambda_1}^*\otimes V_{\lambda_0}^*.
\end{multline}

\begin{lem}
  \label{formality}
  The dg-algebra $\RHom_{\CalD\CW\modu^{G_\bO,\deeq}}(E_0,E_0)$ is formal, i.e.\ it is
  quasiisomorphic to the graded algebra $\Ext_{\CalD\CW\modu^{G_\bO,\deeq}}^\bullet(E_0,E_0)$ with trivial
  differential.
\end{lem}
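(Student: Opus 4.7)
The plan is to refine the cohomological grading on the dg-algebra $\RHom_{\CalD\CW\modu^{G_\bO,\deeq}}(E_0,E_0)$ by a second grading coming from loop rotation, and then invoke the standard purity-implies-formality principle: if every cohomology class carries a weight that matches its cohomological degree, all Massey products are forbidden by weights and must vanish.

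First I would unpack \eqref{def deeq}, so that it suffices to treat each summand $\RHom_{\CalD\CW\modu^{G_\bO,\lc}}(E_0,\IC_{\lambda_0}*E_0*\IC_{\lambda_1})$ simultaneously, compatibly with the convolution product from fusion and Satake. Passing through Lemma~\ref{heis} and the Riemann--Hilbert equivalence, I would replace $\CalD\CW\modu^{G_\bO,\lc}$ with the constructible Kirillov realization $\CK ir_{\on{constr}}$ of Section~\ref{constr}, where $E_0$ becomes the $\chi$-equivariant IC extension of the constant sheaf on the zero-stratum of $\Gr_G\times\on{Heis}$, and convolutions with $\IC_{\lambda_0}, \IC_{\lambda_1}$ remain pure after semisimplification. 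At this point the individual Hom-spaces are equivariant Ext groups between pure perverse sheaves on an ind-scheme, and the algebra structure comes from the natural convolution and fusion apparatus.

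The main obstacle, as flagged in the introduction, is that the $\CalD^{1/2}$-twist makes the equivariant De Rham cohomology identically zero, so one cannot compute these Ext groups directly as equivariant cohomology. Following the endoscopic argument of~\cite{dlyz} that produces \eqref{glob coh}, I would tensor (over $\overset{!}{\otimes}$) with the theta-sheaf $\CR=\RT_!^{-1}(\varTheta)$, which cancels the twist and converts each Hom-space, as a module over $H^\bullet_{G_\bO,\DR}(\Gr_G)\cong\BC[\Xi_\fg]$, into an explicit coherent sheaf on $\Xi_\fg$ obtained via the Kostant--Whittaker reduction functor $\bkappa^\fg$. Summing over $\lambda_0,\lambda_1$ and tracking through the convolution structure then identifies the underlying cohomology algebra with a manifestly $\Sp(V)\times\SO(V_0)$-equivariant graded algebra on the coherent side.

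Finally, formality follows by equivariance under the loop-rotation $\Gm\circlearrowright\Gr_G$, which commutes with all the structures used above and equips the whole dg-algebra with an additional weight grading. Because the theta-sheaf $\varTheta$ and each $\IC_{\lambda_0},\IC_{\lambda_1}$ are pure, the weight and cohomological gradings on the Ext algebra agree in the sense that each cohomological degree sits in a single weight line. Any nonzero Massey product would mix cohomological degrees while preserving total weight, giving a contradiction; so the dg-algebra is formal. The chief technical point to verify is that the theta-sheaf replacement is compatible with the convolution-and-fusion algebra structure on $\Ext^\bullet$, which follows from the monoidality of $\bbeta^\fg$ together with the explicit form of~\eqref{glob coh}.
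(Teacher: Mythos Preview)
Your overall strategy---endow the dg-algebra with an auxiliary grading that forbids higher Massey products---is the right one, but the source of that grading is wrong, and the theta-sheaf detour is misplaced here.

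The paper does not use loop rotation. After passing to the constructible Kirillov model (as you do), it changes base to $\BF_q$: one replaces $\Gr_G$ by the $\mu_2$-gerbe $\widetilde\Gr_G$ of square roots of the determinant bundle, works with genuine $\ell$-adic sheaves there (so the half-twist becomes an honest sheaf-theoretic datum), and replaces the exponential $D$-module by the Artin--Schreier sheaf $\psi$. Then each $\IC_{\lambda_0}*E_0*\IC_{\lambda_1}$ acquires a pure Weil structure, and Deligne's purity-implies-formality argument runs exactly as in~\cite[Lemma~3.9.1]{bfgt}. Loop rotation, by contrast, is a connected group action: it decomposes equivariant Ext-groups into $\BG_m$-weight spaces, but there is no a priori reason this decomposition is diagonal against cohomological degree. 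Your assertion that ``each cohomological degree sits in a single weight line'' is precisely the purity statement one needs, and it does not follow from $\BG_m$-equivariance alone---it requires Frobenius (or Hodge) weights, which is what the paper supplies.

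Separately, tensoring with $\CR$ via~\eqref{glob coh} is not part of the formality argument. That device is used later (Corollary~\ref{cohomol}, Lemma~\ref{injective}) to \emph{compute} the Ext-groups once formality is already in hand; identifying the cohomology of a dg-algebra with an explicit graded ring says nothing about its $A_\infty$-structure. So the middle paragraph of your proposal, while relevant to the sections that follow, does not bear on this lemma.
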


\begin{proof}
  The argument is the same as in the proof of~\cite[Lemma 3.9.1]{bfgt}. The only difference is in
  the process of changing the setting to the base field $\BF_q$. To this end we apply the equivalence
  $\CK ir_{\on{constr}}\cong\CalD\CW\modu^{G_\bO,\lc}$ of the end of~\S\ref{constr}. In $\CK ir_{\on{constr}}$
  we can change the setting to the base field $\BF_q$, and again apply the equivalence
  $\CK ir_{\on{constr},\BF_q}\cong
  D^{G_\bO\ltimes V_\bO\times\BG_a,\psi,\lc}_{\on{constr}}(\widetilde\Gr_G\times\on{Heis})_{\BF_q}$.
  Here $\widetilde\Gr_G$ stands for the $\mu_2$-gerbe of square roots of the determinant line
  bundle on $\Gr_G$ (and we consider only the genuine sheaves: such that $-1\in\mu_2$ acts by $-1$),
  and $\psi$ stands for the Artin-Schreier character sheaf on $\BG_a$. Now the sheaves
  $\IC_{\lambda_0}*E_0*\IC_{\lambda_1}$ can be equipped with a pure Weil structure, and the
  rest of the argument is identical to the one in the proof of~\cite[Lemma 3.9.1]{bfgt}.
\end{proof}

We denote the dg-algebra
$\Ext^\bullet_{\CalD\CW\modu^{G_\bO,\deeq}}(E_0,E_0)$ (with trivial differential)
by $\fE^\bullet$. Since it is an Ext-algebra in the de-equivariantized category between objects
induced from the original category, it is automatically
equipped with an action of $\SO(V_0)\times\Sp(V)=\sG_{\bar0}$ (notation of~\S\ref{sosp}),
and we can consider the corresponding triangulated category $D^{\sG_{\bar0}}_\perf(\fE^\bullet)$.

\begin{lem}
  \label{purity}
There is a canonical equivalence $D^{\sG_{\bar0}}_\perf(\fE^\bullet)\iso\CalD\CW\modu^{G_\bO,\lc}$.
\end{lem}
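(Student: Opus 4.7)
The plan is to implement a formality-type Barr--Beck argument with $E_0$ as generator. The $\Rep(\SO(V_0))\otimes\Rep(\Sp(V))=\Rep(\sG_{\bar0})$-action on $\CalD\CW\modu^{G_\bO,\lc}$ coming from spherical Hecke convolution can be undone by passing to $\CalD\CW\modu^{G_\bO,\deeq}$ and redone by taking $\sG_{\bar0}$-equivariant objects; this yields a canonical equivalence
\[
(\CalD\CW\modu^{G_\bO,\deeq})^{\sG_{\bar0}}\iso \CalD\CW\modu^{G_\bO,\lc}.
\]
Consequently it suffices to produce a $\sG_{\bar0}$-equivariant equivalence
\[
\Psi_0\colon D_\perf(\fE^\bullet)\iso \CalD\CW\modu^{G_\bO,\deeq}
\]
sending the free module $\fE^\bullet$ to (the image of) $E_0$, and then to apply $(-)^{\sG_{\bar0}}$ on both sides. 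Tensoring with $E_0$ gives the natural candidate functor; by~\lemref{formality}, the derived endomorphism dg-algebra of $E_0$ in the de-equivariantized category is formal with cohomology $\fE^\bullet$, so $\Psi_0$ is well-defined and tautologically fully faithful on $\fE^\bullet$. Fully faithfulness on arbitrary perfect modules then extends by closure under shifts, cones and direct summands.

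The main obstacle is verifying that $E_0$ is a compact generator of $\CalD\CW\modu^{G_\bO,\deeq}$. I would argue this first at the level of the abelian heart: \propref{classification} and~\propref{irreducibles} classify the simples in $\CalD\CW\modu^{G_\bO,\lc,\heartsuit}$ as the objects $\IC_{(\btheta,\bzeta)}$ indexed by pairs of length-$n$ partitions, and \corref{irreducibility} realizes each of them as the convolution $\IC_\bzeta*E_0*\IC_\btheta$. In the de-equivariantized picture, the two-sided convolution by $\IC_\bzeta$ and $\IC_\btheta$ is absorbed into tensoring with the $\Rep(\sG_{\bar0})$-module $V_\bzeta\otimes V_\btheta$, so the single object $E_0$ accounts for every simple up to $\sG_{\bar0}$-translation and therefore generates the heart. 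A standard local-finiteness argument on the $\Sp(2n,\bO)$-orbit stratification of $(V_\bF/V_\bO)\times\Gr_G$ (each closed relevant orbit meets only finitely many others of bounded dimension) then promotes heart-level generation to compact generation of the triangulated category.

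Finally, taking $\sG_{\bar0}$-equivariant objects on both sides of $\Psi_0$ yields the desired equivalence $D^{\sG_{\bar0}}_\perf(\fE^\bullet)\iso \CalD\CW\modu^{G_\bO,\lc}$. The only point that needs care is to check that the $\sG_{\bar0}$-action on $\fE^\bullet$ used to define the left-hand side, namely the one coming from the $\Rep(\sG_{\bar0})$-action on Ext's in~\eqref{def deeq}, matches the action equipping $\Psi_0$ with its $\sG_{\bar0}$-equivariant structure coming from the de-equivariantization; these coincide tautologically by the definition of the $\Rep(\sG_{\bar0})$-module category structure, so the passage to equivariant objects is then routine.
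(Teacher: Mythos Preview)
Your overall strategy---de-equivariantize, show $E_0$ is a compact generator, invoke formality (\lemref{formality}), and re-equivariantize---is exactly the argument the paper has in mind (it cites~\cite[Lemma~3.9.2]{bfgt}, which proceeds in just this way). So the approach is correct.

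There is, however, a logical circularity in your generation step. You invoke \corref{irreducibility} to conclude that every simple $\IC_{(\btheta,\bzeta)}$ equals $\IC_\bzeta*E_0*\IC_\btheta$. But \corref{irreducibility} is established only in~\S\ref{proof}, as a byproduct of~Theorem~\ref{main}, whose proof uses~\lemref{purity}. So as written your argument is circular.

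The repair is easy and only requires material already available in~\S\ref{2.6}. What is asserted there (just before~\propref{irreducibles}, independently of the main theorem) is that the \emph{support} of $\IC_\bzeta*E_0*\IC_\btheta$ is the closure of the relevant orbit $\BO_{(\btheta,\bzeta)}$. By the decomposition theorem (or equivalently by the purity/semisimplicity argument you already accept from the proof of~\lemref{formality}), this convolution is a direct sum of shifted simple objects, and the unique simple with full support on $\overline{\BO}_{(\btheta,\bzeta)}$---namely $\IC_{(\btheta,\bzeta)}$---must occur as a summand. Combined with~\propref{classification} and~\propref{irreducibles}, this already shows that the Hecke translates of $E_0$ hit every simple, hence generate, without appealing to~\corref{irreducibility}. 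Replace your reference to~\corref{irreducibility} by this weaker (but sufficient) observation and the argument goes through.
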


\begin{proof}
  Same as the one of~\cite[Lemma 3.9.2]{bfgt}.
\end{proof}

\subsection{Equivariant De Rham cohomology and universal objects}
\label{de rham'}
We will work with the category $\CalD'\CW\modu^{G'_\bO,\lc}\cong\CalD\CW\modu^{G_\bO,\lc}$
in the realization of~Remark~\ref{prime},
i.e.\ in the equivalent incarnation
$\mathrm{D}\modu^{G_\bO\ltimes V_\bO\times\BG_a,\chi,\lc}(\Gr_G\times\on{Heis})$.
Let $\imath\colon\Gr_G\times V_\bO\times\BG_a\hookrightarrow\Gr_G\times\on{Heis}$ denote the
closed embedding.

Let $\CM_\chi$ stand for a unique
$(G_\bO\ltimes V_\bO\times\BG_a,\chi)$-equivariant $D$-module on $\on{Heis}$ that is
a free rank one $\CO_{\on{Heis}}$-module.
For a $D$-module $\CF\in\mathrm{D}\modu^{G_\bO\ltimes V_\bO\times\BG_a,\chi,\lc}(\Gr_G\times\on{Heis})$,
its restriction $\imath^!\CF$ decomposes as $\imath^!\CF=\CF_0\boxtimes\CM_\chi$ for some
$\CF_0\in\mathrm{D}\modu^{G_\bO,\lc}(\Gr_G)$. We will denote $\imath^0\CF:=\CF_0$.
We will keep the same notation $\imath^0$ for the functor
$\CalD\CW\modu^{G_\bO,\lc}\to\mathrm{D}\modu^{G_\bO,\lc}(\Gr_G)$ obtained by composing $\imath^0$
with the equivalence $\CalD\CW\modu^{G_\bO,\lc}\cong\CalD'\CW\modu^{G'_\bO,\lc}\cong
\mathrm{D}\modu^{G_\bO\ltimes V_\bO\times\BG_a,\chi,\lc}(\Gr_G\times\on{Heis})$ of~Remark~\ref{prime}.
We set
\begin{multline}
  \label{cohom'}
  H^\bullet_{G_\bO,\DR}(-):=H^\bullet_{G_\bO,\DR}(\Gr_G,\imath^0-)\colon\\
  \CalD\CW\modu^{G_\bO,\lc}\cong\CalD'\CW\modu^{G'_\bO,\lc}\cong
  \mathrm{D}\modu^{G_\bO\ltimes V_\bO\times\BG_a,\chi,\lc}(\Gr_G\times\on{Heis})\to\BC[\Xi_\fg]\modu.
\end{multline}

We introduce the following object
\begin{equation}
  \label{u'}
  \CU':=\bomega_{\Gr_G}\boxtimes\BC[V_\bO]\in\CalD'\CW\modu^{G'_\bO}
\end{equation}
of the `big' ({\em not} renormalized) category $\CalD'\CW\modu^{G'_\bO}$.
Under the equivalence $\CalD'\CW\modu^{G'_\bO}\cong\CalD\CW\modu^{G_\bO}$ of~\S\ref{mirab},
the object $\CU'$ goes to the `quantization of the universal lattice'
\begin{equation}
  \label{u}
  \CU\in\CalD\CW\modu^{G_\bO}.
\end{equation}

For a future use we formulate a relation between $\CU$ and the left convolution action of
$\mathrm{D}\modu^{G_\bO,\lc}(\Gr_G)$ on $\CalD\CW\modu^{G_\bO,\lc}$. 

\begin{lem}
  \label{left action U}
  \textup{(a)} For $\CF_0\in\mathrm{D}\modu^{G_\bO,\lc}(\Gr_G)$, we have
  $\CF_0*E_0=\CF_0\stackrel{!}{\otimes}\CU\in\CalD\CW\modu^{G_\bO,\lc}$.

  \textup{(b)} For $\CF_1\in\mathrm{D}\modu_{1/2}^{G_\bO,\lc}(\Gr_G)$, we have
  $E_0*\CF_1=\CF_1\boxtimes\BC[V_\bO]\in\CalD\CW\modu^{G_\bO,\lc}$. 
\end{lem}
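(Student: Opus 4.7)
Part (b) is immediate: the right action of $\mathrm{D}\modu_{1/2}^{G_\bO,\lc}(\Gr_G)$ on $\CalD\CW\modu^{G_\bO,\lc}$ defined in $\S$\ref{mirab} acts purely by convolution on the $\mathrm{D}\modu_{1/2}(\Gr_G)$-factor, leaving the $\CW\modu$-factor alone. Since $\IC_0$ is the convolution unit, $E_0 * \CF_1 = (\IC_0 * \CF_1) \otimes \BC[V_\bO] = \CF_1 \otimes \BC[V_\bO]$.

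For part (a), the plan is to transport the equation to the primed realization $\CalD'\CW\modu^{G'_\bO,\lc}$, where it becomes tautological. Under \eqref{two real}, left convolution by $\CF_0 \in \mathrm{D}\modu^{G_\bO,\lc}(\Gr_G)$ on the unprimed side corresponds to right convolution on the primed side (the Chevalley involution $\fC$ of $\Sp(2n)$ being trivial), and the $\stackrel{!}{\otimes}$-action translates to the $\stackrel{!}{\otimes}$-action on the primed $\Gr_G$-factor. By construction, $\CU$ corresponds to $\CU' = \bomega_{\Gr_G} \boxtimes \BC[V_\bO]$. The key preliminary step is to trace $E_0$ through the two-step equivalence \eqref{two real}: passing through the $G_\bF$-equivariant middle category by the induction functor $G_\bF \times^{G_\bO}(-)$ and then restricting along the base point of the twisted $\Gr_G$-factor, $E_0 = \IC_0 \otimes \BC[V_\bO]$ corresponds on the primed side to $\tilde E_0 := \IC_0 \otimes \BC[V_\bO]$, with $\IC_0$ now living on the untwisted $\Gr_G$-factor.

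Given these identifications, the assertion reduces to a trivial check. In the primed form, the right action of $\CF_0$ on $\tilde E_0$ equals $(\IC_0 * \CF_0) \otimes \BC[V_\bO] = \CF_0 \otimes \BC[V_\bO]$ by the convolution-unit property of $\IC_0$, while $\CF_0 \stackrel{!}{\otimes} \CU' = (\CF_0 \stackrel{!}{\otimes} \bomega_{\Gr_G}) \otimes \BC[V_\bO] = \CF_0 \otimes \BC[V_\bO]$ by the $!$-tensor-unit property of $\bomega_{\Gr_G}$. Both sides produce the same object, and transporting back via \eqref{two real} yields $\CF_0 * E_0 = \CF_0 \stackrel{!}{\otimes} \CU$ in $\CalD\CW\modu^{G_\bO,\lc}$.

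The main obstacle lies in the identification of $E_0$ on the primed side; one must resist the tempting but incorrect guess $E_0 \leftrightarrow \CU'$. The induction functor $G_\bF \times^{G_\bO}(-)$ is \emph{not} the naive external product ``$\bomega_{\Gr_G} \boxtimes(-)$''; rather, it spreads the point-support of $E_0$ onto the twisted diagonal in the product of the two $\Gr_G$-factors and carries the translated Fock modules along. Only after the subsequent restriction at the base point of the twisted $\Gr_G$-factor does the support collapse back to the base point of the untwisted one, producing the skyscraper $\IC_0$ rather than $\bomega_{\Gr_G}$. Tracking this $\IC_0 \leftrightarrow \bomega$ swap between the two realizations is the sole delicate point of the argument.
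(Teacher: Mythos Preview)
Your proof is correct and follows essentially the same strategy as the paper: transport the identity through~\eqref{two real} to a realization where it reduces to the tautology $\IC_0*\CF_0=\CF_0=\CF_0\stackrel{!}{\otimes}\bomega_{\Gr_G}$. The only difference is that the paper stops at the middle ``symmetric'' realization $(\mathrm{D}\modu(\Gr_G)\otimes\mathrm{D}\modu_{1/2}(\Gr_G)\otimes\CW\modu)^{G_\bF}$ and phrases the key identification as $E_0\mapsto\Delta_*\CU$ (so that $\CF_0*E_0\mapsto\on{pr}_1^!\CF_0\stackrel{!}{\otimes}\Delta_*\CU$), whereas you continue one step further to the primed side and read off $E_0\mapsto\IC_0\boxtimes\BC[V_\bO]$; your ``induction onto the diagonal, then restriction at the base point'' is exactly what unpacks the paper's terse $\Delta_*\CU$.
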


\begin{proof}
(b) is clear. To prove (a), we invoke the `symmetric' realization
$\CalD\CW\modu^{G_\bO}\cong
(\mathrm{D}\modu(\Gr_G)\otimes\mathrm{D}\modu_{1/2}(\Gr_G)\otimes\CW\modu)^{G_\bF}$ of~\eqref{two real}.
Under this equivalence, $E_0\in\CalD\CW\modu^{G_\bO}$ goes to
$\Delta_*\CU\in(\mathrm{D}\modu(\Gr_G)\otimes\mathrm{D}\modu_{1/2}(\Gr_G)\otimes\CW\modu)^{G_\bF}$,
where $\Delta\colon\Gr_G\hookrightarrow\Gr_G\times\Gr_G$ stands for the diagonal embedding.
Also, $\CF_0*E_0$ goes to $\on{pr}_1^!\CF_0\stackrel{!}{\otimes}\Delta_*\CU$.
\end{proof}

\begin{rem}
Let $\CU_{\on{Heis}}$ denote the image of $\CU$ under the equivalence 
$\CalD\CW\modu^{G_\bO}\cong\mathrm{D}\modu_{1/2}^{G_\bO\ltimes V_\bO\times\BG_a,\chi}(\Gr_G\times\on{Heis})$,
cf.~Lemma~\ref{heis}.
Let also $\bomega_{\on{Heis}}$ denote the dualizing sheaf on $\on{Heis}$.
Then for $\CF_0\in\mathrm{D}\modu^{G_\bO,\lc}(\Gr_G)$, the convolution
action $\CF_0*E_0\in\CalD\CW\modu^{G_\bO,\lc}$ goes under the equivalence
$\CalD\CW\modu^{G_\bO,\lc}\cong\mathrm{D}\modu_{1/2}^{G_\bO\ltimes V_\bO\times\BG_a,\chi,\lc}(\Gr_G\times\on{Heis})$
of~Lemma~\ref{heis} to $(\CF_0\boxtimes\bomega_{\on{Heis}})\stackrel{!}{\otimes}\CU_{\on{Heis}}
  \in\mathrm{D}\modu_{1/2}^{G_\bO\ltimes V_\bO\times\BG_a,\chi,\lc}(\Gr_G\times\on{Heis})$.
\end{rem}

We have a natural morphism of dg-algebras
$R\Gamma_{G_\bO,\DR}(\Gr_G,\CO_{\Gr_G})\to
\on{RHom}_{\mathrm{D}\modu^{G_\bO}(\Gr_G)}(\bomega_{\Gr_G},\bomega_{\Gr_G})$.
Since the dg-algebra $R\Gamma_{G_\bO,\DR}(\Gr_G,\CO_{\Gr_G})$ is formal and quasi-isomorphic to
$\BC[\Xi_\fg]$, we obtain the morphisms of dg-algebras
\begin{equation}
  \label{action}
  \BC[\Xi_\fg]\to\on{RHom}_{\mathrm{D}\modu^{G_\bO}(\Gr_G)}(\bomega_{\Gr_G},\bomega_{\Gr_G})\ \on{and}\
    \BC[\Xi_\fg]\to\on{RHom}_{\CalD'\CW\modu^{G'_\bO}}(\CU',\CU').
\end{equation}

Also, let $\nabla\in(\CW\hat\otimes\CW)\modu$ be the diagonal module
(the image of the regular bimodule $\CW\in(\CW\hat\otimes\CW^{\on{opp}})\modu$ under the isomorphism
$\CW^{\on{opp}}\iso\CW$ that multiplies all the generators in $V_\bF$ by $\sqrt{-1}$).
Given $\CF\in\CalD'\CW\modu^{G'_\bO,\lc},\
\CG\in\CalD'\CW\modu^{G'_\bO}$, we define a pairing with values in $\BC[\Xi_\fg]\modu$:
\[\langle\CF,\CG\rangle=\Hom_{\CW\hat\otimes\CW}(\nabla,H^\bullet_{G_\bO,\DR}(\Gr_G,\Delta^!(\CF\boxtimes\CG))).\]
Here $\Delta\colon\Gr_G\hookrightarrow\Gr_G\times\Gr_G$ is the diagonal embedding, so that
$H^\bullet_{G_\bO,\DR}(\Gr_G,\Delta^!(\CF\boxtimes\CG))$ carries a structure of a module over
$H^\bullet_{G_\bO}(\Gr_G)\otimes\CW\hat\otimes\CW$.

Then we have a natural isomorphism of $\BC[\Xi_\fg]$-modules
\begin{equation}
  \label{pairing}
 \forall \CE\in\CalD'\CW\modu^{G'_\bO,\lc},\ H^\bullet_{G_\bO,\DR}(\CE)\cong\langle\CE,\CU'\rangle.
\end{equation}

\subsection{Idempotents}
\label{idem}
According to~\eqref{action}, $\bomega_{\Gr_G}\in\mathrm{D}\modu^{G_\bO}(\Gr_G)$ carries a structure
of a module over a formal dg-algebra $\BC[\Xi_\fg]=H^\bullet_G(\Gr_G)$. The equivariant cohomology
$\BC[\Sigma_\fg]=H^\bullet_G(\on{pt})$ also carries a structure of a formal $\BC[\Xi_\fg]$-module.
We set
\begin{equation}
  \label{tilde omega}
  \widetilde\bomega_{\Gr_G}:=R\CH om_{\BC[\Xi_\fg]}(\BC[\Sigma_\fg],\bomega_{\Gr_G})
  \in\mathrm{D}\modu^{G_\bO}(\Gr_G),
\end{equation}
where $R\CH om_{\BC[\Xi_\fg]}$ stands for the right adjoint to $\otimes_{\BC[\Xi_\fg]}$.

Recall that $\varTheta$ and hence $\CR$ (notation of~\eqref{R}) is a direct sum of two sheaves:
$\CR=\CR_0\oplus\CR_1=\RT_!^{-1}(\varTheta_g)\oplus\RT_!^{-1}(\varTheta_s)$; and $\CR_0$
has zero costalks at all the {\em odd} $G_\bO$-orbits in $\Gr_G$, while $\CR_1$ has zero
costalks at all the {\em even} $G_\bO$-orbits in $\Gr_G$. Similarly to~\eqref{action}, we have
a morphism of dg-algebras
\[\BC[\Xi_\fg]\to\on{RHom}_{\mathrm{D}\modu_{-1/2}^{G_\bO}}(\Gr_G)(\CR_0,\CR_0),\]
and we set
\begin{equation}
  \label{tilde R}
  \widetilde\CR=\widetilde\CR_0\oplus\widetilde\CR_1:=R\CH om_{\BC[\Xi_\fg]}(\BC[\Sigma_\fg],\CR)
  \in\mathrm{D}\modu_{-1/2}^{G_\bO}(\Gr_G).
\end{equation}

Finally, we set (cf.~\eqref{u'})
\begin{equation}
  \label{tilde U}
  \widetilde\CU':=R\CH om_{\BC[\Xi_\fg]}(\BC[\Sigma_\fg],\CU')=
  \widetilde\bomega_{\Gr_G}\boxtimes\BC[V_\bO]\in\CalD'\CW\modu^{G'_\bO}.
\end{equation}

\begin{lem}
  \label{idempo}
We have natural isomorphisms

  \textup{(a)} $\widetilde\bomega_{\Gr_G}*\widetilde\bomega_{\Gr_G}\simeq\widetilde\bomega_{\Gr_G}$.

  \textup{(b)} $\widetilde\CR_0*\widetilde\CR_0\simeq\widetilde\CR_0$.
\end{lem}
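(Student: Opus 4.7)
The strategy is to reduce both parts to the coherent side via the Satake equivalences, exploiting that $R\CH om_{\BC[\Xi_\fg]}(\BC[\Sigma_\fg],-)$ is the derived $I$-torsion functor for $I = \ker(\BC[\Xi_\fg] \twoheadrightarrow \BC[\Sigma_\fg])$ (the ideal of the zero section of $\Xi_\fg = T\Sigma_\fg \to \Sigma_\fg$), and that the $\BC[\Xi_\fg]$-action on $\mathrm{D}\modu^{G_\bO}(\Gr_G)$ from~\eqref{action} is central in the monoidal sense by Ginzburg's theorem.

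For part (a), the first step is to push the Tilde operation through one factor of the convolution, using centrality of $\BC[\Xi_\fg]$ to obtain an isomorphism
\[
\widetilde\bomega_{\Gr_G} * \widetilde\bomega_{\Gr_G} \simeq R\CH om_{\BC[\Xi_\fg]}\bigl(\BC[\Sigma_\fg], \bomega_{\Gr_G} * \widetilde\bomega_{\Gr_G}\bigr).
\]
The second step is to apply the untwisted Satake equivalence $\bbeta^{\gvee}$ of~\eqref{nilp}: convolution corresponds to tensor product over $\Sym^\bullet(\gvee[-2])$, and~\eqref{glob coh untwisted} intertwines the central $\BC[\Xi_\fg]$-action with the natural action on the Kostant--Whittaker reduction. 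I would identify $\bbeta^{\gvee,-1}(\bomega_{\Gr_G})$ as a coherent object whose KW-reduction is $\BC[\Xi_\fg]$ with its regular action, so that Satake-dually $\bomega_{\Gr_G}$ is the base change of the unit $\Sym^\bullet(\gvee[-2])$ along $\BC[\Sigma_\fg] \to \BC[\Xi_\fg]$. The Tilde operation then reverses this base change, and idempotency of the monoidal unit under tensor product gives the desired isomorphism.

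For part (b), the same argument applies with the twisted Satake equivalence $\bbeta^\fg$ of~\eqref{nilp twist} and the theta-sheaf relation~\eqref{glob coh}: the decomposition $\CR = \CR_0 \oplus \CR_1$ reflects the $\BZ/2$-grading on $G_\bO$-orbits on $\Gr_G$ by parity, and $\CR_0$ is Satake dual to the even part of the analogous base-change construction on the twisted side. The Tilde operation again reverses this base change, yielding $\widetilde\CR_0 * \widetilde\CR_0 \simeq \widetilde\CR_0$.

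The main obstacle is precisely identifying $\bbeta^{\gvee,-1}(\bomega_{\Gr_G})$ and $\bbeta^{\fg,-1}(\CR_0)$, since these live in the Ind-completed (renormalized) categories where Kostant--Whittaker reduction is not fully faithful. Pinning them down requires tracking both the equivariance structure and the central $\BC[\Xi_\fg]$-action through the equivalences, and justifying the centrality manipulation in the first displayed equation — effectively showing that the derived $I$-torsion functor commutes appropriately with convolution in the presence of the central action.
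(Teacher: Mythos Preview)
Your broad strategy --- transfer to the coherent side via derived Satake and argue there --- is the same as the paper's, but the execution diverges at precisely the step you flag as the ``main obstacle,'' and the workaround you propose does not go through.

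The paper's proof is one line: under the equivalence $\bbeta^{\gvee}$ of~\eqref{nilp}, the object $\widetilde\bomega_{\Gr_G}$ corresponds to $j^\reg_* i_\reg^!\CO_{\gvee[2]}$, where $j^\reg\colon\BO_\reg\hookrightarrow\CN_{\gvee}$ is the open embedding of (the formal neighbourhood of) the regular orbit into (the formal neighbourhood of) the nilpotent cone, and $i_\reg$ is the closed embedding of $\BO_\reg$ into $\gvee[2]$. This object is idempotent because tensoring with it realizes the open localization to the regular locus inside the nilpotent-supported category. Part~(b) is identical via the twisted equivalence $\bbeta^\fg$ (composed with twisting by $\CalD^{-1}$), with $\widetilde\CR_0$ corresponding to $j^\reg_* i_\reg^!\CO_{\fg[2]}$.

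Your route --- identify $\bomega_{\Gr_G}$ as the base change of the unit $\Sym^\bullet(\gvee[-2])$ along $\BC[\Sigma_\fg]\to\BC[\Xi_\fg]$, then claim that the Tilde operation ``reverses this base change'' so that idempotency reduces to that of the monoidal unit --- fails at the second step. The functor $R\CH om_{\BC[\Xi_\fg]}(\BC[\Sigma_\fg],-)$ is derived $I$-torsion for $I=\ker(\BC[\Xi_\fg]\twoheadrightarrow\BC[\Sigma_\fg])$; applied to $\BC[\Xi_\fg]\otimes_{\BC[\Sigma_\fg]}M$ it returns local cohomology along the zero section, which is $M$ shifted by the rank $n$ (up to a determinant twist), not $M$ itself. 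Squaring would then double the shift, contradicting the lemma. More conceptually, $\widetilde\bomega_{\Gr_G}$ is not a shift of the monoidal unit: convolution with it is a \emph{nontrivial} idempotent endofunctor (projection onto the regular locus on the coherent side), not the identity up to shift. The concrete identification $\widetilde\bomega_{\Gr_G}\leftrightarrow j^\reg_* i_\reg^!\CO_{\gvee[2]}$ is exactly the missing input, and once you have it the centrality and derived-torsion manipulations become unnecessary.
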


\begin{proof} (a) Under the equivalence
$\bbeta^{\gvee}\colon D^{G^\vee}_{\on{nilp}}(\Sym^\bullet(\gvee[-2]))\iso\mathrm{D}\modu^{G_\bO}(\Gr_G)$
of~\eqref{nilp}, $\widetilde\bomega_{\Gr_G}$ corresponds to $j^\reg_*i_\reg^!\CO_{\gvee[2]}$. Here
$j^\reg\colon\BO_\reg\hookrightarrow\CN_{\gvee}$ stands for the open embedding of the formal
neighbourhood of the regular nilpotent orbit into the formal neighbourhood of the nilpotent cone
in $\gvee[2]$. Furthermore, $i_\reg$ stands for the closed embedding of $\BO_\reg$ into $\gvee[2]$.
Clearly, $j^\reg_*i_\reg^!\CO_{\gvee[2]}$ is an idempotent.

(b) Similarly, under the composed equivalence
\[D^G_{\on{nilp}}(\Sym^\bullet(\fg[-2]))\iso\mathrm{D}\modu_{1/2}^{G_\bO}(\Gr_G)
\iso\mathrm{D}\modu_{-1/2}^{G_\bO}(\Gr_G),\]
$\widetilde\CR_0$ corresponds to $j^\reg_*i_\reg^!\CO_{\fg[2]}$. Here
the first equivalence is $\bbeta^\fg$ of~\eqref{nilp twist}, and the second one is the twisting
by the inverse determinant line bundle $\CalD^{-1}$.
\end{proof}

\begin{rem}
  Under the equivalence $D^G_{\on{nilp}}(\Sym^\bullet(\fg[-2]))\iso\mathrm{D}\modu_{-1/2}^{G_\bO}(\Gr_G)$
  of the proof of~Lemma~\ref{idempo}(b), $\widetilde\CR_1\in\mathrm{D}\modu_{-1/2}^{G_\bO}(\Gr_G)$
  corresponds to $j^\reg_*(\CL\otimes i_\reg^!\CO_{\fg[2]})$. Here $\CL$ is the $G$-equivariant line
  bundle on $\BO_\reg$ with the {\em nontrivial} action of the center $\{\pm1\}\subset G$.
\end{rem}

The category $\mathrm{D}\modu_{-1/2}^{G_\bO}(\Gr_G)$ splits into direct sum of two subcategories:
$\mathrm{D}\modu_{-1/2}^{G_\bO}(\Gr_G)=\mathrm{D}\modu_{-1/2}^{G_\bO}(\Gr_G)_0
\oplus\mathrm{D}\modu_{-1/2}^{G_\bO}(\Gr_G)_1$. The objects of the first (resp.\ second) subcategory
have zero costalks at the {\em odd} (resp.\ {\em even}) $G_\bO$-orbits in $\Gr_G$.
The category $\mathrm{D}\modu_{-1/2}^{G_\bO}(\Gr_G)_0$ is monoidal. According to~\cite{dlyz}, it is
monoidally equivalent to $\mathrm{D}\modu^{\on{Spin}(2n+1,\bO)}(\Gr_{\on{Spin}(2n+1)})$, and $\CR_0$ goes
to the dualizing sheaf (`endoscopy').

\begin{lem}
  \label{intern}
  The internal Hom objects are computed as follows:

  \textup{(a)} $\CH om_{\mathrm{D}\modu^{G_\bO}(\Gr_G)}(\widetilde\bomega_{\Gr_G},\widetilde\bomega_{\Gr_G})=
  \widetilde\bomega_{\Gr_G}$.

  \textup{(b)} $\CH om_{\mathrm{D}\modu_{-1/2}^{G_\bO}(\Gr_G)_0}(\widetilde\CR_0,\widetilde\CR_0)=\widetilde\CR_0$.
\end{lem}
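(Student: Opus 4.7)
The plan is to transport both assertions to the coherent side of the Satake equivalence, where they reduce to a single abstract statement about idempotent objects. For (a), apply the monoidal equivalence $\bbeta^{\gvee}$ of~\eqref{nilp}; for (b), apply $\bbeta^\fg$ of~\eqref{nilp twist} post-composed with the $\CalD^{-1}$-twist $\mathrm{D}\modu_{1/2}^{G_\bO}(\Gr_G)\iso\mathrm{D}\modu_{-1/2}^{G_\bO}(\Gr_G)$. As recalled in the proof of~Lemma~\ref{idempo}, these equivalences send $\widetilde\bomega_{\Gr_G}$ and $\widetilde\CR_0$ to $E:=j^\reg_*i_\reg^!\CO$, where $\CO=\CO_{\gvee[2]}$ in case (a) and $\CO=\CO_{\fg[2]}$ in case (b). Since monoidal equivalences of closed symmetric monoidal categories preserve internal Hom, both statements reduce to the single coherent assertion
\[\CH om(E,E)\cong E.\]

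Lemma~\ref{idempo} furnishes the key input: the multiplication $m\colon E\otimes^L E\iso E$ realizes $E$ as an idempotent in the ambient closed symmetric monoidal $\infty$-category. In the $\infty$-categorical setting this upgrades canonically to an idempotent algebra structure with unit $\eta\colon\mathbf{1}\to E$, and the full subcategory of $E$-local objects is reflective with reflection $X\mapsto X\otimes E$. Moreover, $E$ is itself $E$-local: the map $\mathrm{id}_E\otimes\eta\colon E\to E\otimes E$ is a one-sided inverse to the isomorphism $m$ by the unit axiom, hence is itself an isomorphism. Therefore, for every object $X$,
\[\Hom(X,\CH om(E,E))=\Hom(X\otimes E,E)=\Hom(X,E),\]
the first equality being the internal-Hom adjunction and the second the reflection property applied to the $E$-local object $E$. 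Yoneda then yields $\CH om(E,E)\cong E$, proving both (a) and (b).

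The only point requiring care is that $\widetilde\bomega_{\Gr_G}$ and $\widetilde\CR_0$ live in the ``big'' (not locally compact) nilpotent-support categories, so one must invoke the nilpotent-support versions~\eqref{nilp},~\eqref{nilp twist} of Satake, rather than the locally compact versions; these are explicitly stated as monoidal equivalences, so the compatibility of internal Hom across Satake is automatic. Everything beyond this is purely formal, so no obstacle of substance arises; the argument is essentially just the observation that in Lemma~\ref{idempo} the idempotence $E\otimes E\cong E$ already encodes the stronger self-internal-Hom identity.
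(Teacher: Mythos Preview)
Your overall strategy is sound and is genuinely different from the paper's: the paper proves (a) by the direct identity $\CH om_{\mathrm{D}\modu^{G_\bO}(\Gr_G)}(\bomega_{\Gr_G},\bomega_{\Gr_G})=\bomega_{\Gr_G}\otimes H^\bullet(\Gr_G)$ and then deduces (b) from (a) via endoscopy (the monoidal equivalence $\mathrm{D}\modu_{-1/2}^{G_\bO}(\Gr_G)_0\simeq\mathrm{D}\modu^{\on{Spin}(2n+1,\bO)}(\Gr_{\on{Spin}(2n+1)})$ sending $\CR_0$ to the dualizing sheaf). Your argument is more uniform, treating both cases at once on the coherent side, and avoids invoking endoscopy.

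There is, however, a real gap. The sentence ``In the $\infty$-categorical setting this upgrades canonically to an idempotent algebra structure with unit $\eta\colon\mathbf{1}\to E$'' is not a general fact: a bare isomorphism $E\otimes E\cong E$ does \emph{not} produce a unit map making $\eta\otimes\mathrm{id}_E$ an equivalence (e.g.\ a countably-infinite-dimensional vector space $E$ satisfies $E\otimes E\cong E$ but admits no such $\eta$). In Lurie's sense an idempotent object is the \emph{datum} of $\eta$, not of $m$. What saves you here is precisely the description $E=j^\reg_*i_\reg^!\CO$ you already quoted from the proof of Lemma~\ref{idempo}: since $j^\reg$ is an open immersion, the adjunction unit $\mathbf{1}\to j^\reg_*j^{\reg,*}\mathbf{1}$ furnishes the required $\eta$, and $\eta\otimes\mathrm{id}_E$ is an equivalence because $j^{\reg,*}j^\reg_*\simeq\mathrm{id}$. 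Once you state this, the rest of your Yoneda argument goes through verbatim. So the fix is one sentence, but as written the step is unjustified.
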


\begin{proof}
  (a) follows from $\CH om_{\mathrm{D}\modu^{G_\bO}(\Gr_G)}(\bomega_{\Gr_G},\bomega_{\Gr_G})=
  \bomega_{\Gr_G}\otimes H^\bullet(\Gr_G)$.

  (b) follows from (a) and endoscopy.
\end{proof}

Since the monoidal categories $\mathrm{D}\modu^{G_\bO}(\Gr_G)$ and $\mathrm{D}\modu_{-1/2}^{G_\bO}(\Gr_G)_0$
act on $\CalD'\CW\modu^{G'_\bO}$, we can consider the corresponding internal Hom objects
$\CH om_{\mathrm{D}\modu^{G_\bO}(\Gr_G)}(\widetilde\CU',\widetilde\CU')\in\mathrm{D}\modu^{G_\bO}(\Gr_G)$ and
$\CH om_{\mathrm{D}\modu_{-1/2}^{G_\bO}(\Gr_G)_0}(\widetilde\CU',\widetilde\CU')\in
\mathrm{D}\modu_{-1/2}^{G_\bO}(\Gr_G)_0$, and
$\CH om_{\mathrm{D}\modu_{-1/2}^{G_\bO}(\Gr_G)}(\widetilde\CU',\widetilde\CU')\in
\mathrm{D}\modu_{-1/2}^{G_\bO}(\Gr_G)$.

\begin{lem}
  \label{interna}
  We have canonical isomorphisms

  \textup{(a)}
  $\CH om_{\mathrm{D}\modu^{G_\bO}(\Gr_G)}(\widetilde\CU',\widetilde\CU')\cong\widetilde\bomega_{\Gr_G}$.

  \textup{(b)}
  $\CH om_{\mathrm{D}\modu_{-1/2}^{G_\bO}(\Gr_G)}(\widetilde\CU',\widetilde\CU')\cong\widetilde\CR$.

  \textup{(c)}
  $\CH om_{\mathrm{D}\modu_{-1/2}^{G_\bO}(\Gr_G)_0}(\widetilde\CU',\widetilde\CU')\cong\widetilde\CR_0$.
\end{lem}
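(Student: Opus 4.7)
My plan is to prove all three parts by Yoneda-style characterization of internal Hom, exploiting the factorization $\widetilde\CU' = \widetilde\bomega_{\Gr_G}\boxtimes\BC[V_\bO]$ so that the computation separates into a $\Gr_G$-factor, where Lemma~\ref{intern} applies, and a $\CW\modu$-factor, where the relevant input is that $\BC[V_\bO]$ is the irreducible vacuum Fock module.

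For (a): given $\CF\in\mathrm{D}\modu^{G_\bO}(\Gr_G)$, the right-convolution $\widetilde\CU'\ast\CF$ acts only on the $\Gr_G$-factor and yields $(\widetilde\bomega_{\Gr_G}\ast\CF)\boxtimes\BC[V_\bO]$. Since $\CalD'\CW\modu=\mathrm{D}\modu(\Gr_G)\otimes\CW\modu$, the internal Hom into the product $\widetilde\bomega_{\Gr_G}\boxtimes\BC[V_\bO]$ (before taking $G'_\bO$-invariants) is computed as the tensor product of the internal Homs on each factor. On the second factor, $\End_{\CW\modu}(\BC[V_\bO])=\BC$ by Schur, as $\BC[V_\bO]$ is irreducible in $\CW\modu$. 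On the first factor we obtain $\CH om_{\mathrm{D}\modu^{G_\bO}(\Gr_G)}(\widetilde\bomega_{\Gr_G},\widetilde\bomega_{\Gr_G})=\widetilde\bomega_{\Gr_G}$ by Lemma~\ref{intern}(a). Taking $G_\bO$-equivariance at the end commutes with these steps.

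For (b): the left action of $\mathrm{D}\modu_{-1/2}^{G_\bO}(\Gr_G)$ on $\CalD'\CW\modu^{G'_\bO}$ arises from the twisted action $\mathrm{D}\modu_{-1/2}(G_\bF)\circlearrowright\CW\modu$ on the Weyl algebra factor. The same Yoneda argument gives an internal Hom object $\CH\in\mathrm{D}\modu_{-1/2}^{G_\bO}(\Gr_G)$ whose relationship to $\widetilde\CU'$ is controlled, via \eqref{pairing}, by the De Rham cohomology functor $H^\bullet_{G_\bO,\DR}(\Gr_G,-)$. Exactly here the theta-sheaf enters: the fundamental isomorphism \eqref{glob coh} identifies the twisted equivariant cohomology with the Kostant--Whittaker reduction, and this identification forces the internal Hom to be the idempotent twisted object $\widetilde\CR$ (after passing from $\CR$ to its $\BC[\Sigma_\fg]$-dressed version). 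More concretely, since the monoidal structure of $\mathrm{D}\modu_{-1/2}^{G_\bO}(\Gr_G)_0$ corresponds via the endoscopic equivalence of~\cite{dlyz} to $\mathrm{D}\modu^{\on{Spin}(2n+1,\bO)}(\Gr_{\on{Spin}(2n+1)})$, and under that equivalence $\CR_0$ goes to the dualizing sheaf, the computation reduces to the untwisted analog from (a); this transports the answer $\widetilde\bomega$ over to $\widetilde\CR_0$, and by the odd/even decomposition gives $\widetilde\CR$ overall. Part (c) then follows by restricting the identification of (b) to the even block $\mathrm{D}\modu_{-1/2}^{G_\bO}(\Gr_G)_0$, using the decomposition $\widetilde\CR=\widetilde\CR_0\oplus\widetilde\CR_1$ and compatibility of internal Hom with direct summands of the acting monoidal category.

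The main obstacle is part (b): one must check that the Yoneda computation of $\CH om_{\mathrm{D}\modu_{-1/2}^{G_\bO}(\Gr_G)}(\widetilde\CU',\widetilde\CU')$ truly produces a $(-1/2)$-twisted object, and that it is precisely $\widetilde\CR$ rather than some other twisted idempotent. The critical input is the theta-sheaf isomorphism \eqref{glob coh}, which intertwines the twisted Satake equivalence $\bbeta^\fg$ with Kostant--Whittaker reduction via $\overset!\otimes\CR$; this is what transports the untwisted endomorphism computation to the twisted setting. One should also verify that the $\BC[\Xi_\fg]$-module structures coming from \eqref{action} are compatible on both sides, so that the derived base change $R\CH om_{\BC[\Xi_\fg]}(\BC[\Sigma_\fg],-)$ defining the tilded objects commutes with the internal Hom computation—this is the reason the answer is $\widetilde\CR$ and not $\CR$.
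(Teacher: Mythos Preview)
Your argument for (a) is essentially the paper's: use the factorization $\widetilde\CU'=\widetilde\bomega_{\Gr_G}\boxtimes\BC[V_\bO]$, note that the right action of $\mathrm{D}\modu^{G_\bO}(\Gr_G)$ touches only the $\Gr_G$-factor, and apply Lemma~\ref{intern}(a). Your deduction of (c) from (b) via projection to the even block is also the paper's.

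The gap is in (b). The paper's proof hinges on one external input you do not supply: the identification
\[
\CH om_{\mathrm{D}\modu_{-1/2}^{G_\bO}(\Gr_G)}(\BC[V_\bO],\BC[V_\bO])=\CR
\]
from~\cite[Proposition~3.2.1]{bdfrt}. With this in hand the factorization $\widetilde\CU'=\widetilde\bomega_{\Gr_G}\boxtimes\BC[V_\bO]$ gives
$\CH om(\widetilde\CU',\widetilde\CU')=\widetilde\bomega_{\Gr_G}\overset{!}{\otimes}\CR=\widetilde\CR$
immediately. Your proposed substitute, via~\eqref{pairing},~\eqref{glob coh}, and endoscopy, does not close this gap. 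Equations~\eqref{pairing} and~\eqref{glob coh} are statements about equivariant De Rham cohomology, i.e.\ about $\Hom$'s into a single fixed object; they do not by themselves determine the representing object of the internal $\CH om$ functor as a $(-1/2)$-twisted $D$-module on $\Gr_G$. And endoscopy identifies the \emph{acting monoidal category} $\mathrm{D}\modu_{-1/2}^{G_\bO}(\Gr_G)_0$ with $\mathrm{D}\modu^{\on{Spin}(2n+1,\bO)}(\Gr_{\on{Spin}(2n+1)})$, but you have not shown that the \emph{module object} $\BC[V_\bO]$ (or $\widetilde\CU'$) corresponds to anything on the $\on{Spin}(2n+1)$ side to which the untwisted computation of (a) could be applied; the sentence ``the computation reduces to the untwisted analog from (a)'' is exactly the step that needs proof. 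Moreover, even granting the endoscopy route, it would at best yield $\widetilde\CR_0$ on the even block; your passage from there to the full $\widetilde\CR=\widetilde\CR_0\oplus\widetilde\CR_1$ ``by the odd/even decomposition'' is not justified, since you have said nothing about the odd component. You need either the result of~\cite{bdfrt} or an independent computation of the internal $\CH om$ of $\BC[V_\bO]$ in $(\CW\modu)^{G'_\bO}$.
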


\begin{proof}
  (a) follows from~Lemma~\ref{intern}(a) since
  $\widetilde\CU'=\widetilde\bomega_{\Gr_G}\boxtimes\BC[V_\bO]$ and $\mathrm{D}\modu^{G_\bO}(\Gr_G)$
  acts on $\CalD'\CW\modu^{G'_\bO}$ via its action on $\mathrm{D}\modu^{G_\bO}(\Gr_G)$ (on itself).

  (b) We have $\CH om_{\mathrm{D}\modu_{-1/2}^{G_\bO}(\Gr_G)}(\BC[V_\bO],\BC[V_\bO])=\CR$
  according to~\cite[Proposition 3.2.1]{bdfrt}. Hence
  \begin{multline*}
    \CH om_{\mathrm{D}\modu_{-1/2}^{G_\bO}(\Gr_G)}(\widetilde\CU',\widetilde\CU')\\
    =\widetilde\bomega_{\Gr_G}
  \stackrel{!}{\otimes}\CH om_{\mathrm{D}\modu_{-1/2}^{G_\bO}(\Gr_G)}(\BC[V_\bO],\BC[V_\bO])=
  \widetilde\bomega_{\Gr_G}\stackrel{!}{\otimes}\CR=\widetilde\CR.
  \end{multline*}

  (c) follows from (b) since $\CH om_{\mathrm{D}\modu_{-1/2}^{G_\bO}(\Gr_G)_0}(\widetilde\CU',\widetilde\CU')$
  is obtained from $\CH om_{\mathrm{D}\modu_{-1/2}^{G_\bO}(\Gr_G)}(\widetilde\CU',\widetilde\CU')$ by
  applying the projection functor
  $\mathrm{D}\modu_{-1/2}^{G_\bO}(\Gr_G)\to\mathrm{D}\modu_{-1/2}^{G_\bO}(\Gr_G)_0$ (adjoint to the
  embedding $\mathrm{D}\modu_{-1/2}^{G_\bO}(\Gr_G)_0\hookrightarrow\mathrm{D}\modu_{-1/2}^{G_\bO}(\Gr_G)$).
\end{proof}

We define the full monoidal subcategory $\CalC_\omega\subset\mathrm{D}\modu^{G_\bO}(\Gr_G)$
(resp.\ $\CalC_\CR\subset\mathrm{D}\modu_{-1/2}^{G_\bO}(\Gr_G)_0$) generated by $\widetilde\bomega_{\Gr_G}$
(resp.\ $\widetilde\CR_0$) with respect to finitely many operations of taking direct sums and cones.
Similarly, we define the full subcategory $\CalC_\CU\subset\CalD'\CW\modu^{G'_\bO}$ generated by
$\widetilde\CU'$ with respect to finitely many operations of taking direct sums and cones.
Then both $\CalC_\omega$ and $\CalC_\CR$ act on $\CalC_\CU$. The following corollary immediately
follows from~Lemmas~\ref{idempo} and~\ref{interna}.

\begin{cor}
  \label{morita}
  The actions of $\CalC_\omega$ and $\CalC_\CR$ on $\widetilde\CU'$ give rise to the equivalences
  $\CalC_\omega\iso\CalC_\CU$ and $\CalC_\CR\iso\CalC_\CU$. \hfill $\Box$
\end{cor}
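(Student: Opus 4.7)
The plan is to verify that the action functors $F_\omega\colon\CalC_\omega\to\CalC_\CU$, $X\mapsto X*\widetilde\CU'$, and $F_\CR\colon\CalC_\CR\to\CalC_\CU$, $Y\mapsto Y*\widetilde\CU'$, are both essentially surjective and fully faithful. Essential surjectivity is immediate: by construction $\CalC_\CU$ is the smallest full subcategory of $\CalD'\CW\modu^{G'_\bO}$ closed under direct sums and cones and containing $\widetilde\CU'$, and by~Lemma~\ref{idempo}(a) together with the identity $\widetilde\CU'=\widetilde\bomega_{\Gr_G}\boxtimes\BC[V_\bO]$ we have $\widetilde\bomega_{\Gr_G}*\widetilde\CU'\simeq\widetilde\CU'$. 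Thus $F_\omega$ carries the generator of $\CalC_\omega$ to the generator of $\CalC_\CU$, and since $F_\omega$ is exact and additive it is essentially surjective onto $\CalC_\CU$. The same argument with~Lemma~\ref{idempo}(b) handles $F_\CR$.

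For fully faithfulness we invoke the internal Hom adjunction for the action of $\mathrm{D}\modu^{G_\bO}(\Gr_G)$ on $\CalD'\CW\modu^{G'_\bO}$: for $X,Y\in\CalC_\omega$,
\[
\RHom_{\CalC_\CU}\bigl(X*\widetilde\CU',\,Y*\widetilde\CU'\bigr)
\;\simeq\;
\RHom_{\mathrm{D}\modu^{G_\bO}(\Gr_G)}\bigl(X,\,Y*\CH om(\widetilde\CU',\widetilde\CU')\bigr),
\]
where we have used that internal Hom of the acting category commutes with the action in the target variable on objects built from $\widetilde\bomega_{\Gr_G}$ by finitely many sums and cones. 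By~Lemma~\ref{interna}(a) the internal Hom is $\widetilde\bomega_{\Gr_G}$, so the right-hand side becomes $\RHom(X,Y*\widetilde\bomega_{\Gr_G})$. Now the idempotency $\widetilde\bomega_{\Gr_G}*\widetilde\bomega_{\Gr_G}\simeq\widetilde\bomega_{\Gr_G}$ from~Lemma~\ref{idempo}(a), propagated through sums and cones, shows that $Y*\widetilde\bomega_{\Gr_G}\simeq Y$ for every $Y\in\CalC_\omega$. Consequently $\RHom_{\CalC_\CU}(F_\omega X,F_\omega Y)\simeq\RHom_{\CalC_\omega}(X,Y)$, which is the desired fully faithfulness. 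The argument for $F_\CR$ is word-for-word identical after replacing~Lemma~\ref{interna}(a) by~Lemma~\ref{interna}(c) and~Lemma~\ref{idempo}(a) by~Lemma~\ref{idempo}(b).

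The only point requiring a moment of care is the commutation of internal Hom with the action, i.e.\ the isomorphism $\CH om_\CM(\widetilde\CU',Y*\widetilde\CU')\simeq Y*\CH om_\CM(\widetilde\CU',\widetilde\CU')$. This does not hold for arbitrary $Y$, but it does hold on the subcategory $\CalC_\omega$ (respectively $\CalC_\CR$) because this subcategory is generated from a single compact object by finitely many direct sums and cones, operations under which both sides of the proposed isomorphism commute. Granting this, the corollary follows directly from~Lemmas~\ref{idempo} and~\ref{interna} as sketched, so no real obstacle remains beyond the preparatory work already carried out.
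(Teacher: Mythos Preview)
Your approach matches the paper's, which records the corollary as an immediate consequence of Lemmas~\ref{idempo} and~\ref{interna}; you have simply unpacked the underlying Morita-type mechanism.

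Two minor points. First, in the final paragraph you call the generator ``a single compact object'', but neither $\widetilde\bomega_{\Gr_G}$ nor $\widetilde\CR_0$ is compact in its ambient (non-renormalized) category. This does no harm: what your argument actually uses is that both $Y\mapsto Y*\CH om(\widetilde\CU',\widetilde\CU')$ and $Y\mapsto\CH om(\widetilde\CU',Y*\widetilde\CU')$ commute with finite direct sums and cones in $Y$, which is automatic in a stable setting; compactness plays no role.

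Second, for $F_\CR$ you assert that $\widetilde\CR_0*\widetilde\CU'\simeq\widetilde\CU'$ follows by ``the same argument with Lemma~\ref{idempo}(b)''. But your $\CalC_\omega$ argument used the explicit factorization $\widetilde\CU'=\widetilde\bomega_{\Gr_G}\boxtimes\BC[V_\bO]$ together with the fact that the right $\mathrm{D}\modu^{G_\bO}(\Gr_G)$-action is through the first factor; the left $\mathrm{D}\modu_{-1/2}^{G_\bO}(\Gr_G)_0$-action is diagonal, so no such shortcut is available and Lemma~\ref{idempo}(b) alone is not enough. One must also invoke Lemma~\ref{interna}(c) (and Lemma~\ref{intern}(b)) to see that the evaluation map $\CH om(\widetilde\CU',\widetilde\CU')*\widetilde\CU'\to\widetilde\CU'$ is an isomorphism. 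This is still within the scope of ``follows from Lemmas~\ref{idempo} and~\ref{interna}'', so your overall strategy is sound; only the attribution in the $\CR$ case needs adjusting.
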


\subsection{Hecke eigenproperty}
\label{hecke}
Recall the classical Satake monoidal equivalence~\cite{mv}
\[\sH^\vee:=H^\bullet_\DR(\Gr_G,-)\colon\mathrm{D}\modu^{G_\bO,\heartsuit}(\Gr_G)\to\Rep(G^\vee).\]
Also, recall the twisted Satake monoidal equivalence~\cite{lys}
\[\sH:=H^\bullet_\DR(\Gr_G,-\stackrel{!}{\otimes}\CR)\colon
\mathrm{D}\modu^{G_\bO,\heartsuit}_{1/2}(\Gr_G)\to\Rep(G).\]
Twisting by the determinant line bundle $\CalD$ defines a monoidal equivalence
$\mathrm{D}\modu^{G_\bO,\heartsuit}_{-1/2}(\Gr_G)\iso\mathrm{D}\modu^{G_\bO,\heartsuit}_{1/2}(\Gr_G)$.
We will keep the same notation for the composed twisted Satake monoidal equivalence
\[\sH:=H^\bullet_\DR(\Gr_G,-\otimes\CalD\stackrel{!}{\otimes}\CR)\colon
\mathrm{D}\modu^{G_\bO,\heartsuit}_{-1/2}(\Gr_G)\to\Rep(G).\]
For $\CF_0\in\mathrm{D}\modu^{G_\bO,\heartsuit}(\Gr_G)$ and
$\CF_1\in\mathrm{D}\modu^{G_\bO,\heartsuit}_{1/2}(\Gr_G)$, we have isomorphisms of $\BC[\Xi_\fg]$-modules
(notation of~\S\ref{theta})
\[H^\bullet_{G_\bO,\DR}(\Gr_G,\CF_0)=\bkappa^{\gvee}(\sH^\vee(\CF_0)\otimes\Sym^\bullet(\gvee[-2])),\ \on{and}\]
\[H^\bullet_{G_\bO,\DR}(\Gr_G,\CF_1\stackrel{!}{\otimes}\CR)=
\bkappa^\fg(\sH(\CF_1)\otimes\Sym^\bullet(\fg[-2])).\]


\begin{lem}
  \label{eigenvalues}
For $\CF_0\in\mathrm{D}\modu^{G_\bO,\heartsuit}(\Gr_G)$, and
$\CF_{1,0}\in\mathrm{D}\modu^{G_\bO,\heartsuit}_{-1/2}(\Gr_G)_0$, and
$\CF_{1,1}\in\mathrm{D}\modu^{G_\bO,\heartsuit}_{-1/2}(\Gr_G)_1$
we have natural isomorphisms of $\BC[\Xi_\fg]$-modules

\textup{(a)} $\CF_0*\widetilde\bomega_{\Gr_G}\cong
H^\bullet_{G_\bO,\DR}(\Gr_G,\CF_0)\otimes_{\BC[\Sigma_\fg]}\widetilde\bomega_{\Gr_G}$.

\textup{(b)} $\CF_{1,0}*\widetilde\CR_0\cong
H^\bullet_{G_\bO,\DR}(\Gr_G,\CF_{1,0}\otimes\CalD\stackrel{!}{\otimes}\CR)
\otimes_{\BC[\Sigma_\fg]}\widetilde\CR_0$, and\\
$\CF_{1,0}*\widetilde\CR_1\cong H^\bullet_{G_\bO,\DR}(\Gr_G,\CF_{1,0}\otimes\CalD\stackrel{!}{\otimes}\CR)
\otimes_{\BC[\Sigma_\fg]}\widetilde\CR_1$.

\textup{(c)} $\CF_{1,1}*\widetilde\CR_0\cong
H^\bullet_{G_\bO,\DR}(\Gr_G,\CF_{1,1}\otimes\CalD\stackrel{!}{\otimes}\CR)
\otimes_{\BC[\Sigma_\fg]}\widetilde\CR_1$, and\\
$\CF_{1,1}*\widetilde\CR_1\cong H^\bullet_{G_\bO,\DR}(\Gr_G,\CF_{1,1}\otimes\CalD\stackrel{!}{\otimes}\CR)
\otimes_{\BC[\Sigma_\fg]}\widetilde\CR_0$.

These isomorphisms are compatible with the monoidal structures of the (twisted) Satake equivalences.
\end{lem}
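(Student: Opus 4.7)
The plan is to transport all four formulas to the coherent side via the (twisted) Satake equivalences of~\S\ref{satake}, after which they become direct calculations on the formal neighbourhood of the regular orbit, with the twisted cases reducing to the untwisted one by endoscopy and parity considerations.

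\textbf{Part (a).} Set $V:=\sH^\vee(\CF_0)$, so under the monoidal equivalence $\bbeta^{\gvee}$ of~\eqref{nilp}, the sheaf $\CF_0$ corresponds to the free $\Sym^\bullet(\gvee[-2])$-module $V\otimes\Sym^\bullet(\gvee[-2])$, and $\widetilde\bomega_{\Gr_G}$ corresponds to $j^\reg_*i_\reg^!\CO_{\gvee[2]}$ by the proof of~Lemma~\ref{idempo}(a). Monoidality sends the convolution $\CF_0*\widetilde\bomega_{\Gr_G}$ to the derived tensor product of these two modules over $\Sym^\bullet(\gvee[-2])$. This tensor product is set-theoretically supported on the regular orbit $\BO_\reg$, and on $\BO_\reg$ the functor of $G^\vee$-equivariant restriction is identified, via the Kostant slice $\Sigma_\fg\hookrightarrow\BO_\reg$, with the Kostant-Whittaker reduction $\bkappa^{\gvee}$. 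Thus the tensor product rewrites as $\bkappa^{\gvee}(V\otimes\Sym^\bullet(\gvee[-2]))\otimes_{\BC[\Sigma_\fg]}j^\reg_*i_\reg^!\CO_{\gvee[2]}$, and~\eqref{glob coh untwisted} identifies the first factor with $H^\bullet_{G_\bO,\DR}(\Gr_G,\CF_0)$.

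\textbf{Part (b), the $\widetilde\CR_0$ case.} Invoke the endoscopy of~\cite{dlyz} recalled before~Lemma~\ref{intern}: the monoidal category $\mathrm{D}\modu_{-1/2}^{G_\bO}(\Gr_G)_0$ is equivalent to $\mathrm{D}\modu^{\on{Spin}(2n+1,\bO)}(\Gr_{\on{Spin}(2n+1)})$, with $\CR_0$ going to the dualizing sheaf and hence $\widetilde\CR_0$ to $\widetilde\bomega_{\Gr_{\on{Spin}(2n+1)}}$. Applying part (a) to $\on{Spin}(2n+1)$ (whose dual Lie algebra is again $\fso_{2n+1}\cong\gvee$, so the Kostant slice $\Sigma_\fg$ is unchanged) yields the first formula of (b); the cohomology groups are matched using the definition $\sH(-)=H^\bullet_\DR(\Gr_G,-\otimes\CalD\stackrel{!}{\otimes}\CR)$, which is exactly the untwisting performed by endoscopy.

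\textbf{Parts (b) and (c), the remaining cases.} For the formulas involving $\widetilde\CR_1$, and for the interaction with the odd component $\CF_{1,1}$, work directly on the coherent side via $\bbeta^\fg$ of~\eqref{nilp twist}. By the remark after~Lemma~\ref{idempo}, $\widetilde\CR_0$ and $\widetilde\CR_1$ correspond respectively to $j^\reg_*i_\reg^!\CO_{\fg[2]}$ and $j^\reg_*(\CL\otimes i_\reg^!\CO_{\fg[2]})$, where $\CL$ is the $G$-equivariant line bundle on $\BO_\reg$ on which the center $\{\pm1\}\subset G$ acts nontrivially. The parity decomposition $\mathrm{D}\modu_{-1/2}^{G_\bO}(\Gr_G)=(\cdots)_0\oplus(\cdots)_1$ matches, under $\bbeta^\fg$, the $\{\pm1\}$-eigendecomposition of $G$-equivariant modules: $\CF_{1,0}$ corresponds to an eigenvalue-$(+1)$ module, and $\CF_{1,1}$ to an eigenvalue-$(-1)$ module. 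Repeating the calculation of part (a) in this setting, and using $\CL\otimes\CL\cong\CO_{\BO_\reg}$ so that tensoring with $\CL$ swaps the two eigencomponents, yields all four formulas; the cohomology is identified using~\eqref{glob coh} in place of~\eqref{glob coh untwisted}, and compatibility with monoidal structures follows from monoidality of the Satake equivalences.

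The main obstacle is the parity bookkeeping in the last step: one must verify that the splitting by parity of $G_\bO$-orbits on the sheaf side agrees, under $\bbeta^\fg$, with the $\{\pm 1\}$-central character decomposition on the coherent side, since this is what forces the swap $\widetilde\CR_0\leftrightarrow\widetilde\CR_1$ appearing in (c). Once the swap is accounted for, the rest is the same computation as in (a) performed with $\CL$-twists.
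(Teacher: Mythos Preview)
Your approach is the same as the paper's: transport everything to the coherent side via the (twisted) derived Satake equivalences, where $\widetilde\bomega_{\Gr_G}$ and $\widetilde\CR_0$ become $j^\reg_*i_\reg^!\CO_{\gvee[2]}$ and $j^\reg_*i_\reg^!\CO_{\fg[2]}$, and the computation reduces to tensoring a free module against an object supported on $\BO_\reg$. The paper's proof is a single sentence invoking exactly this; you have spelled out the details.

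One small correction: in your endoscopy step for part~(b), the Langlands dual of $\on{Spin}(2n+1)$ has Lie algebra $\fsp(2n)=\fg$, not $\fso(2n+1)$. Your conclusion that $\Sigma_\fg$ is unchanged is still right, since $\Sigma_\fg\cong\Sigma_{\gvee}$, so the slip is harmless. Also note that the endoscopy detour is not really needed: the direct coherent argument you give for the remaining cases of (b) and (c) already covers the first case of (b), which is how the paper handles it uniformly.
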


\begin{proof}
  Follows from the fact used in the proof of~Lemma~\ref{idempo} that under the (twisted) derived Satake
  equivalence, $\widetilde\bomega_{\Gr_G}$ (resp.\ $\widetilde\CR_0$) corresponds to
  $j^\reg_*i_\reg^!\CO_{\gvee[2]}$ (resp.\ $j^\reg_*i_\reg^!\CO_{\fg[2]}$).
\end{proof}

Recall the convolution action of~\S\ref{mirab}:
\[\mathrm{D}\modu_{-1/2}^{G_\bO,\lc}(\Gr_G)\otimes\mathrm{D}\modu^{G_\bO,\lc}(\Gr_G)\circlearrowright
\CalD'\CW\modu^{G'_\bO}.\]

\begin{prop}
  \label{eigen}
  For $\CF_0\in\mathrm{D}\modu^{G_\bO,\heartsuit}(\Gr_G)$ and
  $\CF_1\in\mathrm{D}\modu^{G_\bO,\heartsuit}_{-1/2}(\Gr_G)$
we have a natural isomorphism of $\BC[\Xi_\fg]$-modules
\[\CF_1*\CU'*\CF_0\cong H^\bullet_{G_\bO,\DR}(\Gr_G,\CF_1\otimes\CalD\stackrel{!}{\otimes}\CR)
\otimes_{\BC[\Sigma_\fg]} H^\bullet_{G_\bO,\DR}(\Gr_G,\CF_0)\otimes_{\BC[\Sigma_\fg]}\CU'.\]
These isomorphisms are compatible with the monoidal structures of the (twisted) Satake equivalences.
\end{prop}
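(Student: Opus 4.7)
Work in the realization $\CalD'\CW\modu^{G'_\bO}$ of Remark \ref{prime}, where $\CU' = \bomega_{\Gr_G} \boxtimes \BC[V_\bO]$ by \eqref{u'}. The strategy is to establish the two one-sided Hecke-eigen isomorphisms
\[\CU'*\CF_0\cong H^\bullet_{G_\bO,\DR}(\Gr_G,\CF_0)\otimes_{\BC[\Sigma_\fg]}\CU',\]
\[\CF_1*\CU'\cong H^\bullet_{G_\bO,\DR}(\Gr_G,\CF_1\otimes\CalD\stackrel{!}{\otimes}\CR)\otimes_{\BC[\Sigma_\fg]}\CU'\]
separately, and then combine them using commutativity of the left action of $\mathrm{D}\modu_{-1/2}^{G_\bO}(\Gr_G)$ and the right action of $\mathrm{D}\modu^{G_\bO}(\Gr_G)$.

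For the first isomorphism, since the right action of $\CF_0\in\mathrm{D}\modu^{G_\bO}(\Gr_G)$ on $\CalD'\CW\modu^{G'_\bO}$ factors through convolution on the $\mathrm{D}\modu(\Gr_G)$-factor alone, the claim reduces to the Hecke-eigen property $\bomega_{\Gr_G}*\CF_0\cong\sH^\vee(\CF_0)\otimes_{\BC[\Sigma_\fg]}\bomega_{\Gr_G}$. This follows from the untwisted derived Satake equivalence \eqref{nilp} together with the Kostant-Whittaker isomorphism \eqref{glob coh untwisted}; alternatively, it can be extracted from Lemma \ref{eigenvalues}(a) (the tilded version) by unwinding the definition $\widetilde\bomega_{\Gr_G}=R\CH om_{\BC[\Xi_\fg]}(\BC[\Sigma_\fg],\bomega_{\Gr_G})$ in \eqref{tilde omega}.

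For the second isomorphism, the left action of $\mathrm{D}\modu_{-1/2}^{G_\bO}(\Gr_G)$ on $\CalD'\CW\modu^{G'_\bO}$ is diagonal on $\CU'=\bomega_{\Gr_G}\boxtimes\BC[V_\bO]$, combining untwisted convolution on $\mathrm{D}\modu(\Gr_G)$ with the twisted convolution on $\CW\modu$ through $\BC[V_\bO]$. The latter is controlled by the identification $\CH om_{\mathrm{D}\modu_{-1/2}^{G_\bO}(\Gr_G)}(\BC[V_\bO],\BC[V_\bO])=\CR$ of \cite[Proposition 3.2.1]{bdfrt} (used in Lemma \ref{interna}(b)), so the Hecke-eigen property is read off from Lemma \ref{eigenvalues}(b, c) and the twisted Satake monoidal equivalence $\sH$ of \S\ref{hecke}. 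Combining the two one-sided isomorphisms via commutativity yields the claimed formula, and the compatibility with the monoidal structures of $\sH, \sH^\vee$ is inherited from each one-sided isomorphism, which is Satake-monoidal by construction.

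The principal obstacle lies in the transition between the tilded objects $\widetilde\bomega_{\Gr_G}, \widetilde\CR, \widetilde\CU'$ (for which the Hecke-eigen properties are directly available from Lemmas \ref{eigenvalues} and \ref{interna}) and the ``big'' objects $\bomega_{\Gr_G}, \CR, \CU'$ appearing in the statement. Concretely, one must verify that the functor $R\CH om_{\BC[\Xi_\fg]}(\BC[\Sigma_\fg],-)$ relating them (cf.\ \eqref{tilde omega}, \eqref{tilde R}, \eqref{tilde U}) is compatible with the $\otimes_{\BC[\Sigma_\fg]}$ appearing in the eigenvalues; equivalently, one must show that $\bomega_{\Gr_G}$ and $\CR$ are themselves Hecke eigensheaves with the same eigenvalues as their tilded counterparts.
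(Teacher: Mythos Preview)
Your plan is correct and tracks the paper's argument closely: both routes pass through the tilded objects and Lemma~\ref{eigenvalues}. The difference lies only in how the ``principal obstacle'' you flag at the end is resolved. The paper does not attempt to establish Hecke-eigen properties for $\bomega_{\Gr_G}$ or $\CR$ directly. Instead it first combines Corollary~\ref{morita} with Lemma~\ref{eigenvalues} to obtain the tilded identity
\[\CF_1*\widetilde\CU'*\CF_0\cong H^\bullet_{G_\bO,\DR}(\Gr_G,\CF_1\otimes\CalD\overset{!}{\otimes}\CR)\otimes_{\BC[\Sigma_\fg]} H^\bullet_{G_\bO,\DR}(\Gr_G,\CF_0)\otimes_{\BC[\Sigma_\fg]}\widetilde\CU',\]
and then untildes in a single stroke by right-convolving both sides with $\bomega_{\Gr_G}$, using only the identity $\bomega_{\Gr_G}*\widetilde\bomega_{\Gr_G}=\bomega_{\Gr_G}$ (hence $\CU'\cong\bomega_{\Gr_G}*\widetilde\CU'$) together with commutativity of the right $\mathrm{D}\modu^{G_\bO}(\Gr_G)$-action. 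Your route via separate one-sided eigen-properties for the untilded $\bomega_{\Gr_G}$ and $\CR$ also works, but requires the extra inputs you name; the paper's trick simply sidesteps them.
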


\begin{proof}
  From~Corollary~\ref{morita} and~Lemma~\ref{eigenvalues} we deduce
  \[\CF_1*\widetilde\CU'*\CF_0\cong
  H^\bullet_{G_\bO,\DR}(\Gr_G,\CF_1\otimes\CalD\stackrel{!}{\otimes}\CR)
\otimes_{\BC[\Sigma_\fg]} H^\bullet_{G_\bO,\DR}(\Gr_G,\CF_0)\otimes_{\BC[\Sigma_\fg]}\widetilde\CU'.\]
But $\bomega_{\Gr_G}=\bomega_{\Gr_G}*\widetilde\bomega_{\Gr_G}$,
hence $\CU'\cong\bomega_{\Gr_G}*\widetilde\CU'$, and the desired result follows.
\end{proof}

\begin{cor}
  \label{eigenval}
  \textup{(a)}
  For any $\CE\in\CalD'\CW\modu^{G'_\bO,\lc},\ \CF_0\in\mathrm{D}\modu^{G_\bO,\heartsuit}(\Gr_G)$ and
  $\CF_1\in\mathrm{D}\modu^{G_\bO,\heartsuit}_{-1/2}(\Gr_G)$, we have a natural isomorphism
  of $\BC[\Xi_\fg]$-modules
  \begin{multline*}H^\bullet_{G_\bO,\DR}(\CF_1*\CE*\CF_0)\\
  \cong H^\bullet_{G_\bO,\DR}(\Gr_G,\CF_1\otimes\CalD\stackrel{!}{\otimes}\CR)
  \otimes_{\BC[\Sigma_\fg]}H^\bullet_{G_\bO,\DR}(\Gr_G,\CF_0)\otimes_{\BC[\Sigma_\fg]}H^\bullet_{G_\bO,\DR}(\CE).
  \end{multline*}
  These isomorphisms are compatible with the monoidal structures of the (twisted) Satake equivalences.

  \textup{(b)} Recall an equivalent realization
  \[\CalD'\CW\modu^{G'_\bO,\lc}\cong\CalD\CW\modu^{G_\bO,\lc}\circlearrowleft
  \mathrm{D}\modu^{G_\bO,\heartsuit}(\Gr_G)\otimes\mathrm{D}\modu_{1/2}^{G_\bO,\heartsuit}(\Gr_G).\]
  Then equivalently, for any
  $\CA\in\CalD\CW\modu^{G_\bO,\lc},\ \CG_0\in\mathrm{D}\modu^{G_\bO,\heartsuit}(\Gr_G)$ and
  $\CG_1\in\mathrm{D}\modu^{G_\bO,\heartsuit}_{1/2}(\Gr_G)$, we have a natural isomorphism
  of $\BC[\Xi_\fg]$-modules
  \begin{multline*}H^\bullet_{G_\bO,\DR}(\CG_0*\CA*\CG_1)\\
  \cong H^\bullet_{G_\bO,\DR}(\Gr_G,\CG_1\stackrel{!}{\otimes}\CR)
  \otimes_{\BC[\Sigma_\fg]}H^\bullet_{G_\bO,\DR}(\Gr_G,\CG_0)\otimes_{\BC[\Sigma_\fg]}H^\bullet_{G_\bO,\DR}(\CA).
  \end{multline*}
\end{cor}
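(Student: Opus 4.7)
The plan is to derive both statements from the eigen-property of the universal object $\CU'$ established in Proposition~\ref{eigen}, combined with the pairing realization of equivariant de Rham cohomology from~\eqref{pairing}. Throughout, I would work in the realization $\CalD'\CW\modu^{G'_\bO,\lc}$ of~\S\ref{mirab}; part (b) will then be deduced from (a) by transport under the equivalence of \emph{loc.\ cit.}

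For part (a), I first use~\eqref{pairing} to rewrite the target cohomology as a pairing:
\[H^\bullet_{G_\bO,\DR}(\CF_1 * \CE * \CF_0) \cong \langle \CF_1 * \CE * \CF_0,\, \CU'\rangle.\]
The next step, which I expect to be the main technical point, is an adjunction allowing one to transfer the Hecke convolutions across the bilinear pairing. Since the pairing is defined by restriction along the diagonal $\Delta\colon\Gr_G\hookrightarrow\Gr_G\times\Gr_G$ followed by equivariant cohomology and $\Hom$ out of $\nabla$, the diagonal $G_\bF$-equivariance together with base change and standard properties of the convolution diagram $\Gr_G\wtimes\Gr_G\to\Gr_G$ should produce a natural isomorphism
\[\langle \CF_1 * \CE * \CF_0,\, \CU'\rangle \cong \langle \CE,\, \CF_1^\vee * \CU' * \CF_0^\vee\rangle,\]
where $\CF_i^\vee$ denotes the Verdier dual composed with the inversion $g\mapsto g^{-1}$. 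Since all irreducible representations of both $\SO(V_0)$ and $\Sp(V)$ are self-dual, and the Chevalley involution $\fC$ on $G=\Sp(2n)$ is trivial (cf.~\S\ref{mirab}), we may replace $\CF_i^\vee$ by $\CF_i$ up to canonical isomorphism of their Satake images.

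Once the adjunction is in place, Proposition~\ref{eigen} computes $\CF_1*\CU'*\CF_0$ as the tensor product of cohomology factors with $\CU'$ over $\BC[\Sigma_\fg]$. The cohomology factors are $\BC[\Sigma_\fg]$-modules (by the Kostant--Whittaker interpretation of~\S\ref{hecke}) while $\CU'$ is a $\BC[\Xi_\fg]$-module via~\eqref{action}; since the pairing $\langle\CE,-\rangle$ is $\BC[\Xi_\fg]$-linear, it commutes with both tensor factors. Applying~\eqref{pairing} once more to identify $\langle \CE,\CU'\rangle\cong H^\bullet_{G_\bO,\DR}(\CE)$ then assembles the pieces and yields the formula of part (a); the monoidal compatibility with the Satake equivalences is inherited from that in Proposition~\ref{eigen}.

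For part (b), the equivalence $\CalD'\CW\modu^{G'_\bO,\lc}\cong\CalD\CW\modu^{G_\bO,\lc}$ of~\S\ref{mirab} intertwines the right action of $\mathrm{D}\modu^{G_\bO}(\Gr_G)$ on $\CalD\CW\modu^{G_\bO}$ with the left action on $\CalD'\CW\modu^{G'_\bO}$ composed with the (trivial) Chevalley involution, and it intertwines the right action of $\mathrm{D}\modu_{1/2}^{G_\bO}(\Gr_G)$ on $\CalD\CW\modu^{G_\bO}$ with the left action of $\mathrm{D}\modu_{-1/2}^{G_\bO}(\Gr_G)$ on $\CalD'\CW\modu^{G'_\bO}$ composed with twisting by $\CalD^{-1}$. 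Setting $\CF_1:=\CG_1\otimes\CalD^{-1}$ and $\CF_0:=\CG_0$ produces $\CF_1\otimes\CalD\stackrel{!}{\otimes}\CR=\CG_1\stackrel{!}{\otimes}\CR$, so the formula of part (a) translates directly into the one of part (b). The principal obstacle in the entire argument remains the adjunction step: one must carefully verify that the bilinear pairing defined via $\Delta^!$ on $\Gr_G\times\Gr_G$ admits the required adjointness under (twisted) Hecke convolution. This should follow by tracking the $G_\bF$-equivariance through the convolution and diagonal diagrams, extending the calculations behind~\cite[Proposition 3.2.1]{bdfrt} from the object $\CU'$ itself to arbitrary locally compact objects of the mirabolic category.
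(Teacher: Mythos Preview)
Your proposal is correct and follows essentially the same route as the paper: the paper's proof consists of the single sentence ``Compare Proposition~\ref{eigen} and~\eqref{pairing}'', and you have unpacked precisely what that comparison entails, namely moving the Hecke operators across the pairing $\langle -,\CU'\rangle$ so that Proposition~\ref{eigen} can be applied to the second slot. Your identification of the adjunction step as the one nontrivial verification, and your justification for dropping the duals via self-duality in types $B_n,C_n$, are both appropriate.
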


\begin{proof}
  Compare Proposition~\ref{eigen} and~\eqref{pairing}.
\end{proof}

In particular, taking \[\CA=E_0\in\CalD\CW\modu^{G_\bO,\lc},\
\CG_1=\IC_{\lambda_1}\in\mathrm{D}\modu^{G_\bO,\heartsuit}_{1/2}(\Gr_G),\
\CG_0=\IC_{\lambda_0}\in\mathrm{D}\modu^{G_\bO,\heartsuit}(\Gr_G),\] we obtain

\begin{cor}
  \label{cohomol}
  For $\lambda_0\in\Lambda_0^+,\ \lambda_1\in\Lambda_1^+$, we have a canonical isomorphism
  of $\BC[\Xi_\fg]$-modules
  \[\bkappa^\fg(V_{\lambda_1}\otimes\Sym^\bullet(\fg[-2]))\otimes_{\BC[\Sigma_\fg]}
  \bkappa^{\gvee}(V_{\lambda_0}\otimes\Sym^\bullet(\gvee[-2]))\iso
  H^\bullet_{G_\bO,\DR}(\IC_{\lambda_0}*E_0*\IC_{\lambda_1}).\]
\end{cor}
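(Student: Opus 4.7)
The statement is an immediate specialization of Corollary~\ref{eigenval}(b), so the plan is essentially to plug in the right objects and identify the three equivariant cohomology groups appearing on the right-hand side.

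First I would take $\CA=E_0\in\CalD\CW\modu^{G_\bO,\lc}$, $\CG_0=\IC_{\lambda_0}\in\mathrm{D}\modu^{G_\bO,\heartsuit}(\Gr_G)$, and $\CG_1=\IC_{\lambda_1}\in\mathrm{D}\modu_{1/2}^{G_\bO,\heartsuit}(\Gr_G)$, and apply Corollary~\ref{eigenval}(b) to obtain
\[
H^\bullet_{G_\bO,\DR}(\IC_{\lambda_0}*E_0*\IC_{\lambda_1})\cong H^\bullet_{G_\bO,\DR}(\Gr_G,\IC_{\lambda_1}\stackrel{!}{\otimes}\CR)\otimes_{\BC[\Sigma_\fg]}H^\bullet_{G_\bO,\DR}(\Gr_G,\IC_{\lambda_0})\otimes_{\BC[\Sigma_\fg]}H^\bullet_{G_\bO,\DR}(E_0).
\]
Next, I would identify each of the three factors on the right using the (twisted) Satake formulas recalled in~\S\ref{hecke}. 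The twisted Satake equivalence $\sH$ sends $\IC_{\lambda_1}$ to $V_{\lambda_1}\in\Rep(G)$, so the formula $H^\bullet_{G_\bO,\DR}(\Gr_G,\CF_1\stackrel{!}{\otimes}\CR)=\bkappa^\fg(\sH(\CF_1)\otimes\Sym^\bullet(\fg[-2]))$ gives the first factor as $\bkappa^\fg(V_{\lambda_1}\otimes\Sym^\bullet(\fg[-2]))$. Similarly, the untwisted Satake equivalence $\sH^\vee$ sends $\IC_{\lambda_0}$ to $V_{\lambda_0}\in\Rep(G^\vee)$, yielding $H^\bullet_{G_\bO,\DR}(\Gr_G,\IC_{\lambda_0})\cong\bkappa^{\gvee}(V_{\lambda_0}\otimes\Sym^\bullet(\gvee[-2]))$.

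It remains to check that $H^\bullet_{G_\bO,\DR}(E_0)\cong\BC[\Sigma_\fg]$ as a $\BC[\Xi_\fg]$-module, where the $\BC[\Xi_\fg]$-action factors through the projection $\BC[\Xi_\fg]\twoheadrightarrow\BC[\Sigma_\fg]$ corresponding to the zero section of $\Xi_\fg=T\Sigma_\fg$. This should follow from unwinding the definition~\eqref{cohom'}: under the equivalence with the Kirillov/Heisenberg model, $E_0=\IC_0\otimes\BC[V_\bO]$ corresponds to an object whose $\imath^0$-restriction is $\IC_0\in\mathrm{D}\modu^{G_\bO,\lc}(\Gr_G)$, the skyscraper at the base point. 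Hence $H^\bullet_{G_\bO,\DR}(E_0)=H^\bullet_{G_\bO,\DR}(\Gr_G,\IC_0)\cong H^\bullet_G(\pt)\cong\BC[\Sigma_\fg]$, the structure as a module over $\BC[\Xi_\fg]\cong H^\bullet_{G_\bO}(\Gr_G)$ being induced by the pullback along $\pt\to\Gr_G$ (Ginzburg's theorem, recalled in~\S\ref{theta}).

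Once these three identifications are in hand, the last tensor factor $\otimes_{\BC[\Sigma_\fg]}\BC[\Sigma_\fg]$ is trivial, and we are left with exactly the claimed isomorphism
\[
\bkappa^\fg(V_{\lambda_1}\otimes\Sym^\bullet(\fg[-2]))\otimes_{\BC[\Sigma_\fg]}\bkappa^{\gvee}(V_{\lambda_0}\otimes\Sym^\bullet(\gvee[-2]))\iso H^\bullet_{G_\bO,\DR}(\IC_{\lambda_0}*E_0*\IC_{\lambda_1}).
\]
The only potential subtlety, and the step I would be most careful about, is the computation $H^\bullet_{G_\bO,\DR}(E_0)\cong\BC[\Sigma_\fg]$ (in particular that $\imath^0 E_0=\IC_0$ under the chosen realization and that the $\BC[\Xi_\fg]$-module structure reduces through the zero section); everything else is bookkeeping about the monoidal compatibility of (twisted) Satake. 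No new heavy machinery beyond Corollary~\ref{eigenval} and~\S\ref{hecke} is required.
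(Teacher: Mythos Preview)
Your proposal is correct and follows exactly the paper's approach: the corollary is introduced with the words ``In particular, taking $\CA=E_0,\ \CG_1=\IC_{\lambda_1},\ \CG_0=\IC_{\lambda_0}$, we obtain,'' i.e.\ it is a direct specialization of Corollary~\ref{eigenval}(b). You in fact supply more detail than the paper does, notably the identification $H^\bullet_{G_\bO,\DR}(E_0)\cong\BC[\Sigma_\fg]$, which the paper leaves implicit.
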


\subsection{Injectivity}

\begin{lem}
  \label{injective}
  For $\lambda_0\in\Lambda_0^+,\ \lambda_1\in\Lambda_1^+$, the natural morphism
  \begin{multline*}\Ext_{\CalD\CW\modu^{G_\bO,\lc}}(E_0,\IC_{\lambda_0}*E_0*\IC_{\lambda_1})\\
    \to\Hom_{\BC[\Xi_\fg]}(H^\bullet_{G_\bO,\DR}(\Gr_G,E_0),
    H^\bullet_{G_\bO,\DR}(\Gr_G,\IC_{\lambda_0}*E_0*\IC_{\lambda_1})
  \end{multline*}
  is injective.
\end{lem}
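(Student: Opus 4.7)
Since $H^\bullet_{G_\bO,\DR}(E_0) \cong \BC[\Xi_\fg]$ is free of rank one as a $\BC[\Xi_\fg]$-module (this is Corollary~\ref{cohomol} with $\lambda_0 = \lambda_1 = 0$, resting on Ginzburg's theorem quoted in \S\ref{theta}), the target of the morphism of the lemma canonically simplifies to $H^\bullet_{G_\bO,\DR}(Y)$ for $Y := \IC_{\lambda_0}*E_0*\IC_{\lambda_1}$, and the morphism itself sends $f \in \Ext^\bullet(E_0, Y)$ to $H^\bullet(f)(1)$, i.e.\ to the push-forward of the fundamental class of $E_0$. So the plan is to show that no nonzero $\Ext$-class pushes the fundamental class to zero.

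The strategy I would follow is the one from the proof of Lemma~\ref{formality}. Through the equivalence $\CalD\CW\modu^{G_\bO,\lc} \cong \CK ir_{\on{constr}}$ of \S\ref{constr}, I would transfer the problem to the constructible Kirillov model and then to its $\BF_q$-incarnation $\CK ir_{\on{constr},\BF_q}$, where both $E_0$ and the convolutions $\IC_{\lambda_0}*E_0*\IC_{\lambda_1}$ acquire canonical pure Weil structures — this uses the purity of $\IC_{\lambda_0}, \IC_{\lambda_1}$ from geometric Satake combined with the purity of the Lafforgue theta-sheaf underlying the $\chi$-twisted realization. In this model $E_0$ becomes (up to shift) a skyscraper at the base point $0 \in \Gr_G \times \on{Heis}$ twisted by the Artin--Schreier character, so that $\Ext^\bullet(E_0, Y)$ is computed by the $!$-costalk of $\imath^0 Y$ at $0 \in \Gr_G$.

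With this identification, the morphism in question becomes the natural inclusion of the costalk at $0$ into the $E_\infty$-page of the weight spectral sequence converging to $H^\bullet_{G_\bO,\DR}(Y)$. Purity forces this spectral sequence to degenerate strictly, exhibiting the costalk as a direct summand of the associated graded; in particular, no Ext class can vanish on passage to cohomology. Alternatively, one can phrase this step internally to the paper: combine Corollary~\ref{morita} (to replace $E_0$ by $\widetilde{\CU}'$ up to idempotent completion) with the eigenvalue formula of Corollary~\ref{cohomol}, which expresses the cohomology as a Kostant--Whittaker-reduced tensor product and, in particular, is free of specific rank over $\BC[\Sigma_\fg]$ in each Satake-isotypic component.

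The main obstacle is the careful matching of the push-forward map with the costalk-to-cohomology restriction in the $\BF_q$-setting. While Lemma~\ref{formality} sets this up for $\Ext(E_0, E_0)$, the present lemma requires the analogous compatibility for the Satake-twisted targets $\IC_{\lambda_0}*E_0*\IC_{\lambda_1}$, and one must check that the costalk indeed lands faithfully in the expected weight piece of $H^\bullet_{G_\bO,\DR}(Y)$. This last point will follow from the semisimplicity (as a $\BC[\Xi_\fg]$-module) of the explicit Kostant--Whittaker expression in Corollary~\ref{cohomol}, together with the compatibility of the pure Weil structures on $\IC_{\lambda_0}, \IC_{\lambda_1}$ with convolution.
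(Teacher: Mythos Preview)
Your identification of the source with the costalk at the base point and your instinct to invoke purity are both correct and match the paper's opening moves (in particular \eqref{hyp res} and the freeness over $\BC[\Sigma_\fg]$ that lets one pass to non-equivariant cohomology). However, the crucial step---that the costalk actually injects into global cohomology---does not follow from purity alone, and your appeal to a ``weight spectral sequence'' is not a proof: purity of $Y$ does not by itself force the costalk-to-cohomology map to be injective, since the differentials in the Cousin (stratification) spectral sequence are not a priori weight-increasing in the way you need. Your alternative route via Corollary~\ref{morita} and Corollary~\ref{cohomol} also does not touch this point; those results compute the target, not the injectivity of the map into it.

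The paper closes this gap by a different and more concrete mechanism. Using Lemma~\ref{left action U} (with adjunction moving $\IC_{\lambda_0}$ across), the Ext group is rewritten as $H^\bullet_{G_\bO,\DR}(\Gr_G,\IC_{\lambda_0}\overset{!}{\otimes}\IC_{\lambda_1}\overset{!}{\otimes}\CR)$, which places the problem entirely on $\Gr_G$ with the theta-sheaf $\CR$ appearing explicitly. The injectivity of the pullback-along-the-diagonal map $H^\bullet_\DR(\Gr_G,\IC_{\lambda_0}\overset{!}{\otimes}\IC_{\lambda_1}\overset{!}{\otimes}\CR)\to H^\bullet_\DR(\Gr_G\times\Gr_G,\IC_{\lambda_0}\boxtimes(\IC_{\lambda_1}\overset{!}{\otimes}\CR))$ is then established by a \emph{parity vanishing} argument: the costalks of $\CR_0$ (resp.\ $\CR_1$) at even (resp.\ odd) $G_\bO$-orbits live in even (resp.\ odd) cohomological degrees, and all $G_\bO$-orbits in $\Gr_G$ are even-dimensional, so the Cousin spectral sequences on both sides degenerate for parity reasons, reducing the question to an orbit-by-orbit check that is immediate. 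This parity input from the structure of $\CR$ is the missing ingredient in your argument.
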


\begin{proof}
  Note that the line $\Gr_G^0\times\{0\}\times\BG_a\subset\Gr_G\times\on{Heis}$ is a connected component
  of the fixed point set $(\Gr_G\times\on{Heis})^{\BG_m}$ of the loop rotation. Let us denote by
  $\CM^{\lambda_0,\lambda_1}$ the
  $\mathrm{D}\modu^{G_\bO\ltimes V_\bO\times\BG_a,\chi,\lc}(\Gr_G\times\on{Heis})$-counterpart
  (see~Remark~\ref{prime}) of
  the sheaf $\IC_{\lambda_0}*E_0*\IC_{\lambda_1}$. Then $\CM^{\lambda_0,\lambda_1}$ is semisimple and
  equivariant with respect to the loop rotation. Furthermore, we have a canonical isomorphism
  \begin{equation}
    \label{hyp res}
    \Ext_{\CalD\CW\modu^{G_\bO,\lc}}(E_0,\IC_{\lambda_0}*E_0*\IC_{\lambda_1})
    \cong\varPhi_0\CM^{\lambda_0,\lambda_1},
  \end{equation}
  where $\varPhi_0$ stands for the $G_\bO$-equivariant De Rham cohomology of the
  hyperbolic restriction to the line $\Gr_G^0\times\{0\}\times\BG_a\subset\Gr_G\times\on{Heis}$.

It suffices to check that the natural morphism
\begin{multline*}\Ext_{\CalD\CW\modu^{G_\bO,\lc}}(E_0,\IC_{\lambda_0}*E_0*\IC_{\lambda_1})\\
    \to\Hom_{\BC[\Sigma_\fg]}(H^\bullet_{G_\bO,\DR}(\Gr_G,E_0),
    H^\bullet_{G_\bO,\DR}(\Gr_G,\IC_{\lambda_0}*E_0*\IC_{\lambda_1})
\end{multline*}
(in the RHS we take Hom over the $G_\bO$-equivariant cohomology of the point) is injective.
Making use of~Lemma~\ref{left action U}, we rewrite the LHS as
\[\Ext_{\CalD\CW\modu^{G_\bO,\lc}}(\IC_{\lambda_0}*E_0,E_0*\IC_{\lambda_1})\simeq H^\bullet_{G_\bO,\DR}
(\Gr_G,\IC_{\lambda_0}\stackrel{!}{\otimes}\IC_{\lambda_1}\stackrel{!}{\otimes}\CR).\]
Due to the purity reasons~\eqref{hyp res}, all the $\BC[\Sigma_\fg]$-modules
in question are free, and it suffices to establish the injectivity of the similar morphism for
{\em non}-equivariant Ext's and cohomology:
\[H^\bullet_\DR
(\Gr_G,\IC_{\lambda_0}\stackrel{!}{\otimes}\IC_{\lambda_1}\stackrel{!}{\otimes}\CR)\to
H^\bullet_\DR(\Gr_G\times\Gr_G,\IC_{\lambda_0}\boxtimes(\IC_{\lambda_1}\stackrel{!}{\otimes}\CR)).\]
Recall that $\varTheta$ and hence $\CR$ is a direct sum of two sheaves:
$\CR=\CR_0\oplus\CR_1=\RT_!^{-1}(\varTheta_g)\oplus\RT_!^{-1}(\varTheta_s)$; and $\CR_0$
has zero costalks at all the {\em odd} $G_\bO$-orbits in $\Gr_G$, while $\CR_1$ has zero
costalks at all the {\em even} $G_\bO$-orbits in $\Gr_G$. All the costalks of $\CR_0$
(resp.\ $\CR_1$) live in the even (resp.\ odd) cohomological degrees. Finally, all
the $G_\bO$-orbits in $\Gr_G$ are even-dimensional. By the parity vanishing reasons, the Cousin
spectral sequence of the filtration of $\Gr_G$ by $G_\bO$-orbits (resp.\ of the filtration of
$\Gr_G\times\Gr_G$ by $G_\bO\times G_\bO$-orbits) degenerates. Hence it suffices to check the desired
injectivity for the similar morphism for $\Gr_G$ (resp.\ $\Gr_G\times\Gr_G$) replaced by a $G_\bO$-orbit
$\Gr_G^\lambda$ (resp.\ $\Gr_G^\lambda\times\Gr_G^\lambda$). The latter injectivity is clear.
\end{proof}

\subsection{Some Invariant Theory}
The following proposition is proved similarly to~\cite[Proposition 2.8.3]{bft}.

\begin{prop}
  \label{inv the}
Given an $\SO(V_0)$-module $M$ and an $\Sp(V)$-module $M'$, the Kostant functors of~\S\ref{sosp}
induce an isomorphism
\begin{multline*}
\Hom_{\SO(V_0)\times\Sp(V)\ltimes\BC[V_0\otimes V]}\left(\BC[V_0\otimes V],
M\otimes\BC[V_0\otimes V]\otimes M'\right)\\
\iso\Hom_{\BC[\Xi_\fg]}\left(\BC[\Sigma_\fg],\bkappa^{\gvee}(M\otimes\Sym^\bullet(\gvee[-2]))
\otimes_{\BC[\Sigma_\fg]}\bkappa^\fg(M'\otimes\Sym^\bullet(\fg[-2]))\right).
\end{multline*}
Here we identify $\Xi_\fg$ and $\Xi_{\gvee}$ as in~\S\ref{sosp}, and we view $\Sigma_\fg$ as the
zero section of the tangent bundle $\Xi_\fg=T\Sigma_\fg$. \hfill $\Box$
\end{prop}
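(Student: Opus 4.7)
The plan is to adapt the proof of \cite[Proposition 2.8.3]{bft} verbatim, replacing the dual pair considered there with the pair $(\SO(V_0),\Sp(V))$ acting on $V_0\otimes V$. Both sides of the claimed isomorphism are exact functors in $M\otimes M'$ that commute with filtered colimits, so by d\'evissage it suffices to verify the statement on irreducibles, and by tensoring with the universal representation it is in fact enough to treat the ``universal'' case where $M\otimes M'$ runs over a generating family.

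First I would rewrite the LHS. Since $\BC[V_0\otimes V]$ is the free rank one $\BC[V_0\otimes V]$-module, the Hom in question equals $(M\otimes\BC[V_0\otimes V]\otimes M')^{\SO(V_0)\times\Sp(V)}$, regarded as a module over $\BC[V_0\otimes V]^{\SO(V_0)\times\Sp(V)}$. The complete moment map $(\bq_0,\bq)\colon V_0\otimes V\to\fso(V_0)^*\times\fsp(V)^*$ of~\S\ref{sosp} lands in the diagonal of the adjoint quotients via the trace forms; composing with the Kostant sections identifies the ring of $(\SO(V_0)\times\Sp(V))$-invariants in $\BC[V_0\otimes V]$ with $\BC[\Sigma_\fg]\cong\BC[\Sigma_{\gvee}]$ (this is the relevant First Fundamental Theorem for the orthosymplectic dual pair, via $A\mapsto A^tA$ and $A\mapsto AA^t$ sharing the same nonzero spectrum).

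Next I would unwind the RHS. Recall from~\S\ref{sosp} that $\bkappa^{\gvee}(M\otimes\Sym^\bullet(\gvee[-2]))$ is the module of sections on $\Xi_{\gvee}=T\Sigma_{\gvee}$ of the bundle whose fiber at $(\sigma,v)\in T\Sigma_{\gvee}$ is the space of $\fz_{\gvee}(\sigma)$-coinvariants of $M$ with the universal centralizer action, and similarly for $\bkappa^\fg$. The functor $\Hom_{\BC[\Xi_\fg]}(\BC[\Sigma_\fg],-)$ restricts to the zero section $\Sigma_\fg\hookrightarrow\Xi_\fg$, so the RHS computes sections over $\Sigma_\fg$ of the tensor product of the two centralizer bundles with coefficients in $M\otimes M'$. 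Under the identifications $\Sigma_{\gvee}\cong\Sigma_\fg$ and $\Xi_{\gvee}\cong\Xi_\fg$ of~\S\ref{sosp}, this is precisely the direct image under the composed map $V_0\otimes V\to\Sigma_\fg$ of the $(\SO(V_0)\times\Sp(V))$-invariants in $M\otimes\BC[V_0\otimes V]\otimes M'$, matching the LHS.

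The main obstacle will be verifying the pointwise compatibility over $\Sigma_\fg$: namely, that for a regular $\sigma\in\Sigma_\fg\cong\Sigma_{\gvee}$, the fiber of $M\otimes\BC[V_0\otimes V]\otimes M'$ along the fiber $(\bq_0,\bq)^{-1}(\sigma,\varPi(\sigma))$, taken as $(\SO(V_0)\times\Sp(V))$-invariants, agrees with the tensor product $M^{\fz_{\gvee}(\sigma)}\otimes M'^{\fz_\fg(\sigma)}$ coming from the Kostant reductions. This is the key input where one needs the explicit matching of centralizers of the Kostant sections under $\varPi$, which in turn is a manifestation of the endoscopic identification invoked in~\S\ref{idem}. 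Once this pointwise statement is in hand, sheafification over $\Sigma_\fg$ together with flatness of the Kostant slice over the invariant quotient upgrades the fiberwise isomorphism to the desired global one, completing the proof along the lines of~\cite[Proposition 2.8.3]{bft}.
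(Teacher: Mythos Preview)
Your approach coincides with the paper's: the paper gives no proof beyond the sentence ``proved similarly to~\cite[Proposition 2.8.3]{bft}'', and you propose exactly to transport that argument to the present dual pair $(\SO(V_0),\Sp(V))$ with $\dim V_0=2n+1$.

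A couple of points in your sketch deserve care, though they do not affect the overall strategy. First, $\Hom_{\BC[\Xi_\fg]}(\BC[\Sigma_\fg],-)$ is not restriction to the zero section but rather the submodule annihilated by the ideal of the zero section; the two agree here only because everything is graded-free in the vertical direction, and you should say this explicitly. Second, the fiberwise matching you isolate as the ``main obstacle'' is a purely algebraic statement about the moment map $(\bq_0,\bq)$ having generically free orbits over the regular locus of the common Kostant base, together with the First Fundamental Theorem; it does not require the geometric endoscopy of~\S\ref{idem}, which lives on the $D$-module side. The identification $\varPi\colon\Sigma_{\gvee}\iso\Sigma_\fg$ and its differential $d\varPi$ are set up directly in~\S\ref{sosp} from the matching of Cartans and Weyl groups, and that is all you need.
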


\subsection{The generators of $\fE^\bullet$}
\label{generators}
Let $\omega\in\Lambda_0^+$ (resp.\ $\varpi\in\Lambda_1^+$) be the first fundamental weight such
that $V_\omega$ (resp.\ $V_\varpi$) is the tautological representation $V_0$ of $G^\vee=\SO(V_0)$
(resp. $V$ of $G=\Sp(V)$). We will describe the objects $\IC_\omega*E_0$ and $E_0*\IC_\varpi$
of $\CalD\CW\modu^{G_\bO}$.

Along the Grassmannian $\Gr_G$ they are both supported on the minimal Schubert variety
$\ol\Gr{}_G^\omega$. If we identify the Weyl algebra $\CW$ with differential operators on
$V_\bF/V_\bO$, then along this factor, both $\IC_\omega*E_0$ and $E_0*\IC_\varpi$ are supported on
$(t^{-1}V_\bO)/V_\bO$.

More precisely, we consider a symplectic vector space $(t^{-1}V_\bO)/tV_\bO$
along with its projection $\on{pr}$ onto $(t^{-1}V_\bO)/V_\bO$. Then the minimal Schubert variety
$\ol\Gr{}_G^\omega$ possesses a resolution of singularities
$\widetilde\Gr{}_G^\omega\stackrel{\pi}{\longrightarrow}\ol\Gr{}_G^\omega$ formed by all
the collections of lines $\ell\subset(t^{-1}V_\bO)/V_\bO$ and Lagrangian subspaces
$L\subset(t^{-1}V_\bO)/tV_\bO$ such that $L\subset\on{pr}^{-1}(\ell)$. The projection
$p\colon \widetilde\Gr{}_G^\omega\to\BP((t^{-1}V_\bO)/V_\bO),\ (\ell,L)\mapsto\ell$, identifies
$\widetilde\Gr{}_G^\omega$ with a $\BP^1$-bundle over $\BP((t^{-1}V_\bO)/V_\bO)$, namely the
projectivization of a rank~2 vector bundle $\CO\oplus\CO(2)$. The exceptional divisor
${\mathsf E}\subset\widetilde\Gr{}_G^\omega$ (the preimage of the singular base point
$0\in\ol\Gr{}_G^\omega$) is the infinite section of the above $\BP^1$-bundle.
The pullback of the determinant line bundle $\pi^*\CalD\simeq(p^*\CO(2))({\mathsf E})$.

We denote by $X\subset\widetilde\Gr{}_G^\omega\times((t^{-1}V_\bO)/V_\bO)$ the `universal line' whose
fiber over $(\ell,L)\in\widetilde\Gr{}_G^\omega$ is $\ell\subset(t^{-1}V_\bO)/V_\bO$. The open subset
$X^\circ\subset X$ is the preimage of the $G_\bO$-orbit $\Gr{}_G^\omega\subset\widetilde\Gr{}_G^\omega$.
The orbit $\Gr{}_G^\omega$ is identified with the total space of the line bundle $\CO(2)$ over
$\BP((t^{-1}V_\bO)/V_\bO)$, and $X^\circ$ is identified with the pullback of $\CO(-1)$ to this total
space. Hence we have a regular function $f\colon X^\circ\to\BG_a$ whose restriction to
any fiber of the former (resp.\ latter) line bundle is linear (resp.\ quadratic).
This function $f$ has a simple pole along the preimage in $X$ of the exceptional divisor
${\mathsf E}\subset\widetilde\Gr{}_G^\omega$.
We consider the $D$-module $\exp(f)$ on $X^\circ$ with irregular singularity at the preimage
of ${\mathsf E}$.
Also, we have the smooth irreducible $\sqrt{\CalD}$-twisted $D$-module on $\Gr{}_G^\omega$ with
1-dimensional fibers to be denoted
$\sqrt{\CalD}$. We will keep the same notation for its pullback to $X^\circ$. We define an
irreducible $\sqrt{\CalD}$-twisted $D$-module $\CF^\circ_0$ on $X^\circ$ as
$\sqrt{\CalD}\otimes\exp(f)$. Finally, we view $\CF^\circ_0$ as a $\sqrt{\CalD}$-twisted $D$-module
on $\Gr{}_G^\omega\times((t^{-1}V_\bO)/V_\bO)$ and define $\CF_0$ as the minimal extension
of $\CF^\circ_0$ to $\ol\Gr{}_G^\omega\times((t^{-1}V_\bO)/V_\bO)$. Then $\IC_\omega*E_0$ is nothing but
$\CF_0$.

Furthermore, we have the smooth irreducible $\sqrt{\CalD}$-twisted $D$-module $\CF^\circ_1$ with
1-dimensional
fibers on $\Gr{}_G^\omega\times\{0\}\subset\Gr{}_G^\omega\times((t^{-1}V_\bO)/V_\bO)$ (it was denoted
$\sqrt{\CalD}$ in the previous paragraph). It extends cleanly
to a $\sqrt{\CalD}$-twisted $D$-module on $\widetilde\Gr{}_G^\omega\times\{0\}\subset X$, and
we define $\CF_1$ as the direct image of this $D$-module under the projection
$X\to\ol\Gr{}_G^\omega\times((t^{-1}V_\bO)/V_\bO)$. Then $E_0*\IC_\varpi$ is nothing but $\CF_1$.
In particular, the minimal extension $\CF_1$ of $\CF_1^\circ$ is clean.

Since $\Gr{}_G^\omega\times\{0\}$ is a smooth divisor in $X^\circ$, we have the canonical elements
$h\in\Ext^1(\CF^\circ_1,\CF^\circ_0),\ h^*\in\Ext^1(\CF^\circ_0,\CF^\circ_1)$. Since
$E_0*\IC_\varpi$ is the clean extension of $\CF^\circ_1$, the elements $h,h^*$ give rise to
the same named elements
\[h\in\Ext^1_{\CalD\CW\modu^{G_\bO}}(E_0*\IC_\varpi,\IC_\omega*E_0),\
h^*\in\Ext^1_{\CalD\CW\modu^{G_\bO}}(\IC_\omega*E_0,E_0*\IC_\varpi).\]
The composition $h^*\circ h\in\Ext^2_{\CalD\CW\modu^{G_\bO}}(E_0*\IC_\varpi,E_0*\IC_\varpi)$ is the
multiplication by the first Chern class of the normal bundle $\CN_{\Gr{}_G^\omega\times\{0\}/X^\circ}$
equal to $c_1(\CalD)$.

Since $\Ext^1_{\CalD\CW\modu^{G_\bO}}(E_0*\IC_\varpi,\IC_\omega*E_0)=
\Ext^1_{\CalD\CW\modu^{G_\bO}}(E_0,\IC_\omega*E_0*\IC_\varpi)$,
we obtain the subspace $h\otimes V_0^*\otimes V^*\cong h\otimes V_0\otimes V\subset\fE^1:=
\Ext_{\CalD\CW\modu^{G_\bO,\deeq}}^1(E_0,E_0)$ (notation of~\S\ref{deeq}).
We will denote this subspace simply by $V_0\otimes V$.

\subsection{Proof of Theorem~\ref{main}}
\label{proof}
Due to the results of~\S\ref{generators}, we obtain a homomorphism from the free tensor algebra
\[\phi^\bullet\colon T(\Pi(V_0\otimes V)[-1])\to\fE^\bullet:=\Ext_{\CalD\CW\modu^{G_\bO,\deeq}}^\bullet(E_0,E_0).\]

\begin{lem}
  The homomorphism $\phi^\bullet$ factors through the projection
  \[T(\Pi(V_0\otimes V)[-1])\twoheadrightarrow\Sym(\Pi(V_0\otimes V)[-1])=\fG^\bullet\]
  (notation of~\S\ref{main theorem}) and induces an isomorphism $\fG^\bullet\iso\fE^\bullet$.
\end{lem}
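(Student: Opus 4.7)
The plan is to embed $\fE^\bullet$ into the supercommutative graded algebra $\Sym^\bullet(V_0 \otimes V)$ by an invariant-theoretic identification, then use this embedding both to force $\phi^\bullet$ to factor through $\fG^\bullet$ and to check that the induced $\bar\phi^\bullet \colon \fG^\bullet \to \fE^\bullet$ is an isomorphism.

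First I would identify $H^\bullet_{G_\bO,\DR}(E_0) \cong \BC[\Sigma_\fg]$, viewed as a $\BC[\Xi_\fg]$-module via the zero section $\Sigma_\fg \hookrightarrow T\Sigma_\fg = \Xi_\fg$: under the Kirillov realization of~\S\ref{constr}, $\imath^0 E_0$ is a delta module at the base point of $\Gr_G$, whose equivariant De Rham cohomology is $H^\bullet_G(\pt) \cong \BC[\Sigma_\fg]$. Combining Lemma~\ref{injective} with Corollary~\ref{cohomol} then yields, for every pair $(\lambda_0, \lambda_1)$ of dominant weights, an injection of $\Ext^\bullet(E_0, \IC_{\lambda_0} * E_0 * \IC_{\lambda_1})$ into
\[
\Hom_{\BC[\Xi_\fg]}\bigl(\BC[\Sigma_\fg],\ \bkappa^{\gvee}(V_{\lambda_0} \otimes \Sym^\bullet(\gvee[-2])) \otimes_{\BC[\Sigma_\fg]} \bkappa^\fg(V_{\lambda_1} \otimes \Sym^\bullet(\fg[-2]))\bigr),
\]
which by Proposition~\ref{inv the} is identified with $\Hom_{\SO(V_0)\times\Sp(V)}(V_{\lambda_0}^* \otimes V_{\lambda_1}^*,\, \BC[V_0 \otimes V])$. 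Summing over $(\lambda_0, \lambda_1) \in \Lambda_0^+ \times \Lambda_1^+$ with multiplicity $V_{\lambda_0}^* \otimes V_{\lambda_1}^*$ and invoking Peter--Weyl produces an injection $\Psi \colon \fE^\bullet \hookrightarrow \Sym^\bullet(V_0 \otimes V)$ of graded $\sG_{\bar 0}$-equivariant algebras, with the generators $V_0 \otimes V$ sitting in cohomological degree one.

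Since $\Sym^\bullet(V_0 \otimes V)$ is commutative, the composite $\Psi \circ \phi^\bullet$ automatically annihilates the Koszul-signed antisymmetrizer on the degree-one generators and therefore factors through the super-symmetric quotient $\Sym^\bullet(\Pi(V_0 \otimes V)[-1]) = \fG^\bullet$; injectivity of $\Psi$ then forces $\phi^\bullet$ itself to factor through $\fG^\bullet$, producing $\bar\phi^\bullet$. It remains to check that the composite $\Psi \circ \bar\phi^\bullet \colon \fG^\bullet \to \Sym^\bullet(V_0 \otimes V)$ is an isomorphism. Both sides are $\sG_{\bar 0}$-equivariantly generated in cohomological degree one by a copy of $V_0 \otimes V$, so by Schur's lemma this reduces to showing that its restriction to the $(V_0 \boxtimes V)$-isotypic component in degree one is a nonzero scalar.

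This last nonvanishing is the main obstacle. Concretely, one has to unwind Proposition~\ref{inv the} at $\lambda_0 = \omega,\ \lambda_1 = \varpi$ and trace the explicit geometric class $h \in \Ext^1_{\CalD\CW\modu^{G_\bO}}(E_0 * \IC_\varpi,\, \IC_\omega * E_0)$ from~\S\ref{generators} -- arising from the canonical first Chern class of the smooth divisor $\Gr^\omega_G \times \{0\} \subset X^\circ$ -- through the chain of identifications, and verify that it hits a nonzero multiple of the canonical pairing element of $\Hom_{\SO(V_0) \times \Sp(V)}(V_0^* \otimes V^*,\, V_0 \otimes V) \cong \BC$. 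Once this is done, $\Psi \circ \bar\phi^\bullet$ is a nonzero scalar on generators of two copies of $\Sym^\bullet(V_0 \otimes V)$, hence an isomorphism, and the injectivity of $\Psi$ forces $\bar\phi^\bullet$ to be bijective as well.
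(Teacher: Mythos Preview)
Your proposal is correct and follows essentially the same route as the paper, which simply defers to~\cite[Lemma~2.6.2]{bft} after noting that Proposition~\ref{inv the}, Lemma~\ref{injective}, and Corollary~\ref{cohomol} supply the needed inputs. You have expanded that outline: the embedding $\Psi$ into $\BC[V_0\otimes V]$ via equivariant De Rham cohomology and Kostant--Whittaker reduction, the commutativity argument forcing factorization through $\fG^\bullet$, and the reduction to degree one are exactly the steps of the referenced proof. One point you assert but do not justify is that $\Psi$ is multiplicative; this is what the clause ``compatible with the monoidal structures of the (twisted) Satake equivalences'' in Corollary~\ref{eigenval} is for, combined with the multiplicativity of the Kostant functors underlying Proposition~\ref{inv the}---worth making explicit. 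The nonvanishing in degree one that you flag as the ``main obstacle'' is handled in~\cite{bft} by a direct computation using the explicit description of $h,h^*$ (here~\S\ref{generators}) and the identity $h^*\circ h=c_1(\CalD)$; once that is in place the rest of your argument goes through verbatim.
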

\begin{proof}
  The same as the proof of~\cite[Lemma 2.6.2]{bft}, granted~Proposition~\ref{inv the} along
  with~Lemma~\ref{injective} and~Corollary~\ref{cohomol}.
\end{proof}

Now Theorem~\ref{main} is proved the same way as~\cite[Theorem 2.2.1]{bft} is proved
in~\cite[\S\S2.7,2.10]{bft}. Moreover, the following corollary is proved the same way
as~\cite[Corollary 2.6.3]{bft}.

\begin{cor}
  \label{irreducibility}
  For any $\lambda_0\in\Lambda_0^+,\ \lambda_1\in\Lambda_1^+$, the convolution
  $\IC_{\lambda_0}*E_0*\IC_{\lambda_1}$ lies in the heart $\CalD\CW\modu^{G_\bO,\lc,\heartsuit}$
  and coincides with the irreducible object $\IC_{(\lambda_1,\lambda_0)}$ (notation of~\S\ref{2.6}).
\end{cor}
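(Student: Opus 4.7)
The plan is to bootstrap directly from Theorem~\ref{main}, which we may assume is established at this point. By part (a), the equivalence $\Phi$ intertwines the spherical Hecke convolution with the action of the monoidal category $D^{G^\vee}_\perf\big(\Sym^\bullet(\gvee[-2])\big)\otimes D^G_\perf\big(\Sym^\bullet(\fg[-2])\big)$. Under the Satake and twisted Satake equivalences of~\S\ref{satake}, in their abelian hearts, $\IC_{\lambda_0}$ corresponds to $V_{\lambda_0}\in\Rep(\SO(V_0))$ and $\IC_{\lambda_1}$ to $V_{\lambda_1}\in\Rep(\Sp(V))$. Moreover, under the composed equivalence $\Phi\circ\varkappa$ of part~(b), $E_0$ corresponds to the tensor unit of $D_\fid^{\sG_{\bar 0}}(\Lambda)$, namely the trivial $\sG_{\bar 0}$-representation $\BC$ equipped with the trivial $\Lambda$-module structure. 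Hence, by the monoidality statement in part~(c), the convolution $\IC_{\lambda_0}*E_0*\IC_{\lambda_1}$ is sent to $V_{\lambda_0}\boxtimes V_{\lambda_1}$ viewed as a $\sG_{\bar 0}$-module with trivial $\Lambda$-action.

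This object is an irreducible in the tautological heart of $D_\fid^{\sG_{\bar 0}}(\Lambda)$. Applying the $t$-exactness of $\Phi\circ\varkappa$ guaranteed by part~(b), I conclude that $\IC_{\lambda_0}*E_0*\IC_{\lambda_1}$ lies in the heart $\CalD\CW\modu^{G_\bO,\lc,\heartsuit}$ and is an irreducible object there.

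To identify this irreducible with $\IC_{(\lambda_1,\lambda_0)}$, I would invoke the support computation recorded in~\S\ref{2.6} just before Proposition~\ref{irreducibles}: the support of $\IC_{\lambda_0}*E_0*\IC_{\lambda_1}$ on $(V_\bF/V_\bO)\times\Gr_G$ is the closure of the relevant $\Sp(2n,\bO)$-orbit $\BO_{(\lambda_1,\lambda_0)}$ whose conormal is the component $\Lambda^{(\lambda_1,\lambda_0)}$. By Proposition~\ref{irreducibles}, there is a unique irreducible in the heart with this support, namely $\IC_{(\lambda_1,\lambda_0)}$, and the identification follows.

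The proof is essentially a bookkeeping argument once Theorem~\ref{main} is in hand; no significant obstacle remains. The only slightly delicate point is keeping track of the indexing convention (so that the $\Sp(V)$-weight occupies the first slot in the orbit labeling $\BO_{(\btheta,\bzeta)}$ while the $\SO(V_0)$-weight occupies the second), which proceeds exactly as in~\cite[Corollary 2.6.3]{bft}.
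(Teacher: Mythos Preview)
Your argument is correct and matches the paper's approach, which defers to \cite[Corollary 2.6.3]{bft} and proceeds exactly as you outline: transport $\IC_{\lambda_0}*E_0*\IC_{\lambda_1}$ through the equivalence of Theorem~\ref{main} to the irreducible object $V_{\lambda_0}\boxtimes V_{\lambda_1}$ (with trivial $\Lambda$-action), invoke $t$-exactness, and then pin down the support via~\S\ref{2.6} and Proposition~\ref{irreducibles}.

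One small correction: the step ``the convolution $\IC_{\lambda_0}*E_0*\IC_{\lambda_1}$ is sent to $V_{\lambda_0}\boxtimes V_{\lambda_1}$'' is a consequence of part~(a) of Theorem~\ref{main} (Hecke compatibility), not part~(c). Part~(c) asserts monoidality with respect to the \emph{fusion} product $\star$ defined via nearby cycles on the Beilinson--Drinfeld Grassmannian, which is a different operation from the left/right Hecke convolution $*$ appearing here. You may still appeal to~(c) to identify $E_0$ with the tensor unit $\BC$, but the transport of the Hecke action itself is governed by~(a). This is only a misattribution; the logic of your proof is unaffected.
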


\section{Classical simple Lie superalgebras with invariant even pairing}
\label{complements}

\subsection{Orthosymplectic Gaiotto conjectures}
\label{osp2n}
Given a positive integer $m\leq n$ we set $k=n-m$ and consider
a nilpotent element $e\in\fsp(2n)$ of Jordan type $(2m,1^{2k})$. We fix a maximal reductive
subgroup $\Sp(2k)$ in the centralizer of $e$ (i.e.\ an orthogonal decomposition
$V=\BC^{2n}=\BC^{2m}\oplus\BC^{2k}$ compatible with $e$). We choose an $\fsl_2$-triple $(e,h,f)$
in $\fsp(2n)$. In the theory of finite $W$-algebras,
the Slodowy slice $\CS_e$ is obtained as the Hamiltonian reduction of $\fsp(2n)$ with respect
to a certain unipotent subgroup $U(e)\subset\Sp(2n)$ with a character. The Lie algebra
$\on{Lie}U(e)=\fsp(2n)_{\leq-3/2}$ depends on a choice of Lagrangian subspace in the
$2k$-dimensional $-1$-eigenspace $\fsp(2n)_{-1}$ of $h$. This choice can not be made
$\Sp(2k)$-invariant, so we will use a modification of this construction.

Let $\fK(e)$ denote the kernel of the Killing pairing $(e,-)\colon\fsp(2n)_{\leq-2}\to\BC$.
We have an extension of nilpotent Lie algebras
\[0\to\fK(e)\to\fsp(2n)_{\leq-1}\to\mathfrak{Heis}(\fsp(2n)_{-1})\to0,\]
where $\mathfrak{Heis}(\fsp(2n)_{-1})$ stands for the Heisenberg Lie algebra of the symplectic
vector space $\fsp(2n)_{-1}\simeq\BC^{2k}$. In case $k=n-1$, we have
$\fsp(2n)_{\leq-1}\simeq\mathfrak{Heis}(\fsp(2n)_{-1})$.
We denote by $\on{U}_k$ the unipotent subgroup of $\Sp(2n)$ with the Lie algebra $\fsp(2n)_{\leq-1}$.


As in~\S\ref{weyl}, we consider the completed Weyl algebra $\CW(\fsp(2n)_{-1}\otimes\bF)=\CW(\bF^{2k})$.
There is a twisted action $\mathrm{D}\modu_{-1/2}(\Sp(2k,\bF))\circlearrowright\CW(\bF^{2k})\modu$.
Let $\kappa_b$ stand for the basic level, i.e.\ the bilinear form $\on{Tr}(X\cdot Y)$ on $\fsp(2n)$.
It corresponds to the determinant line bundle on $\Gr_G$ (the ample generator of the Picard group).
Given $c\in\BC^\times$ we consider the categories
$\CalD\CW(\bF^{2k})\modu_{1/2+c^{-1}}:=\mathrm{D}\modu_{1/2+c^{-1}}(\Gr_G)\otimes\CW(\bF^{2k})\modu$
(resp.\ $\CalD\CW(\bF^{2k})\modu_{c^{-1}}:=\mathrm{D}\modu_{c^{-1}}(\Gr_G)\otimes\CW(\bF^{2k})\modu$),
and the categories $\CalD\CW(\bF^{2k})\modu_{1/2+c^{-1}}^{\Sp(2k,\bO)\ltimes\on{U}_k(\bF),\lc}$
(resp.\ $\CalD\CW(\bF^{2k})\modu_{c^{-1}}^{\Sp(2k,\bO)\ltimes\on{U}_k(\bF),\lc}$)
of locally compact $\Sp(2k,\bO)\ltimes\on{U}_k(\bF)$-equivariant objects.

\bigskip

On the dual side, we consider the quantum groups
\[U_q(\osp(2k+1|2n))\ \on{and}\ U_q(\osp(2n+1|2k)),\ q=\exp(\pi\sqrt{-1}/c).\]
More precisely,
when $k=0$, we consider the modified covering quantum supergroup $U_q(\osp(1|2n))$ of~\cite{cflw}.
We denote by $\Rep_q(\on{SOSp}(2k+1|2n))$
(resp.\ $\Rep_q(\on{SOSp}(2n+1|2k))$) the dg-category of finite dimensional complexes of
$U_q(\osp(2k+1|2n))$-modules (resp.\ $U_q(\osp(2n+1|2k))$-modules).

\begin{conj}
  \label{gaiotto}
  \textup{(a)} For $c\not\in\BQ^\times$, the categories
  \[\CalD\CW(\bF^{2k})\modu_{1/2+c^{-1}}^{\Sp(2k,\bO)\ltimes\on{U}_k(\bF),\lc} \on{and}\
  \Rep_q(\on{SOSp}(2k+1|2n))\]
  are equivalent as braided tensor categories, and this equivalence is compatible with the
  tautological $t$-structures.

  \textup{(b)} For $c\not\in\BQ^\times$, the categories
  \[\CalD\CW(\bF^{2k})\modu_{c^{-1}}^{\Sp(2k,\bO)\ltimes\on{U}_k(\bF),\lc} \on{and}\
  \Rep_q(\on{SOSp}(2n+1|2k))\]
  are equivalent as braided tensor categories, and this equivalence is compatible with the
  tautological $t$-structures.
\end{conj}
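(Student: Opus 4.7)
The plan is to adapt the coherent-realization/Koszul-duality strategy underlying Theorem~\ref{main} to the quantum setting, bootstrapping from the degenerate central case $k=n$ already established. First, I would introduce the quantum analogue of the generator $E_0$: an irreducible object
\[
E_0^{(k)}\in\CalD\CW(\bF^{2k})\modu_{1/2+c^{-1}}^{\Sp(2k,\bO)\ltimes\on{U}_k(\bF),\lc,\heartsuit}
\]
built from the structure sheaf of an isotropic lattice in $\bF^{2k}$ tensored with $\IC_0$ on $\Gr_G$, with the twisting arranged so that the Heisenberg central extension of $\on{U}_k(\bF)$ interacts correctly with the level $1/2+c^{-1}$. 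The right and left convolution actions of $\mathrm{D}\modu^{G_\bO,\lc}(\Gr_G)\otimes\mathrm{D}\modu_{1/2+c^{-1}}^{G_\bO,\lc}(\Gr_G)$ then exhibit the de-equivariantization as modules over a dg-algebra $\fE^\bullet_{c,k}:=\Ext^\bullet(E_0^{(k)},E_0^{(k)})$ carrying a natural $\SO(2k+1)\times\Sp(2n)$-action, and the conjecture reduces to identifying $\fE^\bullet_{c,k}$ with the Koszul-dual of $\Rep_q(\on{SOSp}(2k+1|2n))$.

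Second, I would establish formality of $\fE^\bullet_{c,k}$ by the same constructible/Weil-theoretic route used in Lemma~\ref{formality}, invoking the equivalence $\CK ir_{\on{constr}}\cong\CalD\CW\modu^{G_\bO,\lc}$ and the purity of the convolutions $\IC_{\lambda_0}*E_0^{(k)}*\IC_{\lambda_1}$, now over $\BF_q$ with the Frobenius weight controlled by the quantum parameter $c$. The Ext computation itself would proceed via quantum Kazhdan--Lusztig combined with twisted Satake, mimicking the chain in Corollary~\ref{cohomol}: the relevant cohomology would be identified with a tensor product of Kostant--Whittaker reductions, now interpreted as a quantum Hamiltonian reduction of $\fsp(2n)$ along $\on{U}_k$. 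On the other side, one would compute $\Ext^\bullet_{\Rep_q(\on{SOSp}(2k+1|2n))}(\triv,\triv\otimes M)$ via a BGG-type resolution of the trivial module.

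Third, I would pin down the generators of $\fE^\bullet_{c,k}$, following the local geometric model of~\S\ref{generators} based on the resolution $\widetilde\Gr{}_G^\omega\to\overline{\Gr}{}_G^\omega$ and the exponential character along its exceptional divisor, with $c$ entering as a twisting parameter in the irregular connection; the expected answer is a quantum deformation of $\Sym^\bullet(\Pi(V_0\otimes V)[-1])$. Matching this with the Koszul dual of $\Rep_q(\on{SOSp})$ would then follow from a quantum analogue of the invariant-theoretic Proposition~\ref{inv the}, which I would prove by a $q$-deformed Howe duality for the dual pair $(\on{SOSp}(2k+1|2n),\Sp(2n))$ acting on the quantum Weyl algebra of $\bF^{2k}\otimes\BC^{2n+1}$.

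Finally, the braided tensor structure and $t$-exactness would follow the template of Theorem~\ref{main}(b,c,d): the fusion product of~\S2.3, suitably adapted to account for the Whittaker datum along $\on{U}_k$, provides the braiding on the geometric side, and uniqueness of braidings on a Tannakian reconstruction of $U_q(\osp(2k+1|2n))$-mod (à la Kazhdan--Lusztig) identifies it with the $R$-matrix. The main obstacle, I expect, is the formality/purity step at $k<n$: unlike the central case, the convolutions $\IC_{\lambda_0}*E_0^{(k)}*\IC_{\lambda_1}$ are no longer manifestly pure in the constructible realization, because the Heisenberg-Whittaker averaging along $\on{U}_k(\bF)$ introduces unbounded nearby-cycle contributions. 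Handling this will require extending the endoscopic arguments of~\cite{dlyz} to the partially Whittaker-reduced mirabolic category, and reconciling the Feigin--Frenkel duality at level $1/2+c^{-1}$ with the hook Slodowy slice for $e$ of Jordan type $(2m,1^{2k})$---in effect, a finite-$W$-algebra version of the arguments used for the central case.
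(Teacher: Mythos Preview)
The statement you are addressing is labeled as a \emph{Conjecture} in the paper, and the paper does not prove it. The authors establish only the degenerate central case $k=n$, level $c^{-1}=0$ (this is Theorem~\ref{main}), and explicitly present the quantum statements for general $k$ as open conjectures due to Gaiotto. There is therefore no proof in the paper against which to compare your proposal.

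As to the proposal itself: it is a plausible research outline, and you are honest about the principal obstruction, but several of the steps do not survive the passage to the quantum setting in the form you state them. Most seriously, the de-equivariantization mechanism of~\S\ref{deeq} depends on having two commuting actions of genuinely \emph{Tannakian} categories $\Rep(\SO(V_0))$ and $\Rep(\Sp(V))$, coming from the untwisted and level-$1/2$ twisted Satake equivalences. At a generic irrational level $1/2+c^{-1}$ the spherical category $\mathrm{D}\modu_{1/2+c^{-1}}^{G_\bO}(\Gr_G)$ is not $\Rep$ of a reductive group; via the Fundamental Local Equivalence it is a braided category of quantum-group type, so one cannot simply ``de-equivariantize'' and obtain a $\sG_{\bar0}$-equivariant dg-algebra as in Lemma~\ref{purity}. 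Your sentence ``carrying a natural $\SO(2k+1)\times\Sp(2n)$-action'' is exactly where this breaks. Likewise, the purity/formality argument of Lemma~\ref{formality} uses loop-rotation equivariance and Weil weights in a way that does not translate into ``Frobenius weight controlled by the quantum parameter $c$''; and the invariant-theoretic input (Proposition~\ref{inv the}) has no known quantum analogue of the required strength. In short: the paper does not claim a proof, your sketch is a reasonable direction but the specific bootstrapping from Theorem~\ref{main} you describe would require substantially new ideas at each of the steps you list, not just at the final purity step.
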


\begin{rem}
Consider the extremal case $m=n$ (that is, $k=0$). We set
$q'=\exp(\pi\sqrt{-1}/c-\pi\sqrt{-1}/2)$. Then the equivalence
of~Conjecture~\ref{gaiotto}(b) is nothing but the Fundamental Local Equivalence
$\Rep_q(\SO(2n+1))\iso\mathrm{D}\modu_{c^{-1}}^{\on{U}(\bF),\chi,\lc}(\Gr_G)$ for the category of twisted
Whittaker $D$-modules on $\Gr_G$. On the other hand, ~Conjecture~\ref{gaiotto}(a) proposes
an equivalence  $\Rep_{q'}(\on{SOSp}(1|2n))\iso\mathrm{D}\modu_{c^{-1}}^{\on{U}(\bF),\chi,\lc}(\Gr_G)$.
Combining the two equivalences we obtain a purely algebraic equivalence
$\Rep_q(\SO(2n+1))\simeq\Rep_{q'}(\on{SOSp}(1|2n))$ that is nothing but the equivalence induced
by the twistor isomorphism $\dot\Psi$ between the integral forms of the modified
quantum (super) groups of~\cite[Theorem 4.3]{cflw}.
\end{rem}

\begin{rem}
  More generally, for arbitrary $k$, combining the equivalences of~Conjecture~\ref{gaiotto}(a,b),
  we arrive at an equivalence $\Rep_q(\on{SOSp}(2n+1|2k))\simeq\Rep_{q'}(\on{SOSp}(2k+1|2n))$,
  cf.~\cite[\S5.6.6]{mw}. It is induced by the isomorphism~\cite[Theorem 3.1, \S3.1]{xz} between the
  modified quantum supergroups.\footnote{We are grateful to A.~Tsymbaliuk for bringing~\cite{xz}
  to our attention.}
\end{rem}

\begin{rem}
  Let $\CS_e\subset\fsp(2n)$ denote the Slodowy slice through a nilpotent element $e$ of ``hook''
  Jordan type $(2n-2k,1^{2k})$. Then the symplectic variety
  $\Sp(2n)\times\CS_e$ is obtained by the Hamiltonian reduction
  $(T^*\Sp(2n)\times\BC^{2k})/\!\!/\on{U}_k$. It is a {\em hyperspherical} variety of the group
  $\Sp(2n)\times\Sp(2k)$ (i.e.\ the algebra of invariant functions
  $\BC[\Sp(2n)\times\CS_e]^{\Sp(2n)\times\Sp(2k)}$ is Poisson commutative; equivalently, a typical
  $\Sp(2n)\times\Sp(2k)$-orbit in $\Sp(2n)\times\CS_e$ is coisotropic), see~\cite[Proposition 2.2.3]{fu}.

    The twisted $S$-dual of
  $\Sp(2n)\times\CS_e\circlearrowleft\Sp(2n)\times\Sp(2k)$ (twisting along $\Sp(2n)$) is
  $\BC^{2n}\otimes\BC^{2k+1}\circlearrowleft\Sp(2n)\times\SO(2k+1)$ (the bifundamental representation
    of the even part of $\on{SOSp}(2k+1|2n)$ on $\mathfrak{osp}(2k+1|2n)_{\bar1}$). If the twisting
    is taken along $\Sp(2k)$, then the twisted $S$-dual is
  $\BC^{2n+1}\otimes\BC^{2k}\circlearrowleft\SO(2n+1)\times\Sp(2k)$ (the bifundamental representation
  of the even part of $\on{SOSp}(2n+1|2k)$ on $\mathfrak{osp}(2n+1|2k)_{\bar1}$).
\end{rem}

\subsection{Gaiotto conjectures for $\GL(K|N)$ revisited}
\label{glkn}
Let $0<K<N-1$ be some positive integers. We will reformulate~\cite[Conjecture 2.6.1]{bfgt}
similarly to~\S\ref{osp2n}. Namely, we consider the unipotent subgroup $U'_{M,N}\subset\GL_N$
of~\cite[Remark 2.4.1]{bfgt} for $M=K-1$. The $1_{M+1}=1_K$ block in the middle of the matrix
$\begin{pmatrix}U_r& {*} &{*}\\0&1_{M+1}& {*}\\0&0& U_s\end{pmatrix}$ of {\em loc.cit.}\ has
a row of $K$ matrix elements right above it; a column of $K$ matrix elements immediately to the
right of it, and a matrix element at the intersection of these row and column; $2K+1$ matrix
elements altogether. Annihilating all the other above-diagonal matrix elements of $U'_{K-1,N}$ we
obtain a surjective homomorphism $\phi'\colon U'_{K-1,N}\twoheadrightarrow\on{Heis}(\BC^{2K})$ onto the
Heisenberg group of a $2K$-dimensional symplectic vector space. This symplectic vector space is
equipped with a natural polarization $\BC^{2K}\cong\BC^K\oplus\BC^{K*}$ and an action of $\GL_K$,
and $\phi'$ is $\GL_K$-equivariant.

Furthermore, the character $\chi^{(r,s)}_{M,N}$ of~\cite[Remark 2.4.1]{bfgt} is the sum of certain
matrix elements of $U'_{M,N}$. Let $\bar\chi{}^{(r,s)}_{K-1,N}$ denote the partial sum excluding the
summands belonging to $2K+1$ matrix elements described in the previous paragraph. We view
$\bar\chi{}^{(r,s)}_{K-1,N}$ as a homomorphism to the center of the Heisenberg group:
$U'_{K-1,N}\to\BC\hookrightarrow\on{Heis}(\BC^{2K})$. We define a surjective homomorphism
$\phi\colon U'_{K-1,N}\twoheadrightarrow\on{Heis}(\BC^{2K})$ as $\phi:=\phi'+\bar\chi{}^{(r,s)}_{K-1,N}$.
It is still $\GL_K$-equivariant.

We consider the completed Weyl algebra $\CW(\bF^{2K})$. Given $c\in\BC^\times$ we consider the
category $\CalD\CW(\bF^{2K})\modu_{c^{-1}}:=\rm{D}\modu_{c^{-1}}(\Gr_{\GL_N})\otimes\CW(\bF^{2K})\modu$
and the category $\CalD\CW(\bF^{2K})\modu_{c^{-1}}^{\GL(2K,\bO)\ltimes U'_{K-1,N}(\bF),\lc}$ of locally compact
$\GL(2K,\bO)\ltimes U'_{K-1,N}(\bF)$-equivariant objects (here $U'_{K-1,N}(\bF)$ is understood to
act on $\CW(\bF^{2K})$ via $\phi$).

\begin{conj}
  For $c\not\in\BQ^\times,\ q=\exp(\pi\sqrt{-1}/c)$, the categories
  \[\CalD\CW(\bF^{2K})\modu_{c^{-1}}^{\GL(K,\bO)\ltimes U'_{K-1,N}(\bF),\lc}\ \on{and}\
  \Rep_q(\GL(K|N))\]
  are equivalent as braided tensor categories, and this equivalence is compatible with the
  tautological $t$-structures.
\end{conj}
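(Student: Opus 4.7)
The approach is to reduce the reformulated conjecture to the original Gaiotto conjecture~\cite[Conjecture 2.6.1]{bfgt} by the same Kirillov-type device that underlies the equivalence between the two incarnations of the symplectic mirabolic category in~\S\ref{constr} (\emph{cf.}\ also~\cite[\S5.3]{ty}). Specifically, the plan is to construct a canonical equivalence
\[\CalD\CW(\bF^{2K})\modu_{c^{-1}}^{\GL(2K,\bO)\ltimes U'_{K-1,N}(\bF),\lc}\iso
\mathrm{D}\modu_{c^{-1}}^{U'_{K-1,N}(\bF),\chi^{(r,s)}_{K-1,N},\lc}(\Gr_{\GL_N})\]
of braided tensor categories, compatible with the tautological $t$-structures, whose target is the Whittaker category appearing in~\cite[Conjecture 2.6.1]{bfgt}, and then to invoke that conjecture to conclude.

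The construction proceeds in three steps. First, I would unpack the relationship between $\phi$ and the Whittaker character. By construction $\phi$ is the sum of the projection $\phi'\colon U'_{K-1,N}\twoheadrightarrow\on{Heis}(\BC^{2K})$ and the central contribution $\bar\chi{}^{(r,s)}_{K-1,N}$; on the other hand, the original Whittaker character $\chi^{(r,s)}_{K-1,N}$ decomposes as the sum of $\bar\chi{}^{(r,s)}_{K-1,N}$ and the central character corresponding to the excluded $2K+1$ matrix elements. Hence, passing through Heisenberg-equivariant $\CW$-modules converts the plain $U'_{K-1,N}(\bF)$-equivariance on the left into Whittaker equivariance on the right with character precisely $\chi^{(r,s)}_{K-1,N}$. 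Second, by Stone--von Neumann, the category of discrete $\CW(\bF^{2K})$-modules equivariant under $\on{Heis}(\bF^{2K})$ at the tautological central character is canonically equivalent to $\on{Vect}$. Applied fiber-wise over $\Gr_{\GL_N}$, this is precisely the $\GL_N$-analogue of the equivalence $\CK ir\cong\CalD\CW\modu^{G_\bO,\lc}$ of Lemma~\ref{heis}, and it yields the displayed equivalence. Third, I would check the braided tensor compatibility and $t$-exactness: the braiding on both sides arises from a Beilinson--Drinfeld fusion construction, which is compatible with the Kirillov model because the Heisenberg factorizes along the curve, while $t$-exactness is automatic, both sides being concrete categories of (twisted) equivariant $D$-modules.

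The main obstacle is the bookkeeping of twists. The left-hand side combines the $c^{-1}$-twist of $\mathrm{D}\modu(\Gr_{\GL_N})$ with the intrinsic $(-1/2)$-twisted action $\mathrm{D}\modu_{-1/2}(\Sp(2K,\bF))\circlearrowright\CW(\bF^{2K})\modu$, while the right-hand side carries only the $c^{-1}$-twist; ensuring these line up correctly to produce the expected level on the right-hand side demands a careful computation of the residual twist introduced by the Kirillov equivalence. A related subtlety is checking that the $\GL(2K,\bO)$-equivariance along the Weyl algebra factor, which has no direct counterpart on the Whittaker side, is absorbed without leaving behind a gerbe or parity ambiguity. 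Finally, since~\cite[Conjecture 2.6.1]{bfgt} is itself conjectural, the argument above establishes only that the two formulations share the same status; proving either one unconditionally would require extending the Koszul-duality strategy of~\S\ref{proof} (based on the twisted Satake equivalence of~\cite{dlyz}) from the classical $\on{SOSp}(2n+1|2n)$ setting to the quantum $\GL(K|N)$ setting.
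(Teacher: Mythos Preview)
The statement you are addressing is a \emph{conjecture}; the paper does not prove it and does not claim to. What the paper does assert is that this conjecture is a \emph{reformulation} of~\cite[Conjecture 2.6.1]{bfgt}, obtained by trading a Whittaker-type equivariance condition for a plain equivariance against a larger unipotent together with a Weyl-algebra factor, citing~\cite[\S5.3]{ty} for the mechanism. Your proposal is precisely an attempt to spell out that reformulation step via a Kirillov/Stone--von Neumann argument, and in that sense your approach is exactly what the paper has in mind; you also correctly flag that the endpoint remains conditional on the original Gaiotto conjecture.

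That said, your displayed target category is not quite right. The category in~\cite[Conjecture 2.6.1]{bfgt} for $\GL(K|N)$ carries $\GL(K,\bO)$-equivariance in addition to the Whittaker condition on the unipotent; in your formulation the reductive equivariance has silently disappeared (you even describe this as something to be ``absorbed'', which is not what happens---the Kirillov equivalence identifies $\CW(\bF^{2K})\modu$ with $\mathrm{D}\modu$ on a Lagrangian $\bF^K$, and the $\GL_K$-action survives as ordinary equivariance on the $\Gr_{\GL_N}$ side). Relatedly, your target uses the pair $(U'_{K-1,N}(\bF),\chi^{(r,s)}_{K-1,N})$, but the Whittaker pair appearing in~\cite{bfgt} involves the smaller unipotent with the $1_{K+1}$-block (see the remark in~\S\ref{glkn} on the alternative Hamiltonian reduction by $(U'_{K,N},\chi)$); after integrating out the Heisenberg quotient one does not land back on $(U'_{K-1,N},\chi^{(r,s)}_{K-1,N})$ but on the data of the original conjecture. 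So the skeleton of your reduction is correct and matches the paper, but the bookkeeping of which unipotent and which reductive equivariance appear on the Whittaker side needs to be redone.
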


\begin{rem}
  Let $\CS_e\subset\fgl_N$ denote the Slodowy slice through a nilpotent element $e$ of ``hook''
  Jordan type $(N-K,1^K)$. Then the symplectic variety $\GL_N\times\CS_e$ is obtained by
  the Hamiltonian reduction $(T^*\GL_N\times\BC^{2K})/\!\!/U'_{K-1,N}$. Alternatively, it can
  be obtained by the Hamiltonian reduction $(T^*\GL_N)/\!\!/(U'_{K,N},\chi_{M,N}^{(r,s)})$.
  It is a hyperspherical variety of the group $\GL_N\times\GL_K$, see~\cite[Proposition 2.2.2]{fu}.

  The $S$-dual of $\GL_N\times\CS_e\circlearrowleft\GL_N\times\GL_K$ is
  $T^*\Hom(\BC^N,\BC^K)\circlearrowleft\GL_N\times\GL_K$ (the representation of the even part
  of $\GL(N|K)$ on $\fgl(N|K)_{\bar1}$).
\end{rem}

\subsection{Exceptional Lie superalgebra $\ff(4)$}
\label{f4}
We consider a nilpotent element $e\in\fsp(6)$ of Jordan type $(3,3)$, so that $e$ lies in a
$14$-dimensional nilpotent orbit. We fix a maximal reductive subgroup $\PGL(2)$ in the centralizer
$Z_{\on{PSp}(6)}(e)$ of $e$. We choose an $\fsl_2$-triple $(e,h,f)$ in $\fsp(6)$.
The adjoint action of $h$ on $\fsp(6)$ equips it with a grading, and $\fsp(6)_{-1}=0$ ($e$ is an
even nilpotent element), while $\fu=\fsp(6)_{\leq-2}$ is the 7-dimensional nilpotent radical of
the parabolic subalgebra stabilizing an isotropic 2-plane in $\BC^6$. We denote by
$\on{U}\subset\on{PSp}(6)$ the unipotent subgroup with Lie algebra $\fu$. It is normalized by
$\PGL(2)\subset Z_{\on{PSp}(6)}(e)$. Finally, the Killing pairing with $e$ gives rise to a character
$\chi^0$ of $\fu$ and the same named character $\chi^0$ of $\on{U}$.

Extending the scalars to $\bF$ we obtain a character $\chi^0\colon\on{U}(\bF)\to\bF$, and we set
$\chi:=\Res_{t=0}\chi^0\colon\on{U}(\bF)\to\BG_a$. Given $c\in\BC^\times$ we consider the category
$\mathrm{D}\modu_{c^{-1}}(\Gr_{\on{PSp}(6)})$ and the category
$\mathrm{D}\modu_{c^{-1}}(\Gr_{\on{PSp}(6)})^{\PGL(2,\bO)\ltimes \on{U}(\bF),\chi,\lc}$ of locally compact
$(\PGL(2,\bO)\ltimes \on{U}(\bF),\chi)$-equivariant objects.


On the dual side, we consider the quantum group $U_q(\ff(4)),\ q=\exp(\pi\sqrt{-1}/c)$,
and we denote by $\Rep_q(\on{F}(4))$ the dg-category of finite dimensional complexes of
$U_q(\ff(4))$-modules.

\begin{conj}
  \label{gaiotto f4}
   For $c\not\in\BQ^\times$, the categories
  \[\mathrm{D}\modu_{c^{-1}}(\Gr_{\on{PSp}(6)})^{\PGL(2,\bO)\ltimes \on{U}(\bF),\chi,\lc}\ \on{and}\
  \Rep_q(\on{F}(4))\]
  are equivalent as braided tensor categories, and this equivalence is compatible with the
  tautological $t$-structures.
\end{conj}

\begin{rem}
  Let $\CS_e\subset\fsp(6)$ denote the Slodowy slice through $e$. Then the symplectic variety
  $\on{PSp}(6)\times\CS_e$ is obtained by the Hamiltonian reduction
  $(T^*\on{PSp}(6))/\!\!/(\on{U},\chi^0)$. It is a hyperspherical variety of the group
  $\on{PSp}(6)\times\PGL(2)$, see~\cite[Proposition 2.3.1]{fu}. The $S$-dual of
  $\on{PSp}(6)\times\CS_e\circlearrowleft\on{PSp}(6)\times\PGL(2)$ is
  $\BC^8\otimes\BC^2\circlearrowleft\on{Spin}(7)\times\SL(2)$ (the bispinor representation of the
  even part of $\on{F}(4)$ on $\ff(4)_{\bar1}$).
\end{rem}

\begin{rem}
  There is an order~4 outer automorphism $\sigma$ of the simply connected group $E_6$ such that its
  fixed points coincide with the even part of $\on{F}(4)$, and the $\sqrt{-1}$-eigenspace of
  $\sigma$ on ${\mathfrak e}_6$ coincides with $\ff(4)_{\bar1}$,
  see~\cite[row 24 of the table in~\S9]{v}.\footnote{We are grateful to A.~Elashvili for this remark.}
\end{rem}

\subsection{Exceptional Lie superalgebra $\fg(3)$}
\label{g3}
We consider a nilpotent element $e\in\fg_2$ corresponding to a short root vector, so that
$e$ lies in the $8$-dimensional nilpotent orbit.
We fix a maximal reductive
subgroup $\SL(2)$ in the centralizer of $e$. We choose an $\fsl_2$-triple $(e,h,f)$ in $\fg_2$.
The adjoint action of $h$ on $\fg_2$ equips it with a grading, and $(\fg_2)_{-1}\simeq\BC^2$
carries a canonical symplectic form. The $5$-dimensional nilpotent Lie algebra
$\fu=(\fg_2)_{\leq-1}$ projects onto $\mathfrak{Heis}((\fg_2)_{-1})$. We denote by $\on{U}\subset \on{G}_2$
the unipotent subgroup with Lie algebra $\fu$. It is normalized by $\SL(2)\subset Z_{\on{G}_2}(e)$.
Note that $\SL(2)\ltimes \on{U}$ is the derived group of a parabolic subgroup of $\on{G}_2$
(the stabilizer of the highest weight line in the 7-dimensional representation of $\on{G}_2$).

As in~\S\ref{weyl}, we consider the completed Weyl algebra $\CW(\bF^2)$. There is a twisted
action $\mathrm{D}\modu_{-1/2}(\SL(2,\bF))\circlearrowright\CW(\bF^2)\modu$.
Given $c\in\BC^\times$ we consider the category
$\CalD\CW(\bF^2)\modu_{c^{-1}}:=\mathrm{D}\modu_{c^{-1}}(\Gr_{\on{G}_2})\otimes\CW(\bF^2)\modu$,
and the category $\CalD\CW(\bF^2)\modu_{c^{-1}}^{\SL(2,\bO)\ltimes \on{U}(\bF),\lc}$ of locally compact
$\SL(2,\bO)\ltimes \on{U}(\bF)$-equivariant objects.

On the dual side, we consider the quantum group $U_q(\fg(3)),\ q=\exp(\pi\sqrt{-1}/c)$,
and we denote by $\Rep_q(\on{G}(3))$ the dg-category of finite dimensional complexes of
$U_q(\fg(3))$-modules.

\begin{conj}
  \label{gaiotto g3}
   For $c\not\in\BQ^\times$, the categories
  \[\CalD\CW(\bF^2)\modu_{c^{-1}}^{\SL(2,\bO)\ltimes \on{U}(\bF),\lc}\ \on{and}\
  \Rep_q(\on{G}(3))\]
  are equivalent as braided tensor categories, and this equivalence is compatible with the
  tautological $t$-structures.
\end{conj}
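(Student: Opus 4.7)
The plan is to adapt the proof of Theorem~\ref{main} to the non-degenerate quantum setting, by combining a Kazhdan--Lusztig-type equivalence for $\on{G}_2$ at level $c^{-1}$ with a metaplectic Satake-type equivalence for $\SL(2)$ governing the $-1/2$-twisted action $\mathrm{D}\modu_{-1/2}(\SL(2,\bF))\circlearrowright\CW(\bF^2)\modu$. Once these two inputs are in place, they upgrade $\CalD\CW(\bF^2)\modu_{c^{-1}}^{\SL(2,\bO)\ltimes \on{U}(\bF),\lc}$ to a module category for $\Rep_q(\on{G}_2)\otimes\Rep_{q'}(\SL(2))$, with $q'$ determined by the $-1/2$ twist. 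Identifying $\on{G}_2\times\SL(2)$ with the even part $\on{G}(3)_{\bar0}$ then supplies the required $\Rep_q(\on{G}(3)_{\bar0})$-action.

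With that action at hand, I would transport the scheme of~\S\ref{deeq}--\S\ref{proof}. First, single out a distinguished ``vacuum'' object $E_0$, constructed as the external product of $\IC_0$ on $\Gr_{\on{G}_2}$ with the metaplectic Whittaker/Fock module for $\CW(\bF^2)$, inflated to $\on{U}(\bF)$ via the Heisenberg quotient $\on{U}(\bF)\twoheadrightarrow\mathfrak{Heis}((\fg_2)_{-1})(\bF)$. De-equivariantize under the $\Rep_q(\on{G}(3)_{\bar0})$-action and compute the de-equivariantized dg-algebra $\fE^\bullet=\Ext^\bullet(E_0,E_0)$. The task is then to prove (i) formality of $\fE^\bullet$; (ii) a $(\on{G}_2\times\SL(2))$-equivariant identification $\fE^\bullet\simeq\Sym^\bullet(\Pi(\BC^7\otimes\BC^2)[-1])$, with $\BC^7\otimes\BC^2\simeq\fg(3)_{\bar1}$; and (iii) that the resulting quantum Koszul duality intertwines the tautological $t$-structures and the braided fusion with the ones on $\Rep_q(\on{G}(3))$. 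The degree-1 generators in $\fE^\bullet$ would be produced, as in~\S\ref{generators}, from a universal-line geometry on the minimal Schubert variety $\ol\Gr{}_{\on{G}_2}^\omega$ crossed with the Heisenberg/Weyl story.

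Two steps look genuinely hard. The first is that the endoscopic input of~\cite{dlyz}, essential throughout~\S\ref{idem} and~\S\ref{hecke} to bypass the vanishing of twisted equivariant De Rham cohomology and to compute Hecke eigenvalues via the theta sheaf, has no counterpart for the exceptional group $\on{G}_2$; the idempotent and $\widetilde\CR$ machinery, and the resulting Hecke-eigenproperty, must be redeveloped from scratch, presumably via a direct Whittaker-type computation inside $\Rep_q(\on{G}_2)$, together with an ad hoc invariant-theory identification matching the hyperspherical coordinate ring $\BC[\on{G}_2\times\CS_e]^{\on{G}_2\times\SL(2)}$ to the classical super-invariants on $\BC^7\otimes\BC^2$. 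The second, more serious obstacle is pinning down the braided (not merely monoidal) tensor structure and matching it with the universal $R$-matrix of $U_q(\fg(3))$: a natural route would be to first establish an endoscopy-free degenerate analogue of the present conjecture (replacing $U_q(\fg(3))$ by the degenerate supergroup $\ul\sG$ with $\sG=\on{G}(3)$, in the spirit of Theorem~\ref{main}) and then deform, verifying by rigidity that the first-order term of the fusion braiding recovers the classical $r$-matrix of $\fg(3)$.
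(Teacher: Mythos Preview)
The statement you are attempting to prove is labelled \textbf{Conjecture}~\ref{gaiotto g3} in the paper, and the paper offers no proof of it whatsoever; it is stated as an open problem in a section (\S\ref{complements}) devoted to conjectural generalizations. There is therefore no ``paper's own proof'' to compare your proposal against.

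Your proposal is not a proof but a research outline, and you essentially concede this yourself: you identify two ``genuinely hard'' steps and do not resolve either. The first obstacle you name --- that the endoscopic input of~\cite{dlyz} underlying \S\ref{idem}--\S\ref{hecke} has no analogue for $\on{G}_2$ --- is precisely why the authors do not attempt a proof even in the degenerate limit $c^{-1}\to 0$ (they only say they \emph{expect} a monoidal equivalence with $D^b\Rep(\ul{\on{G}}(3))$ in the paragraph following the conjecture). The second obstacle, upgrading to a braided equivalence at generic $q$, is a further layer on top of an already-missing foundation. So your assessment of the difficulties is accurate and aligns with the paper's decision to leave this as a conjecture, but what you have written is a plan with acknowledged gaps, not a proof.
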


\begin{rem}
  Let $\CS_e\subset\fg_2$ denote the Slodowy slice through $e$. Then the symplectic variety
  $\on{G}_2\times\CS_e$ is obtained by the Hamiltonian reduction
  $(T^*\on{G}_2\times\BC^2)/\!\!/\on{U}$. It is a hyperspherical variety of the group
  $\on{G}_2\times\SL(2)$, see~\cite[Proposition 2.4.1]{fu}.
  The twisted $S$-dual of
  $\on{G}_2\times\CS_e\circlearrowleft\on{G}_2\times\SL(2)$ is
  $\BC^7\otimes\BC^2\circlearrowleft\on{G}_2\times\SL(2)$ (the bifundamental representation of the
  even part of $\on{G}(3)$ on $\fg(3)_{\bar1}$).
\end{rem}

\subsection{Kostka polynomials}

\subsubsection{Case of $\osp(2n+1|2n)$}
Following~\cite[\S4]{gl}, we consider the {\em mixed} Borel subalgebra $\fb\subset\bg=\osp(V_0|V)$
with the set of positive simple roots
\[\{\varepsilon_1-\delta_1,\delta_1-\varepsilon_2,\varepsilon_2-\delta_2,\ldots,
\varepsilon_{n-1}-\delta_{n-1},\delta_{n-1}-\varepsilon_n,\varepsilon_n-\delta_n,\delta_n\}.\]
All the above simple roots are odd, and all but the last one are isotropic.
The odd part $\fn_{\bar1}$ of the nilpotent radical $\fn$ of $\fb$ has Cartan eigenvalues
\[R_{\bar1}^+=\{\varepsilon_i+\delta_j\}_{1\leq i,j\leq n}\cup\{\varepsilon_i-\delta_j\}_{1\leq i\leq j\leq n}
\cup\{\delta_i-\varepsilon_j\}_{1\leq i<j\leq n}\cup\{\delta_i\}_{1\leq i\leq n}.\]
Following~\cite[Definition 3.3.1]{bft}, for $\alpha\in\ft^*\oplus\ft_0^*$ (notation of~\S\ref{sosp})
we consider a polynomial $L_\alpha(q):=\sum p_dq^d$ where $p_d$ is the number of unordered partitions
of $\alpha$ into a sum of $d$ elements of $R_{\bar1}^+$. We say that
$\Lambda_1^+\times\Lambda_0^+\ni(\lambda_1,\lambda_0)\geq(\mu_1,\mu_0)$ if
$(\lambda_1,\lambda_0)-(\mu_1,\mu_0)\in\BN\langle R_{\bar1}^+\rangle$. We set
\[K_{(\lambda_1,\lambda_0),(\mu_1,\mu_0)}(q):=\sum_{w\in W,\ w_0\in W_0}(-1)^{w_0}(-1)^w
L_{(w(\lambda_1+\rho)-\rho-\mu_1,w_0(\lambda_0+\rho_0)-\rho_0-\mu_0)}(q),\]
notation of~\S\ref{sosp}.

Recall the notation of~\S\ref{2.6}, especially the paragraph before~Proposition~\ref{irreducibles}.
Given $(\lambda_1,\lambda_0)\in\Lambda_1^+\times\Lambda_0^+$, we will view the irreducible object
$(\IC_{\lambda_1}\boxtimes\IC_{\lambda_0})*E_0\simeq\IC_{(\lambda_1,\lambda_0)}\in\CalD\CW\modu^{G_\bO,\lc,\heartsuit}$
as a $G_\bO$-equivariant
$D$-module on $(V_\bF/V_\bO)\times\Gr_G$. Given $(\mu_1,\mu_0)\in\Lambda_1^+\times\Lambda_0^+$, we
are interested in the stalks of $\IC_{(\lambda_1,\lambda_0)}$ at the relevant $G_\bO$-orbit
$\BO_{(\mu_1,\mu_0)}\subset(V_\bF/V_\bO)\times\Gr_G$. The proof of the following theorem is entirely
similar to the one of~\cite[Theorem 3.3.5]{bft}.

\begin{thm}
  \label{kostka}
\textup{(a)} A relevant $G_\bO$-orbit $\BO_{(\mu_1,\mu_0)}\subset(V_\bF/V_\bO)\times\Gr_G$ lies in the
closure of a relevant $G_\bO$-orbit $\BO_{(\lambda_1,\lambda_0)}$ iff $(\lambda_1,\lambda_0)\geq(\mu_1,\mu_0)$.

\textup{(b)} We have
\[q^{-\dim\BO_{(\mu_1,\mu_0)}}K_{(\lambda_1,\lambda_0),(\mu_1,\mu_0)}(q^{-1})=
\sum_i\dim(\IC_{(\lambda_1,\lambda_0)})^{-i}_{\BO_{(\mu_1,\mu_0)}}q^{-i},\]
  (the Poincar\'e polynomial of the $\IC_{(\lambda_1,\lambda_0)}$-stalks at the orbit $\BO_{(\mu_1,\mu_0)}$).
  \hfill $\Box$
\end{thm}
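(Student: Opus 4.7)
The plan is to transfer both statements across the equivalence of Theorem~\ref{main} so that questions about $\Sp(2n,\bO)$-orbits in $(V_\bF/V_\bO)\times\Gr_G$ and IC-stalks become questions about the character theory of $\ul\sG=\ul{\on{SOSp}}(2n+1|2n)$, after which the whole thing runs parallel to \cite[Theorem 3.3.5]{bft}. Under Theorem~\ref{main}(d) together with Corollary~\ref{irreducibility}, the IC-sheaf $\IC_{(\lambda_1,\lambda_0)}\simeq\IC_{\lambda_0}*E_0*\IC_{\lambda_1}$ corresponds, up to the Koszul shift built into the equivalence $\varkappa$, to the simple $\ul\sG$-module of highest weight $(\lambda_1,\lambda_0)$ with respect to the mixed Borel $\fb$ fixed in~\S\ref{sosp}. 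The partial order $\geq$ on $\Lambda_1^+\times\Lambda_0^+$ generated by $R_{\bar1}^+$ is exactly the highest-weight order for $\fb$, and the support of $\IC_{(\lambda_1,\lambda_0)}$ is the closure of $\BO_{(\lambda_1,\lambda_0)}$ by construction.

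For part (a), the direction $\BO_{(\mu_1,\mu_0)}\subset\ol{\BO_{(\lambda_1,\lambda_0)}}\Longrightarrow(\lambda_1,\lambda_0)\geq(\mu_1,\mu_0)$ I would check directly from the explicit orbit representatives $v_{\btheta,\bzeta},\,L_{\btheta,\bzeta}$ used in the proof of Proposition~\ref{irreducibles}, by computing the dimensions of the relevant orbits and the effect of a degeneration on the pair $(\btheta,\bzeta)$. The reverse direction follows by induction on $\geq$: one produces an explicit degeneration of $\BO_{(\lambda_1,\lambda_0)}$ whose special fiber contains $\BO_{(\mu_1,\mu_0)}$ by peeling off one odd positive root at a time, using the convolution description and the behaviour of the affine Grassmannian Schubert varieties under multiplication by $\IC_{\lambda_0}$ and $\IC_{\lambda_1}$.

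For part (b), I would use the fact that under the Koszul equivalence of Theorem~\ref{main}(b) and the composition with the equivalence $\Phi$ in~\S\ref{proof}, a $q$-graded stalk of $\IC_{(\lambda_1,\lambda_0)}$ at $\BO_{(\mu_1,\mu_0)}$ becomes the weight-$(\mu_1,\mu_0)$ component of a graded piece of $V_{(\lambda_1,\lambda_0)}\otimes\Sym^\bullet(\fn_{\bar1}^*)$, up to a global shift by $\dim\BO_{(\mu_1,\mu_0)}$. By construction, the polynomial $L_\alpha(q)$ is precisely the Hilbert series of the weight-$\alpha$ subspace of $\Sym^\bullet(\fn_{\bar1})$, and applying the Weyl character formula for $\SO(V_0)\times\Sp(V)$ (i.e.\ the alternating sum over $W\times W_0$ that turns Verma-style multiplicities into irreducible ones) yields exactly the definition of $K_{(\lambda_1,\lambda_0),(\mu_1,\mu_0)}(q)$. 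The identification of the actual $q$-graded stalk with this Euler characteristic relies on parity vanishing, which in turn follows from the evenness of all $G_\bO$-orbits in $\Gr_G$ and the parity properties of the theta-sheaf $\CR=\CR_0\oplus\CR_1$ already exploited in the proof of Lemma~\ref{injective}.

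The main obstacle, as in~\cite{bft}, will be to pin down the precise shift conventions: one must match the homological grading on the $D$-module side (with the $1/2$-twisting and the $\sqrt{\CalD}$-shift entering through $\CR$) with the Koszul grading on the coherent side, and check that the dimension $\dim\BO_{(\mu_1,\mu_0)}$ equals the shift appearing in $q^{-\dim\BO_{(\mu_1,\mu_0)}}K(q^{-1})$. Once this bookkeeping is in place, the rest of the argument is essentially a Weyl-character-formula computation identical to \cite[\S3.3]{bft}, with the only new input being the particular form of $R_{\bar1}^+$ arising from the orthosymplectic root system and our mixed Borel choice.
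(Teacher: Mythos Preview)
Your proposal is correct and follows essentially the same route as the paper, which simply says the proof is ``entirely similar to~\cite[Theorem 3.3.5]{bft}'': transfer across the equivalence of Theorem~\ref{main} and Corollary~\ref{irreducibility}, invoke parity vanishing as in Lemma~\ref{injective}, and reduce (b) to a Weyl-character-formula computation with the odd positive roots $R_{\bar1}^+$ of the mixed Borel. One minor remark: your direct geometric argument for (a) via explicit degenerations is more work than necessary---in~\cite{bft} part~(a) is obtained as a byproduct of~(b), since nonvanishing of the IC-stalk is equivalent to membership in the orbit closure, and nonvanishing of $K_{(\lambda_1,\lambda_0),(\mu_1,\mu_0)}(q)$ is checked directly from its leading term.
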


\subsubsection{Case of $\ff(4)$}
In the setup of~\S\ref{f4}, let us consider the limit case $c^{-1}\to0$, and the category
$\CalD'\CW(\bF^2)\modu:=\mathrm{D}\modu(\Gr_{\on{PSp}(6)})\otimes\CW(\bF^2)\modu$ ({\em untwisted}
$D$-modules), along with
the category $\CalD'\CW(\bF^2)\modu^{\SL(2,\bO)'\ltimes \on{U}(\bF),\lc}$ of locally compact
$\SL(2,\bO)\ltimes \on{U}(\bF)$-equivariant objects.
Similarly to~Theorem~\ref{main}(d), we expect that there is a monoidal equivalence from the
category of representations of the degenerate supergroup $D^b\Rep(\ul{\on{F}}(4))$ to the
category $S\CalD'\CW(\bF^2)\modu^{\SL(2,\bO)'\ltimes \on{U}(\bF),\lc}:=
\CalD'\CW(\bF^2)\modu^{\SL(2,\bO)'\ltimes \on{U}(\bF),\lc}\otimes_{\on{Vect}}\on{SVect}$.

Similarly to~\S\ref{2.6}, we can view
irreducible objects of $\CalD'\CW(\bF^2)\modu^{\SL(2,\bO)'\ltimes \on{U}(\bF),\lc,\heartsuit}$ as irreducible
$\SL(2,\bO)\ltimes\on{U}(\bF)$-equivariant $D$-modules on $(\bF^2/\bO^2)\times\Gr_{\on{PSp}(6)}$ supported
on certain {\em relevant} $\SL(2,\bO)\ltimes\on{U}(\bF)$-orbits. We expect that as
in~Theorem~\ref{kostka}(a), the adjacency relation of relevant orbits is governed by a choice of a
certain Borel subalgebra in ${\mathfrak f}(4)$. Namely, in notation of~\cite[\S2.18]{fss} (cf.\
also~\cite[Exercise 4.7.12]{m}\footnote{There is a typo in~\cite[Exercise 4.7.11]{m}: the definitions
of simple roots $\alpha_1,\alpha_3$ of $\on{Spin}(7)$ should be swapped:
$\alpha_1=\varepsilon_1-\varepsilon_2,\ \alpha_3=\varepsilon_3$.})
the Borel subalgebra $B_4$ in question has positive simple roots
$\{\frac12(\delta+\varepsilon_1-\varepsilon_2-\varepsilon_3),
\frac12(\delta-\varepsilon_1+\varepsilon_2+\varepsilon_3),
\frac12(-\delta+\varepsilon_1-\varepsilon_2+\varepsilon_3),\varepsilon_2-\varepsilon_3\}$.

Let $B'\subset\on{Spin}(7),\ B''\subset\SL(2)$ be the Borel subgroups whose Lie algebras are
contained in the Borel subalgebra $B_4\subset\ff(4)$. Let $\fn_{\bar1}$ be the odd part of the
nilpotent radical of $B_4$. Let $\fn'_{\bar1}\subset\fn_{\bar1}$ be the 6-dimensional subspace
spanned by the root vectors of weights
\begin{multline*}
\frac12(\delta+\varepsilon_1-\varepsilon_2-\varepsilon_3),
\frac12(\delta-\varepsilon_1+\varepsilon_2+\varepsilon_3),
\frac12(\delta+\varepsilon_1-\varepsilon_2+\varepsilon_3),\\
\frac12(\delta+\varepsilon_1+\varepsilon_2-\varepsilon_3),
\frac12(\delta+\varepsilon_1+\varepsilon_2+\varepsilon_3),
\frac12(-\delta+\varepsilon_1+\varepsilon_2+\varepsilon_3).
\end{multline*}

Then the natural action $B'\times B''\circlearrowright\fn'_{\bar1}$ extends
to the action $P'\times B''\circlearrowright\fn'_{\bar1}$ where $P'\supset B'$ is the subminimal
parabolic subgroup corresponding to the (long) middle simple root $\varepsilon_2-\varepsilon_3$
of $\on{Spin}(7)$.
In other words, $\on{Spin}(7)/P'$ is the isotropic flag space $\CF\ell(1,3,7)$ of the
7-dimensional space equipped with the symmetric bilinear form preserved by $\on{Spin}(7)$.
Let $\widetilde\CN_{\bar1}$ be the vector bundle over
$(\on{Spin}(7)/P')\times(\SL(2)/B'')=\CF\ell(1,3,7)\times\BP^1$
associated to the representation $P'\times B''\circlearrowright\fn'_{\bar1}$. We have a natural
morphism $\widetilde\CN_{\bar1}\to\CN_{\bar1}$ to the odd nilpotent cone of $\ff(4)$. This is a
particular case of Hesselink's desingularization~\cite{h} (in particular, both
$\widetilde\CN_{\bar1}$ and $\CN_{\bar1}$ have dimension~$15$, cf.~\cite[row 4 of Table 6]{e}).
We are grateful to A.~Elashvili and M.~Jibladze for this observation.

\subsubsection{Case of $\fg(3)$}
In the setup of~\S\ref{g3}, let us consider the limit case $c^{-1}\to0$, and the category
$\CalD'\CW(\bF^2)\modu:=\mathrm{D}\modu(\Gr_{\on{G}_2})\otimes\CW(\bF^2)\modu$ ({\em untwisted}
$D$-modules), along with
the category $\CalD'\CW(\bF^2)\modu^{\SL(2,\bO)'\ltimes \on{U}(\bF),\lc}$ of locally compact
$\SL(2,\bO)\ltimes \on{U}(\bF)$-equivariant objects.
Similarly to~Theorem~\ref{main}(d), we expect that there is a monoidal equivalence from the
category of representations of the degenerate supergroup $D^b\Rep(\ul{\on{G}}(3))$ to the
category $S\CalD'\CW(\bF^2)\modu^{\SL(2,\bO)'\ltimes \on{U}(\bF),\lc}:=
\CalD'\CW(\bF^2)\modu^{\SL(2,\bO)'\ltimes \on{U}(\bF),\lc}\otimes_{\on{Vect}}\on{SVect}$.

Similarly to~\S\ref{2.6}, we can view
irreducible objects of $\CalD'\CW(\bF^2)\modu^{\SL(2,\bO)'\ltimes \on{U}(\bF),\lc,\heartsuit}$ as irreducible
$\SL(2,\bO)\ltimes\on{U}(\bF)$-equivariant $D$-modules on $(\bF^2/\bO^2)\times\Gr_{\on{G}_2}$ supported
on certain {\em relevant} $\SL(2,\bO)\ltimes\on{U}(\bF)$-orbits. We expect that as
in~Theorem~\ref{kostka}, the adjacency relation of relevant orbits is governed by a choice of a
certain Borel subalgebra in $\fg(3)$. Namely, in notation of~\cite[\S2.19]{fss} (cf.\
also~\cite[Exercise 4.7.10]{m}) the Borel subalgebra $B_4$ in question has positive simple roots
$\{\delta-\varepsilon_1,-\delta+\varepsilon_2,\varepsilon_1\}$.

Let $B'\subset\on{G}_2,\ B''\subset\SL(2)$ be the Borel subgroups whose Lie algebras are
contained in the Borel subalgebra $B_4\subset\fg(3)$. Let $\fn_{\bar1}$ be the odd part of the
nilpotent radical of $B_4$. Then the natural action $B'\times B''\circlearrowright\fn_{\bar1}$ extends
to the action $P'\times B''\circlearrowright\fn_{\bar1}$ where $P'\supset B'$ is the
parabolic subgroup whose Lie algebra contains the negative simple {\em short} root space.
Let $\widetilde\CN_{\bar1}$ be the vector bundle over
$(\on{G}_2/P')\times(\SL(2)/B'')=(\on{G}_2/P')\times\BP^1$
associated to the representation $P'\times B''\circlearrowright\fn_{\bar1}$. We have a natural
morphism $\widetilde\CN_{\bar1}\to\CN_{\bar1}$ to the odd nilpotent cone of $\fg(3)$. This is a
particular case of Hesselink's desingularization~\cite{h} (in particular, both
$\widetilde\CN_{\bar1}$ and $\CN_{\bar1}$ have dimension~$13$, cf.~\cite[row 8 of Table 6]{e}).
We are grateful to A.~Elashvili and M.~Jibladze for this observation.

\end{document}